\theoremstyle{plain}
\newtheorem{theorem}{Theorem}[section]
\newtheorem{lemma}[theorem]{Lemma}
\newtheorem{proposition}[theorem]{Proposition}
\theoremstyle{definition}
\newtheorem{definition}[theorem]{Definition}
\newtheorem{properties}[theorem]{Properties}
\newtheorem{counter example}[theorem]{Counter Example}
\newtheorem{corollary}[theorem]{Corollary}
\newtheorem{example}[theorem]{Example}
\numberwithin{equation}{section}
\keywords{ $(\mathcal{Z}_c)_F$-filters, $(\mathcal{Z}_c)_F$-ideals, structure spaces, Baer-ring, $F_cP$-space, Zero divisor graph\\ * The Corresponding Author, Email: dmandal.cu@gmail.com}
\subjclass[2010]{Primary 54C30; Secondary 54C40}
\begin{document}
	
	\title[On the ring $C_c(X)_F$]{Rings of functions which are discontinuous on a finite set with countable range}

	

	\thanks {}
	\maketitle	
	\author{Achintya Singha$^1$, D. Mandal$^{2*}$, Samir Ch Mandal$^3$  and Sagarmoy Bag$^4$}\\
	
	\address{$^{1, 4}$ Department of Mathematics, Bangabasi Evening College, 19 Rajkumar Chakraborty Sarani, Kolkata 700009, West Bengal, India}\\
	\email{Email$^1$: achintya.puremath@gmail.com}\\
		\email{Email$^4$: sagarmoy.bag01@gmail.com}\\

	\address{$^{2,3}$ Department of Pure Mathematics, University of Calcutta, 35, Ballygunge Circular Road, Kolkata 700019, West Bengal, India}\\
	\email{Email$^2$ : dmandal.cu@gmail.com}\\
	\email{Email$^3$: samirchmandal@gmail.com}

	\begin{abstract}
		Consider the ring $C_c(X)_F$ of real valued functions which are discontinuous on a finite set with countable range. We discuss $(\mathcal{Z}_c)_F$-filters on $X$ and $(\mathcal{Z}_c)_F$-ideals of $C_c(X)_F$. We establish an analogous version of Gelfand-Kolmogoroff theorem in our setting. We prove some equivalent conditions when $C_c(X)_F$ is a Baer-ring and a regular ring. Lastly, we talk about the zero divisor graph on $C_c(X)_F$.
		
	\end{abstract}
	
	\section{Introduction}
	We start with a $T_1$ topological space $(X,\tau)$. Let $C_c(X)_F$ be the collection of all real valued functions on $X$ which are discontinuous on a finite set with countable range. Then $C_c(X)_F$ is a commutative ring with unity, where addition and multiplications are defined pointwise. We define for $f,g\in C_c(X)_F$, $f\leq g$ if and only if $f(x)\leq g(x)$ for all $x\in X$. Then $f\vee g=\frac{f+g+\vert f-g\vert}{2}\in C_c(X)_F$ and $f\wedge g=-(-f\vee -g)\in C_c(X)_F$. Thus $(C_c(X)_F, +, \cdot, \leq)$ is a lattice ordered ring. Clearly, $C_c(X)_F\subseteq C(X)_F$ ($\equiv$ rings of functions which are discontinuous on a finite set, studied briefly in \cite{MZ(2021), ZMA(2018)}). It is interesting to see that taking an infinite set $X$ with co-finite topology (or any irreducible topological space),  we get $C_c(X)_F=C(X)_F$. Let $C_c(X)$ be the ring of all real valued continuous functions with countable range. We see that the ring $C_c(X)_F$ properly contains the ring $C_c(X)$. Our intention in this paper is to study some ring properties of $C_c(X)_F$ and interpret certain topological behaviours of $X$ via $C_c(X)_F$ . 
	
	In Section 2, we define $\mathcal{F}_c$-completely separated subsets of $X$ and  establish that two subsets are $\mathcal{F}_c$-completely separated if and only if they are contained in two disjoint zero sets. In Section 3, we develop a connection between $(\mathcal{Z}_c)_F$-filters and ideals of $C_c(X)_F$. We define $(\mathcal{Z}_c)_F$-ideal and some equivalent conditions of $(\mathcal{Z}_c)_F$-prime ideals of $C_c(X)_F$ (see Theorem \ref{SMP3,3.7}) and arrive at a decision that $C_c(X)_F$ is a Gelfand ring. Also, we develop conditions when every ideal of $C_c(X)_F$ is fixed. We prove an ideal of $C_c(X)_F$ is an essential ideal if and only if it is free and also show that the set of all $(\mathcal{Z}_c)_F$-ideals and  the set of all $z^\circ$-ideals are identical (see Corollary \ref{SMP3,3.15}). Moreover we establish some results related to socle of $C_c(X)_F$. In the next section, we discuss structure spaces of $C_c(X)_F$ and prove that the set of all maximal ideals of $C_c(X)_F$ with hull-kernel topology is homeomorphic to the set of all $(\mathcal{Z}_c)_F$-ultrafilters on $X$ with Stone topology (see Theorem \ref{SMP3,4.3}) and also establish an analogue version of
	the Gelfand-Kolmogoroff theorem (see Theorem \ref{SMP3,4.4}). In Example \ref{SMP3,4.5}, we show that $\beta_\circ X$ and structure space of $C_c(X)_F$ are not  homeomorphic. In the next section, we establish that the  ring $C_c(X)_F$ properly contains the ring $C_c(X)$ and discuss some relations between $C_c(X)$ and $C_c(X)_F$.  In Section 6, we furnish some equivalent conditions when $C_c(X)_F$ is a Baer-ring. A space $X$ is called $F_cP$-space if $C_c(X)_F$ is regular and in Theorem \ref{SMP3,7.6}, some equivalent conditions of $F_cP$-space are proved. Finally, we introduce and study the main features of the zero divisor graph of $C_c(X)_F$ in Section 8.
	
	\section{Definitions and Preliminaries}
	
	For any $f\in C_c(X)_F$, $Z(f)=\{x\in X:f(x)=0\}$ is called zero set of the function $f$ and $Z[C_c(X)_F]$ aggregates of all zero sets in $X$. Then $Z[C_c(X)_F]=Z[C_c^*(X)_F]$, where $C_c^*(X)_F=\{f\in C_c(X)_F:f$ is bounded$\}$.
	
	Now we can easily check the following properties of zero sets:
	
	\begin{properties}
		Let  $f,g\in C_c(X)_F$ and for any $r\in\mathbb{R}$, $\underline{r}$ stands for constant function from $X$ to $\mathbb{R}$. Then
		\begin{itemize}
			\item[(i)] $Z(f)=Z(\vert f\vert)=Z(\vert f\vert\wedge\underline{1})=Z(f^n)$ (for all $n\in\mathbb{N})$.
			\item[(ii)] $Z(\underline{0})=X$ and $Z(\underline{1})=\emptyset$.
			\item[(iii)] $Z(f^2+g^2)=Z(f)\cap Z(g)=Z(\vert f\vert + \vert g\vert)$.
			\item[(iv)] $Z(f\cdot g)=Z(f)\cup Z(g)$.
			\item[(v)] $\{x\in X:f(x)\geq r\}$ and $\{x\in X:f(x)\leq r\}$ are zero sets in $X$.
		\end{itemize}
	\end{properties}
	
	Any two subsets $A$ and $B$ of a topological space $X$ are called completely separated [see 1.15,\cite{GJ}] if there exists a continuous function $f:X\rightarrow [0,1]$ such that $f(A)=\{0\}$ and $f(B)=\{1\}$. Analogously, we define $\mathcal{F}_c$-completely separated as follows:
	
	\begin{definition}
		Two subsets $A$ and $B$ of a topological space $X$ are said to be $\mathcal{F}_c$-completely separated in $X$ if there exists an element $f\in C_c(X)_F$ such that $f(A)=0$ and $f(B)=1$.
	\end{definition}
	
	\begin{theorem}\label{SMP3,2.3}
		Two subsets $A$ and $B$ of a topological space $X$ are  $\mathcal{F}_c$-completely separated if and only if they are contained in disjoint members of $Z[C_c(X)_F]$.
	\end{theorem}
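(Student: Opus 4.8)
The plan is to transcribe the classical argument for $C(X)$ (Gillman--Jerison, 1.15) into the present setting, using Property (v) for the forward implication and the standard quotient construction $g^2/(g^2+h^2)$ for the converse. Both halves are short; the only point requiring genuine care is the verification that the function built in the converse actually lies in $C_c(X)_F$.

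For the forward direction, suppose $A$ and $B$ are $\mathcal{F}_c$-completely separated, so there is some $f\in C_c(X)_F$ with $f(A)=0$ and $f(B)=1$. I would set
$Z_1=\{x\in X:f(x)\le \frac{1}{3}\}$ and $Z_2=\{x\in X:f(x)\ge \frac{2}{3}\}$.
By Property (v) both $Z_1$ and $Z_2$ are members of $Z[C_c(X)_F]$. Since $f(A)=0\le\frac13$ we have $A\subseteq Z_1$, since $f(B)=1\ge\frac23$ we have $B\subseteq Z_2$, and $Z_1\cap Z_2=\emptyset$ because no point can satisfy $f(x)\le\frac13$ and $f(x)\ge\frac23$ at once. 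This produces the required disjoint zero sets.

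For the converse, suppose $A\subseteq Z(g)$ and $B\subseteq Z(h)$ with $g,h\in C_c(X)_F$ and $Z(g)\cap Z(h)=\emptyset$. By Property (iii), $Z(g^2+h^2)=Z(g)\cap Z(h)=\emptyset$, so $u:=g^2+h^2$ is nowhere zero. I would then take $f=g^2/u$ and check that $f(A)=0$ (since $g$ vanishes on $Z(g)\supseteq A$) and $f(B)=1$ (since on $Z(h)\supseteq B$ we have $h=0$, whence $u=g^2$ and $f=g^2/g^2=1$). Note incidentally that $0\le f\le 1$ automatically, since $0\le g^2\le u$.

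The main obstacle is verifying that $f\in C_c(X)_F$, which reduces to showing that the nowhere-zero element $u$ is a unit of the ring, i.e.\ that $1/u\in C_c(X)_F$. Here I would argue directly from the two defining features of $C_c(X)_F$. First, $u$ has countable range and never vanishes, so $\{1/t:t\in u(X)\}$ is again countable and $1/u$ has countable range. Second, $u$ is continuous off a finite set and is nowhere zero, so $1/u$ is continuous at every point where $u$ is; hence the discontinuity set of $1/u$ is contained in the finite discontinuity set of $u$. Therefore $1/u\in C_c(X)_F$, and consequently $f=g^2\cdot(1/u)$ lies in $C_c(X)_F$ as a product of ring elements, which furnishes the desired $\mathcal{F}_c$-complete separation and completes the proof.
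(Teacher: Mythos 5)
Your proposal is correct and follows essentially the same route as the paper: the forward direction uses Property (v) to produce disjoint level-set zero sets (the paper uses the thresholds $\frac15$ and $\frac13$ rather than $\frac13$ and $\frac23$, which is immaterial), and the converse uses the same quotient $g^2/(g^2+h^2)$. Your extra verification that $1/(g^2+h^2)$ lies in $C_c(X)_F$ is a welcome detail the paper leaves implicit.
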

	
	\begin{proof}
		Let $A,B$ be two $\mathcal{F}_c$-completely separated subsets of $X$. Then there exists $f\in C_c(X)_F$ with $f:X\rightarrow [0,1]$ such that $f(A)=\{0\}$ and $f(B)=\{1\}$. Take $Z_1=\{x\in X:f(x)\leq \frac{1}{5}\}$ and $Z_2=\{x\in X:f(x)\geq \frac{1}{3}\}$. The $Z_1,Z_2$ are disjoint zero sets in $Z[C_c(X)_F]$ and $A\subseteq {Z_1}, B\subseteq {Z_2}$. Conversely, let $A\subseteq Z(f),B\subseteq Z(g)$, where $Z(f)\cap Z(g)=\phi, f,g\in C_c(X)_F$. Let $h=\frac{f^2}{f^2+g^2}:X\rightarrow [0,1]$. Now $Z(f)\cap Z(g)=Z(f^2+g^2)=\phi$. Then we have $h\in C_c(X)_F$. Also $h(A)=\{0\},$ $h(B)=\{1\}$. This shows that $A, B$ are $\mathcal{F}_c$-separated in $X$.
	\end{proof}
	
	\begin{corollary}
		Any two disjoint zero sets in $Z[C_c(X)_F]$ are $\mathcal{F}_c$-completely separated in $X$.
	\end{corollary}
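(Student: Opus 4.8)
The plan is to read this off directly from Theorem \ref{SMP3,2.3}, since it is exactly the sufficiency (``if'') direction applied to a special pair of subsets. Suppose $Z_1, Z_2 \in Z[C_c(X)_F]$ with $Z_1 \cap Z_2 = \emptyset$. The key observation is that every set is contained in itself, so taking $A = Z_1$ and $B = Z_2$ we have $A \subseteq Z_1$ and $B \subseteq Z_2$, where $Z_1, Z_2$ are disjoint members of $Z[C_c(X)_F]$. Thus $A$ and $B$ are contained in disjoint zero sets, and the converse direction of Theorem \ref{SMP3,2.3} immediately yields that $A$ and $B$ (that is, $Z_1$ and $Z_2$) are $\mathcal{F}_c$-completely separated in $X$.

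If one prefers a self-contained argument rather than a citation, I would simply reproduce the construction from the proof of Theorem \ref{SMP3,2.3}. Write $Z_1 = Z(f)$ and $Z_2 = Z(g)$ for some $f, g \in C_c(X)_F$, and set $h = \frac{f^2}{f^2 + g^2}$. Since $Z(f^2 + g^2) = Z(f) \cap Z(g) = Z_1 \cap Z_2 = \emptyset$ by Property (iii), the denominator never vanishes, so $h$ is a well-defined element of $C_c(X)_F$ with range in $[0,1]$. On $Z_1 = Z(f)$ the numerator vanishes so $h = 0$, and on $Z_2 = Z(g)$ we have $g = 0$ and $f \ne 0$, forcing $h = 1$. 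Hence $h(Z_1) = \{0\}$ and $h(Z_2) = \{1\}$, which is precisely the definition of $\mathcal{F}_c$-complete separation.

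There is essentially no obstacle here: the statement is a formal consequence of the theorem, the only point worth flagging being the trivial but essential fact that a set is a member of $Z[C_c(X)_F]$ that contains itself, so the disjoint zero sets serve as their own separating zero sets. I would therefore keep the proof to a single line invoking Theorem \ref{SMP3,2.3}, reserving the explicit $h = \frac{f^2}{f^2+g^2}$ construction only as a remark in case the reader wants the witnessing function made explicit.
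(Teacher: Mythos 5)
Your proposal is correct and is exactly the argument the paper intends: the corollary is stated without proof as an immediate consequence of Theorem \ref{SMP3,2.3}, obtained by taking each zero set as its own containing zero set. Your optional explicit construction of $h=\frac{f^2}{f^2+g^2}$ is just the relevant half of the theorem's own proof, so nothing new is needed.
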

	
	\begin{theorem}
		If two disjoint subsets $A$ and $B$ of $X$ are $\mathcal{F}_c$-completely separated, then there exists a finite subset $F$ of $X$ such that $A\setminus F$ and $B\setminus F$ are completely separated in $X\setminus F$.
	\end{theorem}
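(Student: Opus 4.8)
The plan is to read off the finite exceptional set directly from the witnessing function. Since $A$ and $B$ are $\mathcal{F}_c$-completely separated, there is an $f\in C_c(X)_F$ with $f(A)=\{0\}$ and $f(B)=\{1\}$. By the very definition of $C_c(X)_F$, the set $F$ of points at which $f$ fails to be continuous is finite, and this is precisely the finite set I take. Note that $f$ need not map into $[0,1]$, so a final normalization step will be needed; but $F$ is determined purely by the discontinuities of $f$.

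The heart of the argument is to show that the restriction $f|_{X\setminus F}$ is continuous as a map on the subspace $X\setminus F$. The point is that whole-space continuity of $f$ at a point $x\in X\setminus F$ forces subspace continuity of $f|_{X\setminus F}$ at $x$: given a neighbourhood $V$ of $f(x)$, choose a neighbourhood $U$ of $x$ in $X$ with $f(U)\subseteq V$; then $U\cap(X\setminus F)$ is a neighbourhood of $x$ in the subspace carried into $V$. Since $f$ is continuous at every point of $X\setminus F$, it follows that $f|_{X\setminus F}$ is continuous on all of $X\setminus F$. This implication (whole-space to subspace) is the one place requiring care, and it is exactly the direction that holds; the reverse direction would be false in general, which is why the discontinuity set rather than some larger set must be removed.

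Finally, to obtain a witness valued in $[0,1]$, I clamp: set $g=\bigl(f|_{X\setminus F}\vee\underline{0}\bigr)\wedge\underline{1}$, so that $g(x)=\min\{\max\{f(x),0\},1\}$. Being built from the continuous function $f|_{X\setminus F}$ by the lattice operations with constants, $g$ is a continuous map $X\setminus F\to[0,1]$. Since $f\equiv 0$ on $A$ we have $g\equiv 0$ on $A\setminus F$, and since $f\equiv 1$ on $B$ we have $g\equiv 1$ on $B\setminus F$. Hence $g$ witnesses that $A\setminus F$ and $B\setminus F$ are completely separated in $X\setminus F$, as required. (If either $A\setminus F$ or $B\setminus F$ is empty the required condition is vacuous on that side, so the same $g$ covers all cases.)
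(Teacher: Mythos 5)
Your proof is correct, and it takes a genuinely more direct route than the paper. The paper first invokes its Theorem 2.3 to replace the separating function by two disjoint zero sets $Z(f_1)\supseteq A$ and $Z(f_2)\supseteq B$, removes a finite set $F$ off which both $f_1$ and $f_2$ are continuous, and then appeals to Theorem 1.15 of Gillman--Jerison (disjoint zero sets are completely separated) to conclude. You instead work with the single witnessing function $f$ itself: you take $F$ to be its finite discontinuity set, observe correctly that whole-space continuity at each point of $X\setminus F$ passes to the subspace restriction, and then clamp via $(f\vee\underline{0})\wedge\underline{1}$ to land in $[0,1]$. What your approach buys is self-containedness and economy --- you need neither the zero-set characterization of $\mathcal{F}_c$-complete separation nor the Gillman--Jerison result, and your explicit clamping step actually repairs a small imprecision in the paper's setup (the definition of $\mathcal{F}_c$-completely separated does not require the witness to be $[0,1]$-valued, though the proof of Theorem 2.3 tacitly assumes it). What the paper's route buys is reuse of machinery it has already built and will use again. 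Both arguments are sound.
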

	
	\begin{proof}
		Let $A$ and $B$ be $\mathcal{F}_c$-completely separated in $X$. Then by Theorem \ref{SMP3,2.3}, there exist two zero sets $Z(f_1)$ and $Z(f_2)$ such that $A\subseteq Z(f_1)$ and $B\subseteq Z(f_2)$. Since $f_1, f_2\in C_c(X)_F$, there is a finite subset $F$ such that $f_1,f_2\in C(X\setminus F)$. Now $A\setminus F\subseteq Z(f_1)\setminus F$ and $B\setminus F\subseteq Z(f_2)\setminus F$. Also, $Z(f_1)\setminus F$ and $Z(f_2)\setminus F$ are disjoint zero sets in $X\setminus F$. Thus by Theorem 1.15 in \cite{GJ}, $A\setminus F$ and $B\setminus F$ completely separated in $X\setminus F$.
	\end{proof}
	
	We recall that $C_c(X)$ be the ring of all real valued continuous functions with countable range and $C_c^*(X)=\{f\in C_c(X): f$ is bounded$\}$. Then we have the following lemma.
	
	\begin{lemma}
		For a topological space $X$, the following statements are hold.
		\begin{itemize}
			\item [(i)] $C_c(X)_F$ is a reduce ring.
			\item [(ii)] An element $f\in C_c(X)_F$ is unit if and only if $Z(f)=\emptyset$.
			\item [(iii)] Any element of $C_c(X)_F$ is zero divisor or unit.
			\item [(iv)] $C_c(X)_F=C_c^*(X)_F$ if and only if for any finite subset $F$ of $X$, $C_c(X\setminus F)=C_c^*(X\setminus F)$.
		\end{itemize}
	\end{lemma}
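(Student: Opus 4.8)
The plan is to handle the four parts in sequence, exploiting throughout the fact that $C_c(X)_F$ is a subring of the pointwise ring $\mathbb{R}^X$ and that each of its members is governed by a \emph{finite} set of discontinuities together with a \emph{countable} range.

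For (i), I would note that $\mathbb{R}$ is reduced and that arbitrary products of reduced rings are reduced, so $\mathbb{R}^X$ is reduced and the subring $C_c(X)_F$ inherits the absence of nonzero nilpotents: if $f^n=\underline 0$ then $f(x)^n=0$ for every $x$, whence $f(x)=0$ and $f=\underline 0$. For (ii), if $f$ is a unit with inverse $g$ then $f(x)g(x)=1$ for all $x$, so $f(x)\neq 0$ and $Z(f)=\emptyset$; conversely, if $Z(f)=\emptyset$ I would set $g=1/f$ pointwise and verify $g\in C_c(X)_F$, the point being that the range of $g$ is the image of the countable range of $f$ under $t\mapsto 1/t$ (hence countable) and that on the cofinite set where $f$ is continuous and nonvanishing $g$ is continuous as well, so $g$ is discontinuous on at most a finite set. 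Then $fg=\underline 1$.

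For (iii), given $f$ with $Z(f)\neq\emptyset$ I would produce an explicit nonzero partner. Pick $x_0\in Z(f)$ and let $g$ be the indicator function of $\{x_0\}$. Its range $\{0,1\}$ is countable; because $X$ is $T_1$, the complement $X\setminus\{x_0\}$ is open, so $g$ vanishes on a neighbourhood of every point other than $x_0$ and is therefore continuous off $x_0$, i.e. discontinuous on the finite set $\{x_0\}$ at worst. Hence $g\in C_c(X)_F$ with $g\neq\underline 0$, while $fg=\underline 0$ since $f$ kills $x_0$ and $g$ kills all other points; thus $f$ is a zero-divisor, and combining with (ii) yields the dichotomy.

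For (iv), I would argue both implications by passing between a function and its restriction to a cofinite open set. The crucial structural observation, valid because $X$ is $T_1$, is that any finite $F$ has open complement, so the restriction of a function continuous off $F$ is genuinely continuous on the subspace $X\setminus F$, and conversely any $g\in C_c(X\setminus F)$ extends (by assigning arbitrary values on $F$) to a member of $C_c(X)_F$ whose range is enlarged by only finitely many points and whose boundedness matches that of $g$. For the forward implication, assuming $C_c(X)_F=C_c^*(X)_F$, a given $g\in C_c(X\setminus F)$ extends to some $\tilde g\in C_c(X)_F=C_c^*(X)_F$, which is bounded, so $g$ is bounded. For the reverse, assuming the hypothesis for all finite $F$, a given $f\in C_c(X)_F$ with discontinuity set inside a finite $F$ restricts to $C_c(X\setminus F)=C_c^*(X\setminus F)$ and is bounded there, while $f$ takes only finitely many further values on $F$, so $f$ is bounded. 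I expect (iv) to be the main obstacle, precisely in checking that restriction and extension preserve membership in the relevant function classes---keeping the range countable, the discontinuity set finite, and using $T_1$ to identify subspace continuity on $X\setminus F$ with continuity in $X$.
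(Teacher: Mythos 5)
Your proposal is correct and follows essentially the same route as the paper's proof in all four parts: reducedness inherited from $\mathbb{R}^X$, inversion of a nonvanishing $f$ by $1/f$, the characteristic function $\chi_{\{x_0\}}$ as an explicit annihilating partner, and the restriction/extension argument across a finite set for (iv). You are somewhat more careful than the paper in verifying the $T_1$ hypothesis and the countability of ranges, but the ideas are identical.
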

	
	\begin{proof}
		(i) It is trivial.
		
		(ii) Let $f\in C_c(X)_F$ be a unit. Then there exists $g\in C_c(X)_F$ such that $fg=\underline{1}$. Therefore $Z(f)=\emptyset$. Conversely, let $Z(f)=\emptyset$. Then $\frac{1}{f}\in C_c(X)_F$ is the inverse of $f$.
		
		(iii) Let $f\in C_c(X)_F$ and $Z(f)=\emptyset$. Then $f$ is a unit element. If $Z(f)\neq \emptyset$, then for $x\in Z(f)$, $\chi_{\{x\}}\in C_c(X)_F$ and $f\cdot\chi_{\{x\}}=0$ i.e., $f$ is a zero divisor.
		
		(iv) Suppose that $F$ is a finite subset of $X$ and $f\in C_c(X\setminus F)$. Now we define $g$ as
		\[  g(x)= \left\{
		\begin{array}{ll}
			0, & if~~ x\in F \\
			f(x), &  otherwise.\\
		\end{array}
		\right. \]
		Then $g\in C_c(X)_F=C_c^*(X)_F$ and $g\vert_{X\setminus F}=f$, hence $C_c(X\setminus F)=C_c^*(X\setminus F)$. Conversely, let $f\in C_c(X)_F$. Then there exists a finite subset $F$ of $X$ such that $f$ is continuous on $X\setminus F$. By hypothesis $f$ is bounded on $X\setminus F$. Therefore $f\in C_c^*(X)_F$.

	\end{proof}

	\section{$(\mathcal{Z}_c)_F$-filters and ideals of $C_c(X)_F$}
	
	Throughout the article, an ideal of $C_c(X)_F$ (or $C_c^*(X)_F$) always stands for a proper ideal.
	
	\begin{definition}
		A non-empty family $\mathcal{F}$ of subsets of $Z[C_c(X)_F]$ is called $(\mathcal{Z}_c)_F$-filter on $X$ if it satisfies the following three conditions:
		\begin{itemize}
			\item[(i)] $\phi\notin\mathcal{F}$.
			\item[(ii)] $Z_1,Z_2\in\mathcal{F}$ implies $Z_1\cap Z_2\in\mathcal{F}$.
			\item[(iii)] If $Z\in\mathcal{F}$ and $Z'\in Z[C_c(X)_F]$ such that $Z\subseteq Z'$, then $Z'\in\mathcal{F}$.
		\end{itemize}
	\end{definition}
	
	A $(\mathcal{Z}_c)_F$-filter on $X$ which is not properly contained in any $(\mathcal{Z}_c)_F$-filter on $X$ is called $(\mathcal{Z}_c)_F$-ultrafilter. A straight forward use of Zorn's lemma ensures that a $(\mathcal{Z}_c)_F$-filter on $X$ can be extended to a $(\mathcal{Z}_c)_F$-ultrafilter on $X$. There is an expected duality existing between ideals (maximal ideals) in $C_c(X)_F$ and the $(\mathcal{Z}_c)_F$-filters ($(\mathcal{Z}_c)_F$-ultrafilters) on $X$. This is realized by the following theorem.
	
	\begin{theorem}\label{SMP3,3.2}
		For the ring $C_c(X)_F$, the following statements are true.
		\begin{itemize}
			\item[(i)] If $I$ is an ideal of $C_c(X)_F$, then $Z[I]=\{Z(f):f\in I\}$ is a $(\mathcal{Z}_c)_F$-filter on $X$. Dually for any  $(\mathcal{Z}_c)_F$-filter $\mathcal{F}$ on $X$,  $Z^{-1}[\mathcal{F}]=\{f\in C_c(X)_F:Z(f)\in \mathcal{F}\}$ is an ideal (proper) in $C_c(X)_F$.
			\item[(ii)] If $M$ is a maximal ideal of $C_c(X)_F$ then $Z[M]$ is a $(\mathcal{Z}_c)_F$-ultrafilter on $X$. If $\mathcal{U}$ is a $(\mathcal{Z}_c)_F$-ultrafilter on $X$, then $Z^{-1}[\mathcal{U}]$ is a maximal ideal of $C_c(X)_F$. Furthermore the assignment: $M\mapsto Z[M]$ defines a bijection on the set of all maximal ideals in $C_c(X)_F$ and the aggregate of all $(\mathcal{Z}_c)_F$-ultrafilters on $X$.
		\end{itemize}
	\end{theorem}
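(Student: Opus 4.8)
The plan is to show that the two assignments $I\mapsto Z[I]$ and $\mathcal{F}\mapsto Z^{-1}[\mathcal{F}]$ are well defined and, at the maximal level, mutually inverse. I would first dispatch (i). To see that $Z[I]$ is a $(\mathcal{Z}_c)_F$-filter I verify the three axioms in turn. Axiom (i) holds because a function with empty zero set is a unit (by the characterization $Z(f)=\emptyset \iff f$ is a unit), while a proper ideal contains no unit, so $\emptyset\notin Z[I]$. Axiom (ii) follows from the zero-set identity $Z(f)\cap Z(g)=Z(f^2+g^2)$ together with $f^2+g^2\in I$. Axiom (iii) uses the product trick: if $Z(f)\in Z[I]$ with $f\in I$ and $Z(f)\subseteq Z(g)$ for some $g\in C_c(X)_F$, then $fg\in I$ and $Z(fg)=Z(f)\cup Z(g)=Z(g)$, so $Z(g)\in Z[I]$. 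For the dual statement, writing $J=Z^{-1}[\mathcal{F}]$, closure under subtraction follows from $Z(f)\cap Z(g)\subseteq Z(f-g)$ and the filter axioms, absorption of external multiplication from $Z(f)\subseteq Z(hf)$, and properness from $Z(\underline{1})=\emptyset\notin\mathcal{F}$.

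The next step is to record the two adjunction facts on which everything rests. The key identity is $Z[Z^{-1}[\mathcal{F}]]=\mathcal{F}$, valid for every $(\mathcal{Z}_c)_F$-filter: the inclusion $\subseteq$ is immediate, and every $Z\in\mathcal{F}$ is realized as $Z(f)$ for some $f\in Z^{-1}[\mathcal{F}]$, giving $\supseteq$. Alongside this I would note the one-sided inclusion $I\subseteq Z^{-1}[Z[I]]$, which holds for every ideal directly from the definitions. The point to flag is that this last inclusion is generally strict (the right-hand side is the $(\mathcal{Z}_c)_F$-saturation of $I$), so maximality will have to be invoked to collapse it to an equality.

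With these in hand, part (ii) proceeds as follows. If $M$ is maximal, then $Z[M]$ is a filter by (i); to see it is an ultrafilter, suppose a $(\mathcal{Z}_c)_F$-filter $\mathcal{F}\supseteq Z[M]$. Applying $Z^{-1}$ produces a proper ideal $Z^{-1}[\mathcal{F}]\supseteq Z^{-1}[Z[M]]\supseteq M$, so maximality of $M$ forces $Z^{-1}[\mathcal{F}]=M$, and applying $Z$ back and using the adjunction identity yields $\mathcal{F}=Z[M]$. Conversely, if $\mathcal{U}$ is an ultrafilter, then $Z^{-1}[\mathcal{U}]$ is a proper ideal; for any proper ideal $I\supseteq Z^{-1}[\mathcal{U}]$ we get $Z[I]\supseteq Z[Z^{-1}[\mathcal{U}]]=\mathcal{U}$, hence $Z[I]=\mathcal{U}$ by maximality of $\mathcal{U}$, whence $I\subseteq Z^{-1}[Z[I]]=Z^{-1}[\mathcal{U}]$ and equality holds, proving maximality of $Z^{-1}[\mathcal{U}]$.

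Finally, for the bijection, one composite is the identity by the adjunction identity $Z[Z^{-1}[\mathcal{U}]]=\mathcal{U}$, and the other is $Z^{-1}[Z[M]]=M$, which follows because $Z^{-1}[Z[M]]$ is a proper ideal containing the maximal ideal $M$. I expect the only genuinely delicate point to be this interplay between the two adjunction inclusions and maximality: the filter and ideal axioms are routine, but one must apply maximality at exactly the two places above to force the containments $I\subseteq Z^{-1}[Z[I]]$ and $M\subseteq Z^{-1}[Z[M]]$ to become equalities rather than remaining strict.
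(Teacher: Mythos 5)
Your proposal is correct and complete; every step (the three filter axioms via units, $Z(f^2+g^2)$, and the product trick; the ideal axioms via $Z(f)\cap Z(g)\subseteq Z(f-g)$ and $Z(f)\subseteq Z(hf)$; the identity $Z[Z^{-1}[\mathcal{F}]]=\mathcal{F}$ versus the one-sided inclusion $I\subseteq Z^{-1}[Z[I]]$, collapsed by maximality in exactly the two places you indicate) checks out. The paper in fact states this theorem with no proof at all, implicitly deferring to the classical Gillman--Jerison argument (2.3 and 2.5 in \cite{GJ}), and what you have written is precisely that standard argument transplanted to $C_c(X)_F$, so there is no divergence to report.
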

	
	Like the notion of $z$-ideal in $C(X)$ (see 2.7 in \cite{GJ}), we now define $(\mathcal{Z}_c)_F$-ideal in $C_c(X)_F$.
	\begin{definition}
		An ideal $I$ of $C_c(X)_F$ is called $(\mathcal{Z}_c)_F$-ideal if $Z^{-1}Z[I]=I$.
	\end{definition}
	
	It follows from the Theorem \ref{SMP3,3.2}(ii) that each maximal ideal of $C_c(X)_F$ is a $(\mathcal{Z}_c)_F$-ideal; the converse of this statement is false as is shown by the following example.
	
	\begin{example}
		Let $I=\{f\in C_c(X)_F:f(0)=f(1)=0\}$. Then $I$ is a $(\mathcal{Z}_c)_F$-ideal in $C_c(X)_F$, which is not even a prime ideal in the  ring $C_c(X)_F$.
	\end{example}

	The next theorem describes maximal ideals of $C_c(X)_F$.
	
	\begin{theorem}
		For any $f\in C_c(X)_F$, we have $M_f=\{g\in C_c(X)_F:Z(f)\subseteq Z(g)\}$.
	\end{theorem}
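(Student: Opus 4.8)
The plan is to show that the right-hand set
\[
S=\{g\in C_c(X)_F:Z(f)\subseteq Z(g)\}
\]
is exactly the smallest $(\mathcal{Z}_c)_F$-ideal containing $f$, i.e.\ $M_f=Z^{-1}Z[(f)]$, where $(f)$ is the principal ideal generated by $f$. I would carry this out in three steps: first that $S$ is an ideal with $f\in S$; second that $S$ is in fact a $(\mathcal{Z}_c)_F$-ideal; and third that every $(\mathcal{Z}_c)_F$-ideal containing $f$ already contains $S$. Taken together these identify $S$ with $M_f$.

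For the first step, closure under addition is immediate from the zero-set identities: if $Z(f)\subseteq Z(g_1)$ and $Z(f)\subseteq Z(g_2)$ then $Z(f)\subseteq Z(g_1)\cap Z(g_2)\subseteq Z(g_1+g_2)$, the last inclusion holding pointwise, so $g_1+g_2\in S$. Closure under multiplication by an arbitrary $h\in C_c(X)_F$ follows from Properties 2.1(iv), since $Z(gh)=Z(g)\cup Z(h)\supseteq Z(g)\supseteq Z(f)$. Hence $S$ is an ideal; it is proper precisely when $Z(f)\neq\emptyset$ (equivalently, $f$ is a non-unit), and trivially $f\in S$. For the second step I must check $Z^{-1}Z[S]=S$. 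The inclusion $S\subseteq Z^{-1}Z[S]$ is automatic. Conversely, if $g\in Z^{-1}Z[S]$ then $Z(g)=Z(g')$ for some $g'\in S$, whence $Z(f)\subseteq Z(g')=Z(g)$ and so $g\in S$; thus $S$ is a $(\mathcal{Z}_c)_F$-ideal.

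For the third step (minimality), let $I$ be any $(\mathcal{Z}_c)_F$-ideal with $f\in I$ and take $g\in S$. By Theorem \ref{SMP3,3.2}(i), $Z[I]$ is a $(\mathcal{Z}_c)_F$-filter containing $Z(f)$. Since $Z(f)\subseteq Z(g)$ and $Z(g)\in Z[C_c(X)_F]$, the upward-closure axiom in Definition 3.1(iii) forces $Z(g)\in Z[I]$, and therefore $g\in Z^{-1}Z[I]=I$ because $I$ is a $(\mathcal{Z}_c)_F$-ideal. Hence $S\subseteq I$, so $S$ is the smallest such ideal and $M_f=S$. The only point requiring any care is the verification that $S$ is genuinely a $(\mathcal{Z}_c)_F$-ideal rather than merely an ideal; everything else reduces to the zero-set identities of Properties 2.1 together with the filter axioms, so I do not anticipate a real obstacle.
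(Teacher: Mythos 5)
Your three steps are individually correct, and together they do establish that $S=\{g\in C_c(X)_F:Z(f)\subseteq Z(g)\}$ is the smallest $(\mathcal{Z}_c)_F$-ideal containing $f$. The difficulty is that this is not what the paper means by $M_f$. Although the symbol is never formally defined, the paper's own proof --- and the parallel convention later in Section~3, where $P_a$ is the intersection of all minimal prime ideals containing $a$ --- make it clear that $M_f$ denotes the intersection of all \emph{maximal} ideals of $C_c(X)_F$ containing $f$. Under that reading your argument yields only one inclusion: every maximal ideal containing $f$ is a $(\mathcal{Z}_c)_F$-ideal containing $f$, so your minimality step gives $S\subseteq M$ for each such $M$, hence $S\subseteq M_f$. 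The reverse inclusion $M_f\subseteq S$ --- the substantive half of the theorem --- is nowhere addressed; nothing in your three steps rules out that the intersection of the maximal ideals containing $f$ is strictly larger than the smallest $(\mathcal{Z}_c)_F$-ideal containing $f$.

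The missing piece is a separation argument, which in this ring is short because fixed maximal ideals abound: if $g\in M_f$ but $Z(f)\not\subseteq Z(g)$, pick $x\in Z(f)\setminus Z(g)$; then $M_x=\{h\in C_c(X)_F:h(x)=0\}$ is a maximal ideal (evaluation at $x$ maps $C_c(X)_F$ onto $\mathbb{R}$, using $\chi_{\{x\}}\in C_c(X)_F$) which contains $f$ but not $g$, contradicting $g\in M_f$. This is exactly the first half of the paper's proof; its second half is your step~3 specialized to maximal ideals. So your strategy and lemmas are sound and reusable, but as written the proposal proves a different statement and needs to be supplemented by the one-line argument above.
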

	\begin{proof}
		Let $g\in M_f$ and $x\in Z(f)\setminus Z(g)$. Now $M_x=\{f\in C_c(X)_F:x\in Z(f)\}$ is a maximal ideal of $C_c(X)_F$ contains $f$ but does not contain $g$, a contradiction. Thus $Z(f)\subseteq Z(g)$. For reverse part, let $M$ be a maximal ideal of $C_c(X)_F$ which contains $f$ and $Z(f)\subseteq Z(g)$ for some $g\in C_c(X)_F$. Then we have $Z(g)\in Z[M]$ and this implies that $g\in Z^{-1}Z[M]$. Since $M$ is a $(\mathcal{Z}_c)_F$-ideal, $g\in M$.
	\end{proof}
	
	\begin{corollary}\label{SMP3,3.6}
		An ideal $I$ of $C_c(X)_F$ is a $(\mathcal{Z}_c)_F$-ideal if and  only if whenever $Z(f)\subseteq Z(g)$, where $f\in I$ and $g\in C_c(X)_F$, then $g\in I$.
	\end{corollary}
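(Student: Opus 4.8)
The plan is to unwind the definition $Z^{-1}Z[I]=I$ and reduce the statement to a single inclusion. Observe first that for every ideal $I$ one automatically has $I\subseteq Z^{-1}Z[I]$: if $f\in I$ then $Z(f)\in Z[I]$, so $f\in Z^{-1}Z[I]$. Consequently $I$ is a $(\mathcal{Z}_c)_F$-ideal precisely when $Z^{-1}Z[I]\subseteq I$, that is, precisely when every $g\in C_c(X)_F$ whose zero set already occurs as $Z(f)$ for some $f\in I$ must itself lie in $I$. I would make this reduction explicit at the outset, since both implications then become short.

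For the direction assuming the stated condition, I would take $g\in Z^{-1}Z[I]$. Then $Z(g)\in Z[I]$, so $Z(g)=Z(f)$ for some $f\in I$; in particular $Z(f)\subseteq Z(g)$, and the hypothesis forces $g\in I$. This gives $Z^{-1}Z[I]\subseteq I$, hence equality, so $I$ is a $(\mathcal{Z}_c)_F$-ideal.

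For the converse, suppose $Z^{-1}Z[I]=I$ and let $f\in I$, $g\in C_c(X)_F$ with $Z(f)\subseteq Z(g)$. The crux---and essentially the only genuine step---is to exhibit a member of $I$ whose zero set equals $Z(g)$, so that $g\in Z^{-1}Z[I]=I$. Here the natural candidate is $fg$: since $I$ is an ideal and $g\in C_c(X)_F$, we have $fg\in I$, while property (iv) of zero sets gives $Z(fg)=Z(f)\cup Z(g)=Z(g)$, the last equality using $Z(f)\subseteq Z(g)$. Thus $Z(g)=Z(fg)\in Z[I]$, and so $g\in Z^{-1}Z[I]=I$, as required.

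I expect no serious obstacle: the argument is formal once the inclusion $I\subseteq Z^{-1}Z[I]$ is recorded, and the lone nontrivial observation is that multiplying $f$ by $g$ realizes $Z(g)$ inside $Z[I]$. The only points to double-check are that $fg$ indeed lies in $C_c(X)_F$ and in $I$, both of which are immediate from $I$ being an ideal of the ring $C_c(X)_F$, and that the constant appeal to property (iv) is legitimate, which it is since $f,g\in C_c(X)_F$.
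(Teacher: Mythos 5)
Your proof is correct, and it is exactly the standard argument the paper leaves implicit when it states this as a corollary: the reduction to the inclusion $Z^{-1}Z[I]\subseteq I$ and the observation that $fg\in I$ with $Z(fg)=Z(f)\cup Z(g)=Z(g)$ realizes $Z(g)$ in $Z[I]$. No gaps.
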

	
	The following two results are analogous to Theorem 2.9 and Theorem 2.11 respectively in \cite{GJ} and thus we state them without any proof.
	
	\begin{theorem}\label{SMP3,3.7}
		The following four statements are equivalent for a $(\mathcal{Z}_c)_F$-ideal $I$ in $C_c(X)_F:$
		
		\begin{itemize}
			\item[i)] $I$ is a prime ideal.
			
			\item[ii)] $I$ contains a prime ideal in $C_c(X)_F$.
			
			\item[iii)] For all $f,g\in C_c(X)_F, fg=0\Rightarrow f\in I$ or $g\in I$.
			
			\item[iv)] Given $f\in C_c(X)_F$, there exists $Z\in Z[I]$ on which $f$ does not change its sign.
		\end{itemize}
	\end{theorem}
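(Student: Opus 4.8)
The plan is to prove the equivalence by running the cycle $(i)\Rightarrow(ii)\Rightarrow(iii)\Rightarrow(iv)\Rightarrow(i)$, following the classical template for $z$-ideals (Theorem 2.9 in \cite{GJ}) but feeding in the lattice structure of $C_c(X)_F$ noted in the introduction together with the characterization of $(\mathcal{Z}_c)_F$-ideals in Corollary \ref{SMP3,3.6}. The first two links are immediate: $(i)\Rightarrow(ii)$ holds because $I$ is a prime ideal contained in itself, and $(ii)\Rightarrow(iii)$ holds because if $P\subseteq I$ is prime and $fg=\underline{0}\in P$, then primeness of $P$ forces $f\in P\subseteq I$ or $g\in P\subseteq I$.

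For $(iii)\Rightarrow(iv)$ I would use that $C_c(X)_F$ is lattice-ordered. Given $f\in C_c(X)_F$, set $p=f\vee\underline{0}$ and $q=(-f)\vee\underline{0}$; both lie in $C_c(X)_F$ and their supports are disjoint, so $pq=\underline{0}$. By $(iii)$, either $p\in I$ or $q\in I$. If $p\in I$, then $Z(p)=\{x:f(x)\le 0\}\in Z[I]$ is a member of $Z[I]$ on which $f$ does not change sign; if $q\in I$, then $Z(q)=\{x:f(x)\ge 0\}\in Z[I]$ serves the same purpose.

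The crux is $(iv)\Rightarrow(i)$. Assume $fg\in I$; I must produce $f\in I$ or $g\in I$. Applying $(iv)$ to $h=\vert f\vert-\vert g\vert\in C_c(X)_F$ yields some $Z\in Z[I]$ on which $h$ does not change sign; I may assume $\vert f\vert\ge\vert g\vert$ on $Z$, the reverse inequality being symmetric and producing $f\in I$ in place of $g$. Since $fg\in I$, the set $Z(fg)=Z(f)\cup Z(g)$ belongs to $Z[I]$, and so does $W=Z\cap Z(fg)$ by closure under intersection (Theorem \ref{SMP3,3.2}(i)). The inequality $\vert f\vert\ge\vert g\vert$ on $Z$ gives $Z\cap Z(f)\subseteq Z\cap Z(g)$, whence $W=Z\cap Z(g)\subseteq Z(g)$. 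As $W\in Z[I]$, we may write $W=Z(m)$ for some $m\in I$ with $Z(m)\subseteq Z(g)$; since $I$ is a $(\mathcal{Z}_c)_F$-ideal, Corollary \ref{SMP3,3.6} then delivers $g\in I$, so $I$ is prime.

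The main obstacle I anticipate is precisely this last implication: the bookkeeping that converts the sign condition on $h=\vert f\vert-\vert g\vert$ into the zero-set inclusion $W\subseteq Z(g)$. Once that inclusion is secured, the $(\mathcal{Z}_c)_F$-ideal hypothesis through Corollary \ref{SMP3,3.6} closes the argument with no further effort, and the remaining implications are essentially formal.
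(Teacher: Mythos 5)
Your proof is correct and is essentially the argument the paper intends: the paper explicitly omits the proof, citing Theorem 2.9 of \cite{GJ}, and your cycle $(i)\Rightarrow(ii)\Rightarrow(iii)\Rightarrow(iv)\Rightarrow(i)$ — using $p=f\vee\underline{0}$, $q=(-f)\vee\underline{0}$ for $(iii)\Rightarrow(iv)$ and $h=\vert f\vert-\vert g\vert$ together with Corollary \ref{SMP3,3.6} for $(iv)\Rightarrow(i)$ — is precisely that classical argument transplanted to $C_c(X)_F$. All steps check out, including the key inclusion $Z\cap Z(f)\subseteq Z\cap Z(g)$ forced by $\vert f\vert\geq\vert g\vert$ on $Z$.
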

	
	\begin{corollary}
		Each prime ideal in $C_c(X)_F$ is contained in a unique maximal ideal, in other words $C_c(X)_F$ is a Gelfand ring.
	\end{corollary}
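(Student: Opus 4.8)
The plan is to transfer the whole question to the lattice of zero sets and exploit the bijection of Theorem \ref{SMP3,3.2}(ii) between maximal ideals of $C_c(X)_F$ and $(\mathcal{Z}_c)_F$-ultrafilters on $X$. Existence of a maximal ideal over a given prime $P$ is automatic, so the real content is uniqueness, which I would reformulate as: a \emph{prime} $(\mathcal{Z}_c)_F$-filter is contained in exactly one $(\mathcal{Z}_c)_F$-ultrafilter. The first thing to secure is that $Z[P]$ is indeed a prime filter, i.e. $Z_1\cup Z_2\in Z[P]$ forces $Z_1\in Z[P]$ or $Z_2\in Z[P]$. Since a prime ideal need not be a $(\mathcal{Z}_c)_F$-ideal, I cannot read this off directly; instead I would pass to $P^{*}=Z^{-1}Z[P]$. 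By construction $P^{*}$ is a $(\mathcal{Z}_c)_F$-ideal with $Z[P^{*}]=Z[P]$, and since it contains the prime ideal $P$, Theorem \ref{SMP3,3.7} (condition (ii)$\Rightarrow$(i)) shows $P^{*}$ is prime. For a prime $(\mathcal{Z}_c)_F$-ideal the primeness of the filter is then immediate: writing $Z_1\cup Z_2=Z(f_1f_2)$ and using Corollary \ref{SMP3,3.6} gives $f_1f_2\in P^{*}$, whence $f_1\in P^{*}$ or $f_2\in P^{*}$. As any maximal $M\supseteq P$ satisfies $Z[M]\supseteq Z[P]$, it suffices to work with the single prime filter $\mathcal{G}=Z[P]$.

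For uniqueness I would argue by contradiction. Suppose $\mathcal{G}$ is contained in two distinct ultrafilters $\mathcal{U}_1\ne\mathcal{U}_2$. A routine analogue of the standard characterization of $z$-ultrafilters (a zero set meeting every member of the filter already belongs to it) lets me choose $Z_1\in\mathcal{U}_1$ and $Z_2\in\mathcal{U}_2$ with $Z_1\cap Z_2=\emptyset$: take $Z_1\in\mathcal{U}_1\setminus\mathcal{U}_2$, and since $Z_1\notin\mathcal{U}_2$ some $Z_2\in\mathcal{U}_2$ misses it. Disjoint zero sets are $\mathcal{F}_c$-completely separated by the Corollary to Theorem \ref{SMP3,2.3}, so there is $h\in C_c(X)_F$ with $h(Z_1)=\{0\}$ and $h(Z_2)=\{1\}$. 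Setting $A=\{x:h(x)\le\frac12\}$ and $B=\{x:h(x)\ge\frac12\}$, Property (v) makes both of these zero sets, while $A\cup B=X\in\mathcal{G}$. Primeness of $\mathcal{G}$ then forces $A\in\mathcal{G}$ or $B\in\mathcal{G}$. If $A\in\mathcal{G}\subseteq\mathcal{U}_2$, then $A\cap Z_2\in\mathcal{U}_2$; but $A\cap Z_2\subseteq\{h\le\frac12\}\cap\{h=1\}=\emptyset$, contradicting that no $(\mathcal{Z}_c)_F$-filter contains $\emptyset$. The case $B\in\mathcal{G}\subseteq\mathcal{U}_1$ is symmetric, using $Z_1$. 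Hence $\mathcal{U}_1=\mathcal{U}_2$, and the bijection of Theorem \ref{SMP3,3.2}(ii) delivers a unique maximal ideal over $P$.

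The step I expect to be the genuine obstacle is the passage to $P^{*}$: because a prime ideal of $C_c(X)_F$ need not be a $(\mathcal{Z}_c)_F$-ideal (the naive quotient $g/f$ attached to a containment $Z(f)\subseteq Z(g)$ may fail to be bounded, to be continuous off a finite set, or to have countable range), one cannot assert primeness of the filter $Z[P]$ outright and must route it through Theorem \ref{SMP3,3.7}. The secondary point needing care is the disjointness selection for two distinct ultrafilters, which I would isolate as a small lemma; everything past that is just the normality of the zero-set lattice guaranteed by Theorem \ref{SMP3,2.3}, combined with the prime-filter splitting of $A\cup B=X$.
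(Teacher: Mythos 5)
Your argument is correct, but it is not the route the paper intends: the paper states this corollary without proof, explicitly as the analogue of Theorem 2.11 of Gillman--Jerison, whose proof is a short direct algebraic one. In that argument one takes two distinct maximal ideals $M_1,M_2\supseteq P$, writes $f_1+f_2=1$ with $f_i\in M_i$, sets $g_1=(\vert f_1\vert-\vert f_2\vert)\vee 0$ and $g_2=(\vert f_2\vert-\vert f_1\vert)\vee 0$, notes $g_1g_2=0\in P$ so that, say, $g_1\in P\subseteq M_2$, and then observes that $\vert f_2\vert+g_1=\vert f_1\vert\vee\vert f_2\vert\geq\frac{1}{2}$ is a unit lying in the $(\mathcal{Z}_c)_F$-ideal $M_2$ --- a contradiction. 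That proof needs only the lattice structure and the fact that maximal ideals are $(\mathcal{Z}_c)_F$-ideals. Your proof instead passes to the zero-set lattice: you upgrade $P$ to the prime $(\mathcal{Z}_c)_F$-ideal $P^{*}=Z^{-1}Z[P]$ via Theorem \ref{SMP3,3.7}, deduce that $Z[P]$ is a prime $(\mathcal{Z}_c)_F$-filter, and show a prime filter lies in a unique ultrafilter using the ``meets every member implies membership'' characterization of ultrafilters together with the normality of the zero-set lattice (Theorem \ref{SMP3,2.3} and its corollary). Every step checks out --- the identity $Z[Z^{-1}[\mathcal{F}]]=\mathcal{F}$ makes $P^{*}$ a $(\mathcal{Z}_c)_F$-ideal, Corollary \ref{SMP3,3.6} gives primeness of the filter, and the splitting $X=\{h\leq\frac{1}{2}\}\cup\{h\geq\frac{1}{2}\}$ with Property (v) finishes it --- so this is a valid, if longer, alternative. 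What it buys is a structural statement of independent interest (each prime $(\mathcal{Z}_c)_F$-filter has a unique ultrafilter extension, mirroring Gillman--Jerison 2.13) that meshes with the $\beta_\circ^F X$ machinery of Section 4; what it costs is the need for the auxiliary disjointness lemma for distinct ultrafilters, which the direct argument avoids entirely.
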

	
	\begin{theorem}
		Sum of any two $(\mathcal{Z}_c)_F$-ideals in $C_c(X)_F$ is a $(\mathcal{Z}_c)_F$-ideal.
	\end{theorem}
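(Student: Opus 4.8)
The plan is to show that if $I$ and $J$ are $(\mathcal{Z}_c)_F$-ideals in $C_c(X)_F$, then their sum $I+J$ is again a $(\mathcal{Z}_c)_F$-ideal. By Corollary \ref{SMP3,3.6}, it suffices to verify the following: whenever $h\in I+J$ and $g\in C_c(X)_F$ satisfy $Z(h)\subseteq Z(g)$, then $g\in I+J$. So I would start with an arbitrary $h\in I+J$, write $h=f_1+f_2$ with $f_1\in I$ and $f_2\in J$, and take any $g\in C_c(X)_F$ with $Z(h)\subseteq Z(g)$.

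The key observation is the set-theoretic containment $Z(f_1)\cap Z(f_2)\subseteq Z(f_1+f_2)=Z(h)$, since any point annihilating both $f_1$ and $f_2$ also annihilates their sum. Hence $Z(f_1)\cap Z(f_2)\subseteq Z(g)$. Now I would use Property (iii) of zero sets: $Z(f_1)\cap Z(f_2)=Z(f_1^2+f_2^2)$. The natural move is to form the product $g\cdot(f_1^2+f_2^2)$ (or an appropriate companion), whose zero set is $Z(g)\cup\big(Z(f_1)\cap Z(f_2)\big)$ by Property (iv); but since $Z(f_1)\cap Z(f_2)\subseteq Z(g)$, this simplifies back to $Z(g)$. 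The cleaner route, however, is to directly exhibit $g$ as a sum of an element of $I$ and an element of $J$ by splitting via the functions $\frac{f_1^2}{f_1^2+f_2^2}$ and $\frac{f_2^2}{f_1^2+f_2^2}$ wherever $f_1^2+f_2^2\neq 0$.

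Concretely, I would argue as follows. Consider $g_1=g\cdot\dfrac{f_1^2}{f_1^2+f_2^2}$ and $g_2=g\cdot\dfrac{f_2^2}{f_1^2+f_2^2}$, interpreted as $0$ on the set $Z(f_1)\cap Z(f_2)$. On that set $g$ itself vanishes (because $Z(f_1)\cap Z(f_2)\subseteq Z(g)$), so $g_1$ and $g_2$ are well-defined elements of $C_c(X)_F$, and clearly $g=g_1+g_2$. It remains to check membership: since $Z(f_1)\subseteq Z(g_1)$ and $f_1\in I$ with $I$ a $(\mathcal{Z}_c)_F$-ideal, Corollary \ref{SMP3,3.6} gives $g_1\in I$; symmetrically $g_2\in J$. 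Therefore $g=g_1+g_2\in I+J$, completing the proof.

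The main obstacle is verifying that $g_1$ and $g_2$ genuinely lie in $C_c(X)_F$ and establishing the crucial zero-set containments $Z(f_1)\subseteq Z(g_1)$ and $Z(f_2)\subseteq Z(g_2)$. The former requires checking that the piecewise definition produces a function discontinuous on only a finite set with countable range — this follows since each factor is in $C_c(X)_F$ and the quotient is controlled away from $Z(f_1^2+f_2^2)$, but one must confirm the countable-range and finite-discontinuity conditions are preserved under this construction. The containment $Z(f_1)\subseteq Z(g_1)$ holds because on $Z(f_1)$ the numerator $f_1^2$ vanishes (and $g$ vanishes on the overlap with $Z(f_2)$), so $g_1$ is zero there; making this rigorous at points of $Z(f_1)\setminus Z(f_2)$ versus $Z(f_1)\cap Z(f_2)$ is the delicate bookkeeping to handle carefully.
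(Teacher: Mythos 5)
Your proposal is correct and takes essentially the same route as the paper: both reduce the claim to Corollary \ref{SMP3,3.6} and split $g$ as $g\cdot\frac{f_1^2}{f_1^2+f_2^2}+g\cdot\frac{f_2^2}{f_1^2+f_2^2}$ (defined to be $0$ on $Z(f_1)\cap Z(f_2)$, where $g$ already vanishes), checking continuity at the common zeros via the bound $|g_i|\leq|g|$ and then invoking $Z(f_1)\subseteq Z(g_1)$, $Z(f_2)\subseteq Z(g_2)$. The only difference is that the paper carries out explicitly the bookkeeping you flag at the end, by fixing a finite set $F$ off which $f_1,f_2,g$ are continuous and extending the two pieces over $F$ by hand.
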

	
	\begin{proof}
		Let $I$ and $J$ be two $(\mathcal{Z}_c)_F$-ideals of $C_c(X)_F$. Let $f\in I$, $g\in J$, $h\in C_c(X)_F$ and $Z(f+g)\subseteq Z(h)$. Then by Corollary \ref{SMP3,3.6}, it is enough to prove that $h\in I+J$. Now we can find a finite subset $F$ such that $f,g,h\in C_c(X)_F$. Define 
		\[  k(x)= \left\{
		\begin{array}{ll}
			0, & if~~ x\in (Z(f)\cap Z(g))\setminus F \\
			\frac{hf^2}{f^2+g^2}, &if ~~ x\in (X\setminus F)\setminus (Z(f)\cap Z(g)).\\
		\end{array}
		\right. \]
		\[  l(x)= \left\{
		\begin{array}{ll}
			0, & if~~ x\in (Z(f)\cap Z(g))\setminus F \\
			\frac{hg^2}{f^2+g^2}, &if ~~ x\in (X\setminus F)\setminus (Z(f)\cap Z(g)).\\
		\end{array}
		\right. \]
		
		Now we show that $k$ and $l$ are continuous on $X\setminus F$. Moreover, it is enough to show that $k$ and $l$ are continuous on $(Z(f)\cap Z(g))\setminus F$. For $x\in (Z(f)\cap Z(g))\setminus F$, $h(x)=0$ and for any $\epsilon>0$ there exists a neighbourhood $U$ of $x$ such that $h(U)\subseteq (-\epsilon,\epsilon)$. On the other hand $k(x)\leq h(x)$ and $l(x)\leq h(x)$ for all $x\in U$. Hence $k$ and $l$ are continuous on $X\setminus F$. Set $k^*(X\setminus  F)=k(X\setminus F)$, $k^*(F)=h(F)$ and $l^*(X\setminus  F)=l(X\setminus F)$, $l^*(F)=0$. Then $k^*,l^*\in C_c(X)_F$, $Z(f)\subseteq Z(k)\subseteq Z(k^*)$, $Z(g)\subseteq Z(l)\subseteq Z(l^*)$ and $h=k^*+l^*$. Since $I$ and $J$ are $(\mathcal{Z}_c)_F$-ideal of $C_c(X)_F$, $k^*\in I$ and $l^*\in J$. Therefore $h\in I+J$.
	\end{proof}
	
	\begin{corollary}
		Suppose that $\{I_k\}_{k\in S}$ is a collection of $(\mathcal{Z}_c)_F$-ideals of $C_c(X)_F$. Then $\sum\limits_{k\in S}I_k=C_c(X)_F$ or $\sum\limits_{k\in S}I_k$ is a $(\mathcal{Z}_c)_F$-ideal of $C_c(X)_F$.
	\end{corollary}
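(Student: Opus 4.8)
The plan is to reduce the arbitrary (possibly infinite) sum to the finite case already settled in the previous theorem, and then to invoke the characterization of $(\mathcal{Z}_c)_F$-ideals supplied by Corollary \ref{SMP3,3.6}. Write $J=\sum_{k\in S}I_k$ and assume $J\neq C_c(X)_F$, so that $J$ is a proper ideal; under this hypothesis I must show that $J$ is a $(\mathcal{Z}_c)_F$-ideal.

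First I would recall that, by the very definition of a sum of ideals, every element of $J$ is a \emph{finite} sum of elements drawn from the families $I_k$. Hence each $f\in J$ already lies in some finite subsum $\sum_{k\in F}I_k$ with $F\subseteq S$ finite. By the preceding theorem, together with a routine induction on the cardinality $|F|$, this finite subsum is again a $(\mathcal{Z}_c)_F$-ideal; it is moreover proper, since $\sum_{k\in F}I_k\subseteq J$ and $J$ is proper, so the defining identity $Z^{-1}Z[\,\cdot\,]=\,\cdot\,$ genuinely applies to it.

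To verify that $J$ itself is a $(\mathcal{Z}_c)_F$-ideal I would use the criterion of Corollary \ref{SMP3,3.6}: take $f\in J$ and $g\in C_c(X)_F$ with $Z(f)\subseteq Z(g)$, and show $g\in J$. Choosing a finite $F\subseteq S$ with $f\in\sum_{k\in F}I_k$ as above, the finite case guarantees that $\sum_{k\in F}I_k$ is a $(\mathcal{Z}_c)_F$-ideal, so Corollary \ref{SMP3,3.6} gives $g\in\sum_{k\in F}I_k\subseteq J$. Therefore $J$ meets the condition of Corollary \ref{SMP3,3.6} and is a $(\mathcal{Z}_c)_F$-ideal, which completes the dichotomy.

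The argument is essentially bookkeeping, so I do not anticipate a serious obstacle. The only two points demanding care are that each element of an ideal sum involves only finitely many nonzero summands, which is exactly what licenses the passage to the two-ideal theorem and its finite iterate, and the observation that any subsum of a proper sum is itself proper, so that the conclusion of Corollary \ref{SMP3,3.6} can legitimately be applied to the finite subsum.
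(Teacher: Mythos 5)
Your argument is correct and is precisely the finite-reduction argument the paper intends (the corollary is stated without proof, as an immediate consequence of the two-ideal theorem): every element of the sum lies in a finite subsum, which is a proper $(\mathcal{Z}_c)_F$-ideal by induction, and Corollary \ref{SMP3,3.6} then transfers the defining property to the whole sum. You also rightly flag the only delicate point, namely that the finite subsum is proper because it sits inside the proper ideal $\sum_{k\in S}I_k$.
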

	
	In a reduced ring, every minimal prime ideal is also $z$-ideals (which is proved in
	\cite{Mason(1973)}). Now using this result, Theorem \ref{SMP3,3.7} and the above corollary, we have the following corollary.
	\begin{corollary}
		Let $\{P_i\}_{i\in I}$ be a collection of minimal prime ideals of $C_c(X)_F$. Then $\sum\limits_{i\in I}P_i=C_c(X)_F$ or $\sum\limits_{i\in I}P_i$ is a prime ideal of $C_c(X)_F$.
	\end{corollary}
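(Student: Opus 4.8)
The plan is to reduce the statement to the preceding corollary on sums of $(\mathcal{Z}_c)_F$-ideals together with the characterization of prime $(\mathcal{Z}_c)_F$-ideals in Theorem \ref{SMP3,3.7}. First I would record that $C_c(X)_F$ is a reduced ring (Lemma, part (i)), so that the cited result of Mason applies: in a reduced ring every minimal prime ideal is a $z$-ideal. In our setting these $z$-ideals are exactly the $(\mathcal{Z}_c)_F$-ideals (the $(\mathcal{Z}_c)_F$-ideals of $C_c(X)_F$ coincide with its $z^\circ$-ideals), and hence each $P_i$ in the given family is a $(\mathcal{Z}_c)_F$-ideal of $C_c(X)_F$.

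Next I would feed the family $\{P_i\}_{i\in I}$, now known to consist of $(\mathcal{Z}_c)_F$-ideals, into the corollary proved immediately above. That corollary furnishes the dichotomy at once: either $\sum_{i\in I} P_i = C_c(X)_F$, in which case the first alternative of the statement holds and we are finished, or $\sum_{i\in I} P_i$ is again a (necessarily proper) $(\mathcal{Z}_c)_F$-ideal of $C_c(X)_F$.

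In this remaining case I would establish primality through Theorem \ref{SMP3,3.7}. Since each $P_i$ is a prime ideal and $P_i \subseteq \sum_{i\in I} P_i$, the $(\mathcal{Z}_c)_F$-ideal $\sum_{i\in I} P_i$ contains a prime ideal; by the implication (ii)$\Rightarrow$(i) of Theorem \ref{SMP3,3.7}, it is itself a prime ideal. This yields the second alternative and completes the argument.

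The step I expect to require the most care is the bridge in the first paragraph, namely the identification of Mason's abstract ``$z$-ideal'' in the reduced ring $C_c(X)_F$ with the concrete $(\mathcal{Z}_c)_F$-ideal defined here. I would make this explicit so that the external minimal-prime result can legitimately be combined with the internal corollary on sums; once that identification is granted, the rest is a purely formal application of two results already available.
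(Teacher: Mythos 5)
Your argument is correct and follows exactly the route the paper indicates: Mason's result that minimal primes in a reduced ring are $z$-ideals (hence $(\mathcal{Z}_c)_F$-ideals here), the preceding corollary on sums of $(\mathcal{Z}_c)_F$-ideals, and the implication (ii)$\Rightarrow$(i) of Theorem \ref{SMP3,3.7}. The paper gives no further detail, so your write-up, including the explicit bridge between Mason's abstract $z$-ideals and the $(\mathcal{Z}_c)_F$-ideals of this ring, is if anything more careful than the original.
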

	
	\begin{definition}
		An ideal $I$ of $C_c(X)_F$ is called fixed if $\cap Z[I]\neq\emptyset$. Otherwise it is called free.
	\end{definition}
	
	\begin{theorem}
		For any topological space $X$, the following statements are equivalent.
		\begin{itemize}
			\item[(i)] The space $X$ is finite.
			\item[(ii)] Every proper ideal of $C_c(X)_F$ (or $C_c^*(X)_F$) is fixed.
			\item[(iii)] Every maximal ideal of $C_c(X)_F$ (or $C_c^*(X)_F$) is fixed.
			\item[(iv)] Each $(\mathcal{Z}_c)_F$-filter on $X$ is fixed.
			\item[(v)] Each $(\mathcal{Z}_c)_F$-ultrafilter on $X$ is fixed.
		\end{itemize}
	\end{theorem}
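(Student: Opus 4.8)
The plan is to establish the cycle $(i)\Rightarrow(ii)\Rightarrow(iii)\Rightarrow(v)\Rightarrow(iv)\Rightarrow(i)$, using the ideal--filter duality of Theorem \ref{SMP3,3.2} to pass between the ring-theoretic conditions $(ii),(iii)$ and the filter-theoretic conditions $(iv),(v)$. Two of the links are immediate. The implication $(ii)\Rightarrow(iii)$ holds because every maximal ideal is in particular a proper ideal. For $(v)\Rightarrow(iv)$ I would take an arbitrary $(\mathcal{Z}_c)_F$-filter $\mathcal{F}$, extend it by Zorn's lemma to a $(\mathcal{Z}_c)_F$-ultrafilter $\mathcal{U}\supseteq\mathcal{F}$, and observe that $\bigcap\mathcal{F}\supseteq\bigcap\mathcal{U}$; hence if every $(\mathcal{Z}_c)_F$-ultrafilter is fixed, then so is every $(\mathcal{Z}_c)_F$-filter.

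For $(i)\Rightarrow(ii)$ I would first use that a finite $T_1$ space is discrete, so that every real-valued function on $X$ is continuous with finite range; thus $C_c(X)_F=C_c^*(X)_F$ coincides with the finite product $\prod_{x\in X}\mathbb{R}$ of fields. Each proper ideal of such a product has the form $\{f:f|_S=0\}$ for some nonempty $S\subseteq X$, and since every member vanishes on $S$ we obtain $S\subseteq\bigcap Z[I]\neq\emptyset$, so $I$ is fixed. The implication $(iii)\Rightarrow(v)$ is then handled by Theorem \ref{SMP3,3.2}(ii): for a $(\mathcal{Z}_c)_F$-ultrafilter $\mathcal{U}$ the ideal $Z^{-1}[\mathcal{U}]$ is maximal with $Z[Z^{-1}[\mathcal{U}]]=\mathcal{U}$, so $\bigcap\mathcal{U}$ is nonempty precisely when the corresponding maximal ideal is fixed, and fixedness transfers from $(iii)$ to $(v)$.

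The main content, and the step I expect to demand real work, is $(iv)\Rightarrow(i)$, which I would prove by contraposition: assuming $X$ infinite, I must produce a free $(\mathcal{Z}_c)_F$-filter. The decisive point is that, since $X$ is $T_1$, any finite set $F$ is closed with finite boundary, so its indicator $\chi_F$ is discontinuous only on a finite set and takes finitely many values; hence $\chi_F\in C_c(X)_F$ and every cofinite set $X\setminus F=Z(\chi_F)$ belongs to $Z[C_c(X)_F]$. This is exactly the feature absent in $C_c(X)$, and it is why the governing condition here is finiteness rather than compactness. I would then let $\mathcal{F}$ be the collection of all cofinite members of $Z[C_c(X)_F]$ and verify the filter axioms: $\emptyset\notin\mathcal{F}$ because $X$ is infinite, $\mathcal{F}$ is closed under intersection since $(X\setminus F_1)\cap(X\setminus F_2)=X\setminus(F_1\cup F_2)$, and it is upward closed because any zero set containing a cofinite set is itself cofinite. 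Finally $X\setminus\{x\}=Z(\chi_{\{x\}})\in\mathcal{F}$ for each $x\in X$, so $\bigcap\mathcal{F}\subseteq\bigcap_{x\in X}(X\setminus\{x\})=\emptyset$, showing that $\mathcal{F}$ is free and closing the cycle.
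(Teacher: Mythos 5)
The paper states this theorem but supplies no proof, so there is nothing to compare your argument against; judged on its own, your cycle $(i)\Rightarrow(ii)\Rightarrow(iii)\Rightarrow(v)\Rightarrow(iv)\Rightarrow(i)$ is correct and is the natural argument. The two pivotal observations are both right: in a finite $T_1$ space the ring collapses to the product $\prod_{x\in X}\mathbb{R}$, whose proper ideals are visibly fixed; and for the converse, since $X$ is $T_1$ every finite $F$ is closed, so $\chi_F\in C_c(X)_F$ and every cofinite set lies in $Z[C_c(X)_F]$, which makes the cofinite sets a free $(\mathcal{Z}_c)_F$-filter when $X$ is infinite. This correctly identifies why the dividing line here is finiteness rather than compactness, and your verifications of the filter axioms and of $Z[Z^{-1}[\mathcal{U}]]=\mathcal{U}$ in the step $(iii)\Rightarrow(v)$ are sound.

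The one loose end is the parenthetical ``(or $C_c^*(X)_F$)'' in $(ii)$ and $(iii)$. Your cycle enters the $C_c^*(X)_F$ versions only through $(i)\Rightarrow(ii)$ (where finiteness gives $C_c(X)_F=C_c^*(X)_F$, so that direction is fine), but it never returns from ``every maximal ideal of $C_c^*(X)_F$ is fixed'' back to $(i)$; Theorem \ref{SMP3,3.2} as stated only provides the ideal--ultrafilter duality for $C_c(X)_F$. The patch is short and uses your own construction: since $Z[C_c^*(X)_F]=Z[C_c(X)_F]$ and the functions $\chi_F$ are bounded, for infinite $X$ the set $\{f\in C_c^*(X)_F:Z(f)\text{ is cofinite}\}$ is a proper ideal of $C_c^*(X)_F$, and any maximal ideal of $C_c^*(X)_F$ containing it has $\bigcap Z[M]\subseteq\bigcap_{x\in X}\bigl(X\setminus\{x\}\bigr)=\emptyset$, hence is free. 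With that sentence added, the proof is complete.
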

	
	Let $M_A=\{f\in C_c(X)_F:A\subseteq Z(f)\}$, for a subset $A$ of $X$. Then $M_A$ is an ideal of $C_c(X)_F$ and $M_A=\bigcap\limits_{x\in A} M_x$, where $M_x=\{f\in C_c(X)_F: x\in Z(f)\}$ is a fixed maximal ideal of $C_c(X)_F$.
	
	\begin{theorem}\label{SMP3,3.14}
		The following statements are true.
		\begin{itemize}
			\item[(i)] For two ideals $I$ and $J$ of $C_c(X)_F$, $Ann(I)\subseteq Ann(J)$ if and only if $\bigcap Z[I]\subseteq \bigcap Z[J]$ if and only if $\bigcap COZ[J]\subseteq\bigcap COZ[I]$.
			\item[(ii)] For any subset $S$ of $C_c(X)_F$ we have $Ann(S)=M_{(\bigcup COZ[S])}=\{f\in C_c(X)_F: \bigcup COZ[S]\subseteq Z(f)\}$.
		\end{itemize}
	\end{theorem}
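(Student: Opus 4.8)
The plan is to establish part (ii) first by a direct pointwise computation, and then to deduce part (i) from it together with a separation argument using singleton characteristic functions.

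For part (ii), I would fix $g\in C_c(X)_F$ and exploit the pointwise nature of multiplication in $C_c(X)_F$. The key observation is that for any $f$, the equation $gf=0$ holds if and only if $COZ(f)\subseteq Z(g)$ (where $COZ(f)=X\setminus Z(f)$): indeed $g(x)f(x)=0$ forces $g(x)=0$ at every point where $f(x)\neq0$, and the converse inclusion gives $gf=0$ at once. Hence $g\in Ann(S)$ means $COZ(f)\subseteq Z(g)$ for every $f\in S$, which is precisely $\bigcup COZ[S]\subseteq Z(g)$, i.e. $g\in M_{(\bigcup COZ[S])}$. This yields the chain of equalities in (ii) with no further work once the pointwise equivalence is in place.

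For part (i), I would first record the De Morgan identity $\bigcup COZ[I]=X\setminus\bigcap Z[I]$ valid for every ideal $I$, which shows that the cozero-set inclusion is exactly the complement of the zero-set inclusion; this disposes of the equivalence with the cozero condition immediately. For the remaining equivalence, apply (ii) with $S=I$ and $S=J$ to write $Ann(I)=M_{(\bigcup COZ[I])}=\{g\in C_c(X)_F:COZ(g)\subseteq\bigcap Z[I]\}$, and similarly for $J$. Putting $P=\bigcap Z[I]$ and $Q=\bigcap Z[J]$, the implication $P\subseteq Q\Rightarrow Ann(I)\subseteq Ann(J)$ is transparent from this description. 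For the reverse implication I would use the separating functions: for each $x\in P$ the characteristic function $\chi_{\{x\}}$ lies in $C_c(X)_F$ (it has countable range and, as $X$ is $T_1$, is discontinuous only on the finite set $\{x\}$, a fact already invoked for the ring's basic properties) and satisfies $COZ(\chi_{\{x\}})=\{x\}\subseteq P$, so $\chi_{\{x\}}\in Ann(I)\subseteq Ann(J)$, forcing $\{x\}\subseteq Q$; hence $P\subseteq Q$.

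The computations here are all elementary; the only point deserving care is the availability of the singleton functions $\chi_{\{x\}}\in C_c(X)_F$, which is exactly what lets annihilators detect individual points and thereby converts an inclusion of annihilators back into an inclusion of common zero sets. I do not anticipate a genuine obstacle, since both the pointwise criterion $gf=0\Leftrightarrow COZ(f)\subseteq Z(g)$ and the $T_1$-membership of $\chi_{\{x\}}$ are immediate; the main bookkeeping is simply to keep straight the complement relation between $\bigcap Z[\cdot]$ and $\bigcup COZ[\cdot]$ throughout.
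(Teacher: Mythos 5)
Your proposal is correct and follows essentially the same route as the paper's own proof: both rest on the pointwise criterion $gf=0\Leftrightarrow COZ(f)\subseteq Z(g)$ for one direction and on the fact that $\chi_{\{x\}}\in C_c(X)_F$ (since $X$ is $T_1$) to detect individual points for the other. The only cosmetic difference is that you derive (i) from (ii) while the paper argues the two parts independently, and your reading of the third condition in (i) as the union form $\bigcup COZ[J]\subseteq\bigcup COZ[I]$ (via De Morgan) is the one under which the stated equivalence actually holds.
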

	
	\begin{proof}
		(i) Let $x\in \bigcap\limits_{f\in I}Z(f)$. Then $h=\chi_{\{x\}}\in C_c(X)_F$ and $x\in X\setminus Z(h)\subseteq\bigcap\limits_{f\in I}Z(f)$. Hence $fh=0$ for all $f\in I$. Then $h\in Ann(I)\subseteq Ann(J)$. Therefore $gh=0$ for each $g\in J$. Thus $x\in X\setminus Z(h)\subseteq \bigcap\limits_{g\in J}Z(g)$. Conversely, let $h\in Ann(I)$. Then $hf=0$ for all $f\in I$. This implies that $X\setminus Z(h)\subseteq \bigcap\limits_{f\in I}Z(f)$. Then by given hypothesis, $X\setminus Z(h)\subseteq Z(g)$ for each $g\in J$. Thus $gh=0$, for each $g\in J$, implies $Ann(I)\subseteq Ann(J)$.
		
		(ii) Let $f\in Ann(S)$. Then $fg=0$, for all $g\in S$. This shows that $\bigcup COZ[S]\subseteq Z(f)$ i.e., $f\in M_{(\bigcup COZ[S])}$. For the reverse part, let $f\in M_{(\bigcup COZ[S])}$. Then $X\setminus Z(g)\subseteq \bigcup COZ[S]\subseteq Z(f)$ for each $g\in S$. Thus $f\in Ann(S)$.
	\end{proof}
	
	A non-zero ideal in a commutative ring is said to be essential if it intersects every non-zero ideals non-trivially. Let $R$ be a commutative ring with unity. For $a\in R$, let $P_a$ be the intersection of all minimal prime ideals of $R$ containing $a$. Then an ideal $I$ of $R$ is called a $z^\circ$-ideal of $R$ if for each $a\in I$, $P_a\subseteq I$.
	
	We now state a well-known result that if $I$ is an ideal of a commutative reduced ring $R$, then $I$ is an essential ideal if and only if $Ann(I)=\{r\in R:rI=0\}=0$ (see \cite{MO(2008),OM(1985)} and Lemma 2.1 in \cite{Taherifar(2014)}).
	
	\begin{corollary}\label{SMP3,3.15}
		The following statement hold.
		\begin{itemize}
			\item[(i)] An ideal $I$ of $C_c(X)_F$ is an essential ideal if and only if $I$ is a free ideal.
			\item[(ii)] The set of all $(\mathcal{Z}_c)_F$-ideals and $z^\circ$-ideals of $C_c(X)_F$ are identical.
		\end{itemize}
	\end{corollary}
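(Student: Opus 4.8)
The plan is to treat the two parts separately, each time converting the hypotheses into the annihilator language of Theorem \ref{SMP3,3.14} and exploiting that $C_c(X)_F$ is reduced. For (i) I would start from the quoted fact that in a reduced ring an ideal $I$ is essential exactly when $Ann(I)=0$, so everything reduces to deciding when this annihilator vanishes. Applying Theorem \ref{SMP3,3.14}(ii) with $S=I$ gives $Ann(I)=M_{(\bigcup COZ[I])}=\{f\in C_c(X)_F:\bigcup COZ[I]\subseteq Z(f)\}$, and since $\bigcup COZ[I]=X\setminus\bigcap Z[I]$ this is nonzero precisely when $\bigcup COZ[I]\neq X$: if some $x\notin\bigcup COZ[I]$ then $\chi_{\{x\}}$ is a nonzero annihilator, while if $\bigcup COZ[I]=X$ then $Z(f)=X$ forces $f=\underline{0}$. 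Hence $Ann(I)=0$ if and only if $\bigcap Z[I]=\emptyset$, i.e.\ $I$ is free, which is the assertion.

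For (ii) the strategy is to show that both families are cut out by the zero-set closure condition of Corollary \ref{SMP3,3.6}. First I would record the equivalence $Z(f)\subseteq Z(g)\iff Ann(f)\subseteq Ann(g)$, obtained from Theorem \ref{SMP3,3.14}(i) applied to the principal ideals $(f)$ and $(g)$ together with the observation that $\bigcap Z[(f)]=Z(f)$ (because $Z(fh)=Z(f)\cup Z(h)\supseteq Z(f)$ for every $h$, with equality at $h=\underline{1}$). By Corollary \ref{SMP3,3.6} this already shows that $I$ is a $(\mathcal{Z}_c)_F$-ideal if and only if $f\in I$ and $Ann(f)\subseteq Ann(g)$ force $g\in I$.

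The main step, and the one I expect to demand the most care, is to recast the $z^\circ$-condition in the same annihilator form by proving that in the reduced ring $C_c(X)_F$ one has $P_a=\{b:Ann(a)\subseteq Ann(b)\}$. For $\subseteq$ I would argue contrapositively: if $ca=0$ but $cb\neq0$, then reducedness lets $cb$ avoid some minimal prime $P$, so $c,b\notin P$ while $ca=0\in P$ forces $a\in P$, exhibiting a minimal prime over $a$ missing $b$, whence $b\notin P_a$. For $\supseteq$ I would use the standard description of a minimal prime $P$ in a reduced ring (every element of $P$ is killed by some element outside $P$): if $a\in P$ choose $y\notin P$ with $ya=0$, so $y\in Ann(a)\subseteq Ann(b)$ gives $yb=0\in P$ and hence $b\in P$; as $P$ was an arbitrary minimal prime over $a$, we get $b\in P_a$. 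Combining with the previous paragraph, $I$ is a $z^\circ$-ideal if and only if $f\in I$ and $Ann(f)\subseteq Ann(g)$ imply $g\in I$, which is precisely the $(\mathcal{Z}_c)_F$-ideal condition, so the two sets coincide. The delicate point is the careful invocation of the minimal-prime characterization and ensuring the annihilator translation is applied at the level of single generators rather than arbitrary ideals.
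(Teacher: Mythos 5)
Your proof is correct and follows essentially the same route as the paper: part (i) is read off from Theorem \ref{SMP3,3.14} together with the quoted fact that essentiality in a reduced ring means vanishing annihilator, and part (ii) rests on translating both the $(\mathcal{Z}_c)_F$-condition and the $z^\circ$-condition into the single implication ``$Ann(f)\subseteq Ann(g)$ and $f\in I$ force $g\in I$''. The only difference is that you explicitly verify the identity $P_a=\{b:Ann(a)\subseteq Ann(b)\}$ for reduced rings and the reduction of Theorem \ref{SMP3,3.14}(i) to principal ideals (via $\bigcap Z[(f)]=Z(f)$), both of which the paper's two-line argument leaves implicit.
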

	
	\begin{proof}
		(i) It follows trivially from the above Theorem \ref{SMP3,3.14}.
		
		(ii) Clearly, every $z^\circ$-ideal is a $(\mathcal{Z}_c)_F$-ideal. Now let $I$ be a $(\mathcal{Z}_c)_F$-ideal and $Ann(f)\subseteq Ann(g)$. Then using Theorem \ref{SMP3,3.14}, we have $Z(f)\subseteq Z(g)$. Therefore $g\in I$. This completes the proof.
	\end{proof}
	
	It is well known that the intersection of all essential ideals or sum of all minimal prime ideals in a commutative ring with unity is called socle (see \cite{OM(1985)}).
	
	\begin{proposition}
		In a commutative ring with unity the following statements are true.
		\begin{itemize}
			\item[(i)] A non-zero ideal $I$ of $C_c(X)_F$ is minimal if and only if $I$ is generated by $\chi_{\{a\}}$, for some $a\in X$.
			\item[(ii)] A non-zero ideal $I$ of $C_c(X)_F$ is minimal if and only if $\vert Z[I]\vert=2$.
			\item[(iii)] The socle of $C_c(X)_F$ consists of all functions which vanish everywhere except on a finite subset of $X$.
			\item[(iv)] The socle of $C_c(X)_F$ is an essential ideal which is also free.
		\end{itemize}
	\end{proposition}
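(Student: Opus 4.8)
The plan is to prove (i) first and deduce the remaining three parts from it. The single computational fact underlying everything is that for each $a\in X$ one has $\chi_{\{a\}}\in C_c(X)_F$ and $g\chi_{\{a\}}=g(a)\chi_{\{a\}}$ for every $g\in C_c(X)_F$; consequently the principal ideal $(\chi_{\{a\}})$ coincides with the one-dimensional subspace $\mathbb{R}\chi_{\{a\}}=\{c\chi_{\{a\}}:c\in\mathbb{R}\}$. For the backward implication of (i) I would observe that any non-zero ideal contained in $(\chi_{\{a\}})$ contains some $c\chi_{\{a\}}$ with $c\neq 0$, whence $\chi_{\{a\}}=c^{-1}(c\chi_{\{a\}})$ lies in it and the ideal is all of $(\chi_{\{a\}})$, so $(\chi_{\{a\}})$ is minimal. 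For the forward implication, given a minimal ideal $I$ and a non-zero $f\in I$, I would pick $a$ with $f(a)\neq 0$; then $f\chi_{\{a\}}=f(a)\chi_{\{a\}}$ is a non-zero member of $I$, so $\chi_{\{a\}}\in I$ and $(\chi_{\{a\}})\subseteq I$, and minimality forces $I=(\chi_{\{a\}})$.

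Part (ii) follows quickly. For $I=(\chi_{\{a\}})$ one computes $Z[I]=\{X,\,X\setminus\{a\}\}$, so $|Z[I]|=2$. Conversely, since $0\in I$ gives $X\in Z[I]$, the hypothesis $|Z[I]|=2$ means every non-zero $f\in I$ shares one common zero set $Z_0\neq X$. The key step is to force the cozero set $X\setminus Z_0$ to be a single point: if it contained distinct points $a,b$, then for a non-zero $f\in I$ the element $f\chi_{\{a\}}=f(a)\chi_{\{a\}}$ would be a non-zero member of $I$ with zero set $X\setminus\{a\}\neq Z_0$, producing a third member of $Z[I]$, a contradiction. Hence $X\setminus Z_0=\{a\}$, every non-zero element of $I$ is a scalar multiple of $\chi_{\{a\}}$, so $I=(\chi_{\{a\}})$ and (i) applies.

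For (iii) I would use that the socle is the sum of all minimal ideals of the ring; by (i) these are exactly the $(\chi_{\{a\}})=\mathbb{R}\chi_{\{a\}}$, so the socle is $\sum_{a\in X}\mathbb{R}\chi_{\{a\}}$, i.e. the set of finite linear combinations $\sum_{i=1}^{n}c_i\chi_{\{a_i\}}$, which is precisely the collection of functions vanishing off a finite set (each such function has finite range and finitely many discontinuities, hence lies in $C_c(X)_F$). For (iv) I would check freeness directly: for every $b\in X$ the function $\chi_{\{b\}}$ lies in the socle with $b\notin Z(\chi_{\{b\}})$, so $\bigcap Z[\mathrm{socle}]=\emptyset$; Corollary \ref{SMP3,3.15}(i) then upgrades ``free'' to ``essential'' (alternatively, essentiality is immediate, since any non-zero ideal $J$ contains a non-zero $f$, and then $f(a)\chi_{\{a\}}\neq 0$ for a suitable $a$ is a common element of $J$ and the socle).

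The only genuinely non-routine point is the converse in (ii), namely forcing a two-element $Z[I]$ to have singleton cozero set; everything else is bookkeeping with the identity $g\chi_{\{a\}}=g(a)\chi_{\{a\}}$. A minor caveat worth noting is the degenerate case in which $X$ is finite, where the socle is the whole ring and the ``free/essential'' assertion of (iv) should be read accordingly.
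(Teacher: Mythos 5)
Your proposal is correct and follows essentially the same route as the paper: the identity $g\chi_{\{a\}}=g(a)\chi_{\{a\}}$ drives part (i), part (ii) reduces to showing every non-zero element of $I$ is a scalar multiple of some $\chi_{\{a\}}$, the socle is identified as the span of the $\chi_{\{a\}}$'s, and part (iv) invokes Corollary \ref{SMP3,3.15}(i) (you go free $\Rightarrow$ essential, the paper goes essential $\Rightarrow$ free, but the corollary is an equivalence so this is immaterial). Your write-up is in fact slightly more careful than the paper's, e.g.\ in explicitly using minimality to close the forward implication of (i).
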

	
	\begin{proof}
		(i) Let $I$ be a non-zero ideal of $C_c(X)_F$ and $f$ be a non-zero element of $I$. Then there exists $a\in X$ such that $f(a)\neq 0$. Now $\chi_{\{a\}}=\frac{1}{f(a)}\chi_{\{a\}}f\in I$. This shows that $I$ is generated by $\chi_{\{a\}}$. Conversely, let $a\in X$. Then the ideal generated by $\chi_{\{a\}}$ is the set of all constant multiple of $\chi_{\{a\}}$, which is clearly a minimal ideal.
		
		(ii) Let us assume $\vert Z[I]\vert=2$ and $0\neq f\in I$ with $f(a)\neq 0$ for some $a\in X$. Then $\chi_{\{a\}}\in I$ and for any non-zero  element $g\in I$, $Z(g)=Z(\chi_{\{a\}})=X\setminus \{a\}$. Thus $g=g(a)\chi_{\{a\}}$ and hence $I$ is generated by $\chi_{\{a\}}$. Hence by (i), $I$ is minimal and the remaining part of the proof follows immediately.
		
		(iii) From $(i)$, we show that the socle of $C_c(X)_F$ is equal to the ideal generate by $\chi_{\{a\}}$'s which is equal to the set of all functions that vanishes everywhere except on a finite set.
		
		(iv) Clearly from $(i)$, any non-zero function $f$ has a non-zero multiple which is in the socle of $C_c(X)_F$. This implies that socle is essential. Then by Corollary \ref{SMP3,3.15}, the socle is a free ideal.
	\end{proof}
	
	\begin{corollary}
		For a topological space $X$, the the following statements are equivalent:
		\begin{itemize}
			\item[(i)] $X$ is finite set.
			\item[(ii)] $C_c(X)_F=Soc(C_c(X)_F)$, where $Soc(C_c(X)_F)$ is the socle of $C_c(X)_F$.
		\end{itemize}
	\end{corollary}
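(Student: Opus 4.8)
The plan is to reduce both implications to the explicit description of the socle obtained in the preceding proposition, namely that $Soc(C_c(X)_F)$ is precisely the set of those $f\in C_c(X)_F$ whose cozero set $X\setminus Z(f)$ is finite. Since the socle is in any case an ideal of $C_c(X)_F$, the inclusion $Soc(C_c(X)_F)\subseteq C_c(X)_F$ is automatic, so the equality in (ii) amounts to the single inclusion $C_c(X)_F\subseteq Soc(C_c(X)_F)$.

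For $(i)\Rightarrow(ii)$ I would argue as follows. Assume $X$ is finite. Then for every $f\in C_c(X)_F$ the cozero set $X\setminus Z(f)$ is a subset of the finite set $X$ and hence is itself finite. By the description of the socle recalled above, this forces $f\in Soc(C_c(X)_F)$. As $f$ was arbitrary, $C_c(X)_F\subseteq Soc(C_c(X)_F)$, and combined with the trivial reverse inclusion we obtain $C_c(X)_F=Soc(C_c(X)_F)$.

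For $(ii)\Rightarrow(i)$ the key observation is to test the hypothesis on the unit of the ring. Suppose $C_c(X)_F=Soc(C_c(X)_F)$. In particular the constant function $\underline{1}$ lies in the socle, so by the same description its cozero set $X\setminus Z(\underline{1})$ is finite. But $\underline{1}$ vanishes nowhere, so $Z(\underline{1})=\emptyset$ and therefore $X\setminus Z(\underline{1})=X$; hence $X$ is finite.

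I do not expect any genuine obstacle here: once the structural description of the socle from the previous proposition is in hand, both directions are immediate. The only point deserving emphasis is the realization that the entire force of $(ii)\Rightarrow(i)$ is captured simply by applying the socle characterization to the unit $\underline{1}$, whose nowhere-vanishing property translates finiteness of its cozero set directly into finiteness of $X$.
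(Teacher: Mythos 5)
Your proposal is correct and follows essentially the same route as the paper: both directions rest on the characterization of the socle as the functions vanishing off a finite set, and your $(ii)\Rightarrow(i)$ argument via $\underline{1}$ is exactly the paper's. The only cosmetic difference is that in $(i)\Rightarrow(ii)$ the paper shows $\underline{1}=\sum_{i=1}^n\chi_{\{x_i\}}$ lies in the socle (so the socle, containing a unit, is the whole ring), whereas you verify membership of an arbitrary $f$ directly; both are immediate.
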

	
	\begin{proof}
		$(i)\implies (ii):$ Let $X=\{x_1,x_2,\cdots ,x_n\}$. Then $1=\sum\limits_{i=1}^n\chi_{\{x_i\}}\in Soc(C_c(X)_F)$, using result $(iii)$ of the above proposition. Therefore $C_c(X)_F=Soc(C_c(X)_F)$.
		
		$(ii)\implies (i):$ Let $C_c(X)_F=Soc(C_c(X)_F)$. Then  $\underline{1}\in Soc(C_c(X)_F)$. Hence from the above proposition, $X=X\backslash Z(\underline{1})$ is a finite set. This completes the proof.
	\end{proof}
	
	\begin{proposition}
		The following statements are true.
		\begin{itemize}
			\item[(i)] Any fixed maximal ideal of $C_c(X)_F$ is generated by an idempotent.
			\item[(ii)] Any non-maximal prime ideal of $C_c(X)_F$ is essential ideal.
		\end{itemize}
	\end{proposition}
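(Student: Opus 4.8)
The plan is to reduce both parts to the concrete principal ideals $M_x$ and then exploit the idempotents supplied by the characteristic functions $\chi_{\{x\}}$, which the earlier sections guarantee lie in $C_c(X)_F$.

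For part (i), I would first note that a fixed maximal ideal $M$ has $\bigcap Z[M]\neq\emptyset$, so I may select a point $x\in\bigcap Z[M]$; every $f\in M$ then vanishes at $x$, giving $M\subseteq M_x$, and since $M_x$ is a proper (in fact maximal) ideal while $M$ is maximal, this forces $M=M_x$. The substantive step is to produce the generating idempotent: put $e=\underline{1}-\chi_{\{x\}}$. A one-line computation using $\chi_{\{x\}}^2=\chi_{\{x\}}$ gives $e^2=e$, and $Z(e)=\{x\}$ shows $e\in M_x$. For the reverse containment I would verify pointwise that $f=fe$ for every $f\in M_x$---the only point to check is $x$, where both sides vanish---so that $M_x=(e)$. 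This exhibits $M=M_x$ as generated by the idempotent $e$.

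For part (ii), I would argue by contradiction through the essential $\Leftrightarrow$ free equivalence of Corollary \ref{SMP3,3.15}(i). If a non-maximal prime ideal $P$ were not essential, it would be fixed, so $\bigcap Z[P]\neq\emptyset$; choosing $x$ in this intersection again yields $P\subseteq M_x$. The decisive move is to feed the relation $\chi_{\{x\}}\cdot e=\chi_{\{x\}}-\chi_{\{x\}}^2=0\in P$ into the primeness of $P$, so that $e\in P$ or $\chi_{\{x\}}\in P$. If $e\in P$ then $M_x=(e)\subseteq P\subseteq M_x$, whence $P=M_x$ is maximal, contrary to hypothesis; if $\chi_{\{x\}}\in P\subseteq M_x$ then $x\in Z(\chi_{\{x\}})=X\setminus\{x\}$, which is absurd. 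Hence $P$ must be free and therefore essential.

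I do not expect a real obstacle here, since everything rests on the membership $\chi_{\{x\}}\in C_c(X)_F$, which simultaneously furnishes the idempotent $\underline{1}-\chi_{\{x\}}$ generating $M_x$ and the annihilation $\chi_{\{x\}}(\underline{1}-\chi_{\{x\}})=0$ driving the prime dichotomy. The only places calling for mild care are the identification $M=M_x$ (and $P\subseteq M_x$) for fixed ideals and the pointwise check $f=fe$ on $M_x$; both are routine and rely only on the definitions of $M_x$ and $Z(f)$.
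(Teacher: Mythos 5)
Your proof is correct, and part (i) is essentially the paper's argument: both identify the fixed maximal ideal with $M_x$ for a point $x$ in $\bigcap Z[M]$ and verify $f=f(\underline{1}-\chi_{\{x\}})$ pointwise (you merely make explicit the identification $M=M_x$ that the paper leaves implicit). For part (ii) you and the paper share the same engine --- the relation $\chi_{\{x\}}(\underline{1}-\chi_{\{x\}})=0$ fed into primeness, with the branch $\underline{1}-\chi_{\{x\}}\in P$ excluded because that element generates a maximal ideal --- but you close the argument differently. The paper runs the dichotomy for \emph{every} $\alpha\in X$, concludes $\chi_{\{\alpha\}}\in P$ for all $\alpha$, hence that $P$ contains the socle, and then invokes the earlier fact that the socle is essential; this actually yields the slightly stronger conclusion $Soc(C_c(X)_F)\subseteq P$. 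You instead argue by contradiction through Corollary \ref{SMP3,3.15}(i) (essential $\Leftrightarrow$ free): a non-essential $P$ would be fixed, so $P\subseteq M_x$ for a single point $x$, and both branches of the prime dichotomy at that one point are absurd. Your route needs only one point and a different supporting lemma, while the paper's is direct and extracts the extra information about the socle; both are sound. The one pedantic step both treatments skip is that $P\neq 0$ (needed for ``essential'' to apply), which follows because in $C_c(X)_F$ every element is a unit or a zero divisor, so the zero ideal is prime only when the ring is a field, in which case it is maximal.
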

	
	\begin{proof}
		(i) Let $\alpha\in X$ and consider the maximal ideal $M_\alpha$. Then for any $f\in M_\alpha$, $f=f(1-\chi_{\{\alpha\}})$. This shows that $M_\alpha$ generated by an idempotent $1-\chi_{\{\alpha\}}$.
		
		(ii) Let $P$ be a non-maximal prime ideal of $C_c(X)_F$. For each $\alpha\in X$, the ideal generated by $1-\chi_{\{\alpha\}}$ is maximal ideal (since ideal generated by $\chi_{\{\alpha\}}$ is minimal), then $1-\chi_{\{\alpha\}}\notin P$. Thus $\chi_{\{\alpha\}}\in P$ and hence $P$ contains in socle. Therefore $P$ is essential.
	\end{proof}
	
	The following theorem is an analogous version of Theorem 2.2 in \cite{MA(2014)}.
	
	\begin{theorem}
		Let $J_1=\{f\in C_c(X)_F:$ for all $g, Z(1-fg)$ is finite$\}$. Then $J_1$ is equal to the intersection of all essential maximal ideals (free maximal ideals) of $C_c(X)_F$. Also, for all $f\in C_c(X)_F$, $COZ(f)$ is a countable set.
	\end{theorem}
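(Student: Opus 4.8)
The plan is to prove the two displayed assertions separately, writing $\mathcal{M}_{\mathrm{free}}$ for the intersection of all free maximal ideals of $C_c(X)_F$. First I would record that, by Corollary \ref{SMP3,3.15}(i), a maximal ideal is essential precisely when it is free, so the two descriptions of the target ideal coincide and it suffices to identify $J_1$ with $\mathcal{M}_{\mathrm{free}}$. Throughout I will use Theorem \ref{SMP3,3.2}, which lets me pass between a free maximal ideal $M$ and the free $(\mathcal{Z}_c)_F$-ultrafilter $Z[M]$, the freeness of $M$ being exactly the condition $\bigcap Z[M]=\emptyset$. Two elementary facts about zero sets will be the engine of the argument: for a finite set $F$ the cofinite set $X\setminus F=Z(\chi_F)$ (with $\chi_F$ the finitely supported characteristic function of $F$) is a member of $Z[C_c(X)_F]$, and finite intersections and finite unions of zero sets are again zero sets.

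For the inclusion $J_1\subseteq\mathcal{M}_{\mathrm{free}}$, I take $f\in J_1$ and a free maximal ideal $M$, and argue by contradiction: if $f\notin M$ then maximality gives $m\in M$ and $g\in C_c(X)_F$ with $1-fg=m$, whence $Z(m)=Z(1-fg)$ is finite, and nonempty because $m$ is not a unit. Since $Z(m)\in Z[M]$ is a finite set while $\bigcap Z[M]=\emptyset$, I can choose for each of its finitely many points a member of $Z[M]$ omitting that point; intersecting these with $Z(m)$ produces $\emptyset\in Z[M]$, which is impossible. Hence $f\in M$ for every free $M$.

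The reverse inclusion $\mathcal{M}_{\mathrm{free}}\subseteq J_1$ rests on the key lemma that every \emph{infinite} member $Z'$ of $Z[C_c(X)_F]$ is contained in some free $(\mathcal{Z}_c)_F$-ultrafilter. To see this I would form the family $\{\,Z'\setminus F : F\subseteq X \text{ finite}\,\}$; each $Z'\setminus F=Z'\cap(X\setminus F)$ is a zero set by the two facts above and is nonempty since $Z'$ is infinite, so the family is a $(\mathcal{Z}_c)_F$-filter base which Zorn's lemma extends to a $(\mathcal{Z}_c)_F$-ultrafilter $\mathcal{U}$; as $\mathcal{U}$ contains $Z'\setminus F$ for every finite $F$, its intersection is empty and $\mathcal{U}$ is free. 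Granting this, suppose $f\in\mathcal{M}_{\mathrm{free}}$ but $Z(1-fg)$ were infinite for some $g$. Since $f(x)g(x)=1$ forces $f(x)\neq 0$, we have $Z(1-fg)\subseteq COZ(f)$, that is $Z(1-fg)\cap Z(f)=\emptyset$. Extending the infinite zero set $Z(1-fg)$ to a free ultrafilter $\mathcal{U}$ and noting that $f$ lies in the free maximal ideal $Z^{-1}[\mathcal{U}]$ gives $Z(f)\in\mathcal{U}$ as well, so $\emptyset=Z(f)\cap Z(1-fg)\in\mathcal{U}$, a contradiction. Thus $Z(1-fg)$ is finite for all $g$ and $f\in J_1$.

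The same circle of ideas yields the final assertion, read for $f\in J_1$: the z-ultrafilter argument above shows that $f\in J_1=\mathcal{M}_{\mathrm{free}}$ forces $COZ(f)$ to contain no infinite zero set. Now I write $COZ(f)=\bigcup_{n\in\mathbb{N}}Z_n$ with $Z_n=\{x:|f(x)|\geq 1/n\}$; each $Z_n=\{f\geq 1/n\}\cup\{f\leq -1/n\}$ is a zero set by items (iv) and (v) of the zero-set properties in Section 2, and is contained in $COZ(f)$, hence finite. Therefore $COZ(f)$ is a countable union of finite sets and so is countable. I expect the main obstacle to be the extension lemma of the third paragraph, specifically the verification that the sets $Z'\setminus F$ remain inside $Z[C_c(X)_F]$; once the cofinite sets are recognised as zero sets of finitely supported characteristic functions, the remainder is routine Zorn's-lemma bookkeeping.
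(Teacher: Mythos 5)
Your proof is correct, and your reading of the final clause as a statement about $f\in J_1$ (rather than all of $C_c(X)_F$, which would be false for a nonzero constant on an uncountable space) matches what the paper's own proof actually establishes. Your route, however, is genuinely different: you work entirely on the filter side of the duality in Theorem \ref{SMP3,3.2}, whereas the paper works on the ideal side via the socle. For the inclusion $J_1\subseteq\bigcap M$ the paper notes that an essential maximal ideal contains $\chi_{Z(m)}$ (since the socle sits inside every essential ideal) and hence would contain the unit $m+\chi_{Z(m)}$; you instead use freeness of $Z[M]$ to intersect the finite set $Z(m)$ down to $\emptyset$ inside the filter. For the reverse inclusion the paper shows that $\mathrm{Soc}(C_c(X)_F)+\langle 1-gf\rangle$ is a proper essential ideal and extends it to an essential maximal ideal, while your key lemma extends the infinite zero set $Z(1-fg)$ to a free $(\mathcal{Z}_c)_F$-ultrafilter and exploits $Z(1-fg)\cap Z(f)=\emptyset$; the two constructions are dual to one another and both are sound. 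For the countability of $COZ(f)$ the paper composes $f$ with a continuous extension of $x\mapsto 1/x$ to place $F_n$ inside some $Z(1-hf)$, whereas you extract from your lemma the stronger fact that $COZ(f)$ contains no infinite zero set and apply it to the zero sets $F_n=\{x:|f(x)|\geq 1/n\}$ directly. Your version buys independence from the socle machinery and from the composition trick, at the cost of the (correctly verified) observation that cofinite sets are zero sets of the functions $\chi_F$; the paper's version keeps the argument in purely ring-theoretic language.
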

	\begin{proof}
		Let $f\in J_1$ and $M$ be an essential maximal ideal in $C_c(X)_F$ such that it does not contain $f$. Then for some $g\in C_c(X)_F$ and $m\in M$, we have $gf+m=1$. This implies that $m=1-gf$ and hence $Z(m)$ is finite. Take $h=m+\chi_{Z(m)}$. Then $h\in M$, since $m\in M$ and $\chi_{Z(m)}\in Soc(C_c(X)_F)\subseteq M$. On the other hand $h$ is invertible, a contradiction. Thus $f$ belongs to each essential maximal ideal of $C_c(X)_F$. Therefore $J_1\subseteq \bigcap\limits_{M\in S} M$, where $S$ is  the collection of all essential maximal ideals of $C_c(X)_F$. Next, let $f$ be any element in the intersection of all essential maximal ideals of $C_c(X)_F$. Let $g\in C_c(X)_F$ be such that $Z(1-gf)$ infinite. This implies that for any $s\in Soc(C_c(X)_F)$ and any $t\in C_c(X)_F$, the function $s+t(1-gf)$ has a zero and thus it can not be equal to $1$. Then the ideal $Soc(C_c(X)_F)+<1-gf>$ is a proper essential ideal. Thus there exists an essential maximal ideal $M$ containing it. Therefore $1-gf\in M$ and $f\in M$, a contradiction. Hence $Z(1-gf)$ is finite. This completes the proof.
		
		For the second part, we define $F_n=\{x\in X:\vert f(x)\vert\geq \frac{1}{n}\}$, for each $n\in \mathbb{N}$. Since $COZ(f)=\bigcup\limits_{n=1}^\infty F_n$, it is enough to show that $F_n$ is a finite set for any $n\in \mathbb{N}$. If possible, let $F_n$ be infinite for some $n\in \mathbb{N}$. Let $g:\mathbb{R}\rightarrow \mathbb{R}$  be a continuous function such that $g(x)=\frac{1}{x}$ if $\vert x\vert\geq\frac{1}{n}$ and take $h=g\circ f$. Then $h\in C_c(X)_F$ and $F_n\subseteq Z(1-hf)$. This implies that $Z(1-fh)$ is infinite, a contradiction.
	\end{proof}
	
	\section{Structure space of $C_c(X)_F$}
	
	Let $Max(C_c(X)_F)$ be the structure space of $C_c(X)_F$ i.e., $Max(C_c(X)_F)$ is the set of all maximal ideals of $C_c(X)_F$ equipped with hull-kernel topology. Then $\{\mathcal{M}_f:f\in C_c(X)_F\}$ form a base for closed sets with this hull-kernel topology (see 7M \cite{GJ}), where $\mathcal{M}_f=\{M\in C_c(X)_F:f\in M\}$. Using Theorem 1.2 of \cite{GA(1971)}, we have $Max(C_c(X)_F)$ is a Hausdorff compact space. It is checked that the structure space of $C_c(X)_F$ is identical with the set of all $(\mathcal{Z}_c)_F$-ultrafilters on $X$ with Stone topology.
	
	Let $\beta_\circ^F X$ be an index set for the family of all $(\mathcal{Z}_c)_F$-ultrafilters on X i.e., for each $p\in\beta_\circ^F X$, there exists a $(\mathcal{Z}_c)_F$-ultrafilter on $X$, which is denoted by $\mathcal{U}^p$. For any $p\in X$, we can find a fixed $(\mathcal{Z}_c)_F$-ultrafilter $\mathcal{U}_p$ and set $\mathcal{U}_p=\mathcal{U}^p$. Then we can think $X$ is a subset of $\beta_\circ^F X$.
	
	Now we wish to define a topology on $\beta_\circ^F X$. Let $\beta=\{\overline{Z}:Z\in Z[C_c(X)_F]\}$, where $\overline{Z}=\{p\in\beta_\circ^F X: Z\in \mathcal{U}^p\}$. Then $\beta$ is a base for closed sets for some topology on $\beta_\circ^F X$. Since X belongs to every $(\mathcal{Z}_c)_F$-ultrafilters on $X$, then we have $\overline{X}=\beta_\circ^F X$. We can easily check that $\overline{Z}\cap X=Z$ and for $Z_1,Z_2\in Z[C_c(X)_F]$ with $Z_1\subseteq Z_2$, then $\overline{Z_1}\subseteq\overline{Z_2}$. This leads to the following result.
	
	\begin{theorem}
		For $Z\in Z[C_c(X)_F]$, $\overline{Z}=Cl_{\beta_\circ^F X}{Z}$.
	\end{theorem}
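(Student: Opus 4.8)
The plan is to establish the two inclusions separately, relying on three facts recorded immediately before the statement: that $\beta=\{\overline{Z}:Z\in Z[C_c(X)_F]\}$ is a base for the closed sets of $\beta_\circ^F X$; that $\overline{Z}\cap X=Z$ for every $Z\in Z[C_c(X)_F]$; and that $Z_1\subseteq Z_2$ implies $\overline{Z_1}\subseteq\overline{Z_2}$.

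For the inclusion $Cl_{\beta_\circ^F X}Z\subseteq\overline{Z}$ I would simply observe that $\overline{Z}$ is a member of $\beta$, hence a closed subset of $\beta_\circ^F X$, and that it contains $Z$, since $Z=\overline{Z}\cap X\subseteq\overline{Z}$. As $Cl_{\beta_\circ^F X}Z$ is by definition the smallest closed set containing $Z$, this inclusion is immediate.

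For the reverse inclusion I would exploit that $\beta$ is a base for closed sets, which yields $Cl_{\beta_\circ^F X}Z=\bigcap\{\overline{Z'}:Z'\in Z[C_c(X)_F],\ Z\subseteq\overline{Z'}\}$: every closed set containing $Z$ is an intersection of members of $\beta$, and each such member must itself contain $Z$. It therefore suffices to show $\overline{Z}\subseteq\overline{Z'}$ whenever $Z\subseteq\overline{Z'}$. Given such a $Z'$, intersecting with $X$ and using $\overline{Z'}\cap X=Z'$ together with $Z\subseteq X$ gives $Z=Z\cap X\subseteq\overline{Z'}\cap X=Z'$, so $Z\subseteq Z'$; the monotonicity of the overline operation then delivers $\overline{Z}\subseteq\overline{Z'}$. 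Taking the intersection over all admissible $Z'$ gives $\overline{Z}\subseteq Cl_{\beta_\circ^F X}Z$, and combining the two inclusions completes the proof.

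The argument is formal once the three preliminary facts are available, and I expect no genuine obstacle. The only step warranting care is the reduction $Z\subseteq\overline{Z'}\Rightarrow Z\subseteq Z'$: one should compare the traces on $X$ via $\overline{Z'}\cap X=Z'$ rather than attempt to relate $\overline{Z}$ and $\overline{Z'}$ directly inside $\beta_\circ^F X$. This is the standard Stone--\v{C}ech-type computation adapted to the $(\mathcal{Z}_c)_F$-ultrafilter framework.
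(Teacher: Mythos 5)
Your proof is correct and follows essentially the same route as the paper's: both arguments show that any basic closed set $\overline{Z'}$ containing $Z$ satisfies $Z\subseteq\overline{Z'}\cap X=Z'$, hence $\overline{Z}\subseteq\overline{Z'}$ by monotonicity, so $\overline{Z}$ is the smallest (basic) closed set containing $Z$. Your write-up merely makes explicit the step that the closure is the intersection of the basic closed sets containing $Z$, which the paper leaves implicit.
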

	
	\begin{proof}
		Let $Z\in Z[C_c(X)_F]$ and $\overline{{{Z_1}}}\in\beta$ be such that $Z\subseteq\overline{Z_1}$. Then $Z\subseteq\overline{Z_1}\cap X=Z_1$. This implies $\overline{Z}\subseteq\overline{Z_1}$. Therefore $\overline{Z}$ is the smallest basic closed set containing Z. Hence $\overline{Z}=Cl_{\beta_\circ^F X}Z$.
	\end{proof}
	
	\begin{corollary}
		$X$ is a dense subset of $\beta_\circ^F X$.
	\end{corollary}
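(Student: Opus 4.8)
The plan is to derive the corollary directly from the preceding theorem by specializing the zero set to all of $X$. First I would observe that $X$ itself is a member of $Z[C_c(X)_F]$: indeed, by Properties (ii) we have $X = Z(\underline{0})$, so $X$ is genuinely a zero set of the ring. This is the only ingredient needed to bring the theorem to bear.

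Next I would apply the theorem with $Z = X$, which yields $\overline{X} = Cl_{\beta_\circ^F X} X$. It therefore remains only to identify $\overline{X}$ with the whole space $\beta_\circ^F X$, and this identification is exactly the observation already recorded during the construction of the topology: since $X$ belongs to every $(\mathcal{Z}_c)_F$-ultrafilter on $X$, each index $p \in \beta_\circ^F X$ satisfies $X \in \mathcal{U}^p$, whence $p \in \overline{X}$ by the very definition of $\overline{X}$. Thus $\overline{X} = \beta_\circ^F X$.

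Combining the two equalities $Cl_{\beta_\circ^F X} X = \overline{X}$ and $\overline{X} = \beta_\circ^F X$ gives $Cl_{\beta_\circ^F X} X = \beta_\circ^F X$, which is precisely the assertion that $X$ is dense in $\beta_\circ^F X$.

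I do not anticipate any genuine obstacle: the corollary is a one-line consequence of the theorem. The only points that deserve (minimal) attention are verifying that $X$ really lies in $Z[C_c(X)_F]$, so that the theorem is applicable, and recalling that the property ``$X$ belongs to every $(\mathcal{Z}_c)_F$-ultrafilter'' has already been established earlier in the section; both facts are immediate from the preceding material in the excerpt.
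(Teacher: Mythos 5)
Your proposal is correct and follows essentially the same route as the paper: the paper's own one-line proof also rests on the observation that $X$ belongs to every $(\mathcal{Z}_c)_F$-ultrafilter, hence $\overline{X}=\beta_\circ^F X$, combined (implicitly) with the preceding theorem identifying $\overline{X}$ with $Cl_{\beta_\circ^F X}X$. Your version merely makes explicit the minor points (that $X=Z(\underline{0})\in Z[C_c(X)_F]$ and the appeal to the theorem) that the paper leaves tacit.
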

	\begin{proof}
		Since $X$ is a member of every $(\mathcal{Z}_c)_F$-ultrafilter on $X$, it implies that $\overline{X}=\beta_\circ^F X$.
	\end{proof}
	
	Now, we want to show that $Max(C_c(X)_F)$ and $\beta_\circ^F X$ are homeomorphic.
	
	\begin{theorem}\label{SMP3,4.3}
		The map $\phi: Max(C_c(X)_F)\rightarrow \beta_\circ^F X$, defined by  $\phi(M)= p$ is a homeomorphism, where $Z[M]=\mathcal{U}^p$.
	\end{theorem}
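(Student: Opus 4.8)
The plan is to exploit the fact that both topologies are presented through bases for their closed sets, so that $\phi$ will be a homeomorphism as soon as it is a bijection carrying one closed base onto the other. Bijectivity of $\phi$ is immediate: by Theorem \ref{SMP3,3.2}(ii) the assignment $M\mapsto Z[M]$ is a bijection from the maximal ideals onto the $(\mathcal{Z}_c)_F$-ultrafilters, and by construction $p\mapsto\mathcal{U}^p$ is a bijection from $\beta_\circ^F X$ onto the same collection of ultrafilters; hence $\phi$, the composite sending $M$ to the unique $p$ with $Z[M]=\mathcal{U}^p$, is well defined and bijective. The whole proof then reduces to the single identity $\phi(\mathcal{M}_f)=\overline{Z(f)}$ for every $f\in C_c(X)_F$, where $\mathcal{M}_f=\{M:f\in M\}$ ranges over the closed base of $Max(C_c(X)_F)$ and $\overline{Z(f)}$ ranges over the closed base $\beta$ of $\beta_\circ^F X$.

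The key step is the translation of ideal membership into filter membership. Since every maximal ideal $M$ is a $(\mathcal{Z}_c)_F$-ideal, I claim that for $f\in C_c(X)_F$ one has $f\in M$ if and only if $Z(f)\in Z[M]$. The forward implication is trivial from the definition of $Z[M]$. For the converse, if $Z(f)\in Z[M]$ then $Z(f)=Z(g)$ for some $g\in M$, so $Z(g)\subseteq Z(f)$, and Corollary \ref{SMP3,3.6} applied to the $(\mathcal{Z}_c)_F$-ideal $M$ forces $f\in M$. This equivalence is really the crux of the argument; everything else is bookkeeping about bases.

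With this in hand I would assemble the identity as follows. Writing $\phi(M)=p$ with $Z[M]=\mathcal{U}^p$, the chain of equivalences $f\in M \iff Z(f)\in Z[M]=\mathcal{U}^p \iff p\in\overline{Z(f)}$ shows simultaneously that $\phi(\mathcal{M}_f)=\overline{Z(f)}$ and $\phi^{-1}(\overline{Z(f)})=\mathcal{M}_f$. The first equality says $\phi$ sends each basic closed set of $Max(C_c(X)_F)$ onto a basic closed set of $\beta_\circ^F X$; since $\phi$ is a bijection and every closed set is an intersection of basic ones, $\phi$ is a closed map. The second equality says the $\phi$-preimage of each member of the closed base $\beta$ is closed, so $\phi$ is continuous. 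A continuous closed bijection is a homeomorphism, which finishes the proof.

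I do not expect a serious obstacle here: the only points requiring care are to confirm that verifying the correspondence on the two closed bases is genuinely enough, and to keep straight that $Z[C_c(X)_F]=\{Z(f):f\in C_c(X)_F\}$ so that the base $\beta=\{\overline{Z}:Z\in Z[C_c(X)_F]\}$ is exactly $\{\overline{Z(f)}:f\in C_c(X)_F\}$, matching it index-for-index against $\{\mathcal{M}_f\}$. The membership equivalence $f\in M\iff Z(f)\in Z[M]$, resting on $M$ being a $(\mathcal{Z}_c)_F$-ideal, is the one substantive ingredient and is supplied by Corollary \ref{SMP3,3.6}.
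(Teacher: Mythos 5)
Your proposal is correct and follows essentially the same route as the paper's own proof: bijectivity from Theorem \ref{SMP3,3.2}(ii), followed by the chain of equivalences $f\in M \Leftrightarrow Z(f)\in Z[M]=\mathcal{U}^p \Leftrightarrow p\in\overline{Z(f)}$ giving $\phi(\mathcal{M}_f)=\overline{Z(f)}$, so that $\phi$ interchanges the two closed bases. You merely spell out two points the paper leaves implicit, namely the use of Corollary \ref{SMP3,3.6} for the converse implication and the justification that matching closed bases under a bijection yields a homeomorphism.
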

	
	\begin{proof} 
		The map $\phi$ is  bijective  by Theorem \ref{SMP3,3.2} (ii). Basic closed set of $Max(C_c(X)_F)$ is of the form $\mathcal{M}_f=\{M\in Max(C_c(X)_F):f\in M\}$, for some $f\in C_c(X)_F$. Now $M\in \mathcal{M}_f \Leftrightarrow f\in M\Leftrightarrow Z(f)\in Z[M]$ (Since maximal ideal is a $(\mathcal{Z}_c)_F$-ideal$)\Leftrightarrow Z(f)\in\mathcal{U}^p \Leftrightarrow p\in\overline{Z(f)}$. Thus $\phi(\mathcal{M}_f)=\overline{Z(f)}$. Therefore $\phi$ interchanges basic closed sets of $Max(C_c(X)_F)$ and $\beta_\circ^F X$. Hence $Max(C_c(X)_F)$ is homeomorphic to $\beta_\circ^F X$.
	\end{proof}
	
	Now we prove the following theorem which is an analogous version of the Gelfand-Kolmogoroff Theorem 7.3 \cite{GJ}.  
	
	\begin{theorem}\label{SMP3,4.4}
		Every maximal ideal of $C_c(X)_F$ is of the form $M^p=\{f\in C_c(X)_F:p\in Cl_{\beta_\circ^F X}Z(f)\}$, for some $p\in\beta_\circ^F X$.
	\end{theorem}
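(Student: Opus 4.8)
The plan is to reduce the statement to the identification of maximal ideals with $(\mathcal{Z}_c)_F$-ultrafilters already furnished by Theorem \ref{SMP3,3.2}(ii), combined with the description of closures in $\beta_\circ^F X$. First I would fix an arbitrary maximal ideal $M$ of $C_c(X)_F$. By Theorem \ref{SMP3,3.2}(ii), $Z[M]$ is a $(\mathcal{Z}_c)_F$-ultrafilter on $X$; since $\beta_\circ^F X$ was introduced precisely as an index set for all such ultrafilters, there is a (unique) $p\in\beta_\circ^F X$ with $Z[M]=\mathcal{U}^p$. This $p$ is the candidate for which I claim $M=M^p$.

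Next I would unwind the defining condition of $M^p$. For any $f\in C_c(X)_F$ we have $Z(f)\in Z[C_c(X)_F]$, so the earlier theorem asserting $\overline{Z}=Cl_{\beta_\circ^F X}Z$ for zero sets gives $Cl_{\beta_\circ^F X}Z(f)=\overline{Z(f)}=\{q\in\beta_\circ^F X:Z(f)\in\mathcal{U}^q\}$. Consequently $p\in Cl_{\beta_\circ^F X}Z(f)$ is equivalent to $Z(f)\in\mathcal{U}^p=Z[M]$. Reading this back into the definition of $M^p$ yields $M^p=\{f\in C_c(X)_F:Z(f)\in Z[M]\}=Z^{-1}Z[M]$.

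Finally I would invoke the fact, recorded just after the definition of $(\mathcal{Z}_c)_F$-ideal, that every maximal ideal of $C_c(X)_F$ is a $(\mathcal{Z}_c)_F$-ideal, that is $Z^{-1}Z[M]=M$. Combining with the previous step gives $M=Z^{-1}Z[M]=M^p$, which is the asserted form. I do not anticipate a genuine obstacle: the argument is almost entirely bookkeeping with the filter--ideal correspondence and the closure formula $\overline{Z}=Cl_{\beta_\circ^F X}Z$. The only point needing a little care is that the translation $p\in Cl_{\beta_\circ^F X}Z(f)\iff Z(f)\in\mathcal{U}^p$ uses the closure description for the \emph{specific} set $Z(f)$, which is legitimate exactly because $Z(f)$ lies in $Z[C_c(X)_F]$ and hence falls under the hypothesis of that theorem.
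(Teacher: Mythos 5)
Your proposal is correct and follows essentially the same route as the paper: identify $Z[M]$ with an ultrafilter $\mathcal{U}^p$ via Theorem \ref{SMP3,3.2}(ii), translate $p\in Cl_{\beta_\circ^F X}Z(f)$ into $Z(f)\in\mathcal{U}^p$ using $\overline{Z}=Cl_{\beta_\circ^F X}Z$, and conclude with the fact that maximal ideals are $(\mathcal{Z}_c)_F$-ideals, so $Z^{-1}Z[M]=M$. No gaps; the bookkeeping matches the paper's chain of equivalences exactly.
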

	
	\begin{proof}
		Let $M$ be any maximal ideal of $C_c(X)_F$. Then $Z[M]$ is a $(\mathcal{Z}_c)_F$-ultrafilter on $X$. Thus $Z[M]=\mathcal{U}^p$, for some $p\in \beta_\circ^F X$. So, $f\in M \Leftrightarrow Z(f)\in Z[M]$ as $M$ is a $(\mathcal{Z}_c)_F$-ideal $ \Leftrightarrow Z(f)\in Z[M]=\mathcal{U}^p \Leftrightarrow p\in \overline{Z(f)}=Cl_{\beta_\circ^F X}Z(f)$. Hence $M=\{f\in C_c(X)_F:p\in Cl_{\beta_\circ^F X} Z(f)\}$ and so we can write $\{f\in C_c(X)_F:p\in Cl_{\beta_\circ^F X} Z(f)\}=M^p$, $p\in \beta_\circ^F X$. This completes the proof.
	\end{proof}
	
	We know that the structure space of $C_c(X)$ is homeomorphic to $\beta_\circ X$ ($\equiv$ the Banaschewski compactification of a zero dimensional space $X$). Also, it is interesting to note that the structure space of $C_c(X)$ and the structure space of $C_c(X)_F$  are same if $X$ is equipped with discrete topology. The following example  shows that  these spaces may not be homeomorphic  to each other. 
	
	\begin{example}\label{SMP3,4.5}
		Take $X=\{\frac{1}{n}:n\in\mathbb{N}\}\cup\{0\}$. Consider $(X,\tau_u)$, where $\tau_u$ is the subspace topology on $X$ of the real line. Since $X$ is a zero dimensional space (it has a base of clopen sets), then the Stone-$\check{C}$ech compactification of $X$, $\beta X=\beta_\circ X$ (see \cite{JR(1988)}, subsection 4.7). Again $X$ is a compact space implies $X$ is homeomorphic to $\beta X=\beta_\circ X$. On the other hand since $X$ contains only one non-isolated point and it is a countable set, $C_c(X)_F=\mathbb{R}^X= C(X,\tau_d)$, where $C(X,\tau_d)$ is the rings of continuous functions with discrete topology $\tau_d$. Hence $\beta_\circ^F X$ is the Stone-$\check{C}$ech compactification of $X$ equipped with the discrete topology and the cardinality of $\beta_\circ^F X$  is equal to $\vert \beta \mathbb{N}\vert =2^c$ (see 9.3 in \cite{GJ}), where $\beta\mathbb{N}$ is the  Stone-$\check{C}$ech compactification of the set $\mathbb{N}$ of all natural numbers. Now the cardinality of $\beta X$ is $\aleph_\circ$ implies that $\beta_\circ^F X$ is not homeomorphic to $\beta X=\beta_\circ X$.
	\end{example}

	\section{$C_c(X)_F$ and $C_c(X)$}
	In this section,  we shall discuss relation between $C_c(X)_F$ and $C_c(X)$. It is interesting to see that the  ring $C_c(X)_F$ properly contains the ring $C_c(X)$. In fact, for any topological space $X$, let $x$ be a non-isolated point of $X$. Then $\chi_{\{x\}}\in C_c(X)_F$, but $\chi_{\{x\}}\not \in C_c(X)$. \\
	 
	Now we recall that an over ring $S$ of a reduced ring $R$ is called a quotient ring of $R$ if for any non-zero element $s\in S$, there is an element $r\in R$ such that $0\neq sr\in R$.
	\begin{theorem}
		For any topological space $X$, the following statements are equivalent.
		\begin{itemize}
			\item[(i)] $C_c(X)_F=C_c(X)$.
			\item[(ii)] $X$ is a discrete space.
			\item[(iii)] $C_c(X)_F$ is a quotient ring of $C_c(X)$.
		\end{itemize}
	\end{theorem}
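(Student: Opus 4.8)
The plan is to establish the cycle of implications $(ii)\Rightarrow(i)\Rightarrow(iii)\Rightarrow(ii)$, which delivers all three equivalences at once. Only the last implication carries any real content; the other two become immediate once the right observation is isolated.

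For $(ii)\Rightarrow(i)$, I would use that on a discrete space every real-valued function is continuous. Hence if $f\in C_c(X)_F$ is continuous off some finite set and has countable range, then $f$ is in fact continuous on all of $X$, so $f\in C_c(X)$. This gives $C_c(X)_F\subseteq C_c(X)$, and since the reverse inclusion $C_c(X)\subseteq C_c(X)_F$ always holds (as recorded at the start of this section), the two rings coincide. For $(i)\Rightarrow(iii)$, the hypothesis makes $C_c(X)_F$ literally equal to $C_c(X)$, so the quotient-ring condition is satisfied trivially: given any nonzero $s\in C_c(X)_F=C_c(X)$, take $r=\underline{1}\in C_c(X)$, whence $0\neq s\cdot\underline{1}=s\in C_c(X)$.

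The substance of the proof lies in $(iii)\Rightarrow(ii)$, which I would argue by contraposition. Suppose $X$ is not discrete and fix a non-isolated point $x\in X$. Then $s:=\chi_{\{x\}}$ lies in $C_c(X)_F$ (its range is $\{0,1\}$ and its only point of discontinuity is $x$) but not in $C_c(X)$, since $\chi_{\{x\}}$ fails to be continuous at the non-isolated point $x$. For any $r\in C_c(X)$ one computes $s\cdot r=r(x)\,\chi_{\{x\}}$, because $(\chi_{\{x\}}r)(y)$ equals $r(x)$ when $y=x$ and $0$ otherwise. If $s\cdot r\neq 0$, then $r(x)\neq 0$, so $s\cdot r$ is a nonzero scalar multiple of $\chi_{\{x\}}$ and is again discontinuous at $x$, forcing $s\cdot r\notin C_c(X)$. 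Consequently no $r\in C_c(X)$ can realize $0\neq s\cdot r\in C_c(X)$, so $C_c(X)_F$ is not a quotient ring of $C_c(X)$, contradicting $(iii)$.

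The hard part is precisely this last step, and the key point that makes it run is structural rather than computational: multiplying the indicator of a singleton by any function merely rescales that indicator on the single point $x$, so the product can never be promoted to a continuous function at a non-isolated point. I would present the three implications in the order above so that the trivial reductions are dispatched first and the characteristic-function argument stands alone as the decisive case.
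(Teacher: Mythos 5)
Your proposal is correct and follows essentially the same route as the paper: the decisive step in both is the computation $\chi_{\{x\}}\cdot r = r(x)\,\chi_{\{x\}}$, showing that multiplying the indicator of a non-isolated singleton by any continuous function can never land in $C_c(X)$ nontrivially. The only difference is cosmetic --- you arrange the implications as a cycle $(ii)\Rightarrow(i)\Rightarrow(iii)\Rightarrow(ii)$ and argue $(iii)\Rightarrow(ii)$ by contraposition, whereas the paper proves $(i)\Leftrightarrow(ii)$ and $(iii)\Rightarrow(ii)$ directly with the same characteristic-function argument.
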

	
	\begin{proof}
		$(i)\Leftrightarrow (ii)$ Let $X$ be a discrete space, then clearly $C_c(X)_F=C_c(X)$. Next we assume that $C_c(X)_F=C_c(X)$. Then for each $x\in X$, $\chi_{\{x\}}\in C_c(X)_F=C_c(X)$, a continuous map. Then $\{x\}$ is an isolated point. Therefore $X$ is a discrete space.
		
		$(ii)\Rightarrow (iii)$ It is trivial.
		
		$(iii)\Rightarrow (ii)$ Let $x_\circ\in X$. Then $\chi_{\{x_\circ\}}\in C_c(X)_F$. Then by given hypothesis there exists a function $f\in C_c(X)$ such that $0\neq f\cdot\chi_{\{x_\circ\}}\in C_c(X)$. Now $f(x)\chi_{\{x_\circ\}}(x)=f(x_\circ)\chi_{\{x_\circ\}}(x)$ for all $x\in X$. Hence $\chi_{\{x_\circ\}}$ is a continuous function. This implies that $\{x_\circ\}$ is an isolated point. Therefore $X$ is a discrete space.
	\end{proof}
	
	\begin{lemma}\label{SMP3,5.2}
		Let $\phi:C_c(X)_F\rightarrow C_c(Y)$ be a ring isomorphism, where $X$ and $Y$ are  two topological spaces. Then the following statements are true.
		\begin{itemize}
			\item [(i)] Both $\phi$ and $\phi^{-1}$ are order preserving function.
			\item[(ii)] Both $\phi$ and $\phi^{-1}$ preserve constant functions (and their values).
			\item [(iii)] For any $x_\circ\in X$ there is an element $y_\circ\in Y$ such that $\phi(\chi_{\{x_\circ\}})=\chi_{\{y_\circ\}}$ and $\phi(f)(y_\circ)=f(x_\circ)$ for any $f\in C_c(X)_F$.
		\end{itemize}
	\end{lemma}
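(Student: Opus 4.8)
The plan is to exploit the fact that, in both $C_c(X)_F$ and $C_c(Y)$, the lattice order is encoded purely algebraically by squares. First I would record the key observation for part (i): an element $h$ is non-negative if and only if it is a square in the ring. Indeed, if $h\geq 0$ in $C_c(X)_F$, then $k(x)=\sqrt{h(x)}$ again has countable range and is discontinuous only where $h$ is (since $t\mapsto\sqrt{t}$ is continuous on $[0,\infty)$), so $k\in C_c(X)_F$ and $k^2=h$; conversely every square is $\geq 0$. The identical statement holds in $C_c(Y)$. Since a ring isomorphism carries squares to squares, $\phi$ and $\phi^{-1}$ map the positive cone onto the positive cone; applying this to $f-g$ gives $f\geq g\Leftrightarrow\phi(f)\geq\phi(g)$, which is (i).

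For part (ii) I would first note that $\phi(\underline{1})=\underline{1}$ because $\phi$ preserves the multiplicative identity, whence $\phi(\underline{n})=\underline{n}$ for every integer $n$ by additivity, and then $\phi(\underline{p/q})=\underline{p/q}$ for rationals, since $\phi(\underline{p/q})$ is the unique solution $u$ of $qu=\underline{p}$, namely $u=\underline{p/q}$. To pass from $\mathbb{Q}$ to $\mathbb{R}$ I would use the order-preservation from (i): for a real $r$ and rationals $p<r<q$ we have $\underline{p}\leq\underline{r}\leq\underline{q}$, so $\underline{p}\leq\phi(\underline{r})\leq\underline{q}$, i.e. $p\leq\phi(\underline{r})(y)\leq q$ for every $y\in Y$; letting $p\uparrow r$ and $q\downarrow r$ forces $\phi(\underline{r})(y)=r$, that is $\phi(\underline{r})=\underline{r}$. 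The same argument applied to $\phi^{-1}$ finishes (ii).

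Part (iii) is where the main work lies. The element $\chi_{\{x_\circ\}}$ is a non-zero idempotent, and by the earlier proposition it generates a minimal ideal of $C_c(X)_F$. A ring isomorphism sends idempotents to idempotents and minimal ideals to minimal ideals, so $e=\phi(\chi_{\{x_\circ\}})$ is a non-zero idempotent of $C_c(Y)$ generating a minimal ideal. The crux is to identify $e$ as the characteristic function of a single point: writing $e=\chi_S$ for a clopen $S\subseteq Y$, minimality of $\langle e\rangle$ in the reduced ring $C_c(Y)$ makes $eC_c(Y)$ a field, and evaluation at a point of $S$ embeds this field into $\mathbb{R}$ and forces $eC_c(Y)=\mathbb{R}e=\mathbb{R}\chi_{\{y_\circ\}}$ for a (necessarily isolated) point $y_\circ$ — the analogue for $C_c(Y)$ of the proposition on minimal ideals. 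Since the only idempotents of $\mathbb{R}\chi_{\{y_\circ\}}$ are $0$ and $\chi_{\{y_\circ\}}$, the non-zero idempotent $e$ equals $\chi_{\{y_\circ\}}$.

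Finally, to obtain $\phi(f)(y_\circ)=f(x_\circ)$ I would start from the pointwise identity $f\cdot\chi_{\{x_\circ\}}=\underline{f(x_\circ)}\,\chi_{\{x_\circ\}}$ in $C_c(X)_F$ and apply $\phi$. Using part (ii) to see $\phi(\underline{f(x_\circ)})=\underline{f(x_\circ)}$ together with the identification $\phi(\chi_{\{x_\circ\}})=\chi_{\{y_\circ\}}$, this yields $\phi(f)\,\chi_{\{y_\circ\}}=\underline{f(x_\circ)}\,\chi_{\{y_\circ\}}$; comparing values at $y_\circ$, where $\phi(f)\,\chi_{\{y_\circ\}}=\phi(f)(y_\circ)\,\chi_{\{y_\circ\}}$, gives $\phi(f)(y_\circ)=f(x_\circ)$. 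I expect the genuine obstacle to be the middle step of (iii), namely showing that the image idempotent is supported at a single point of $Y$; everything else is formal manipulation resting on (i) and (ii).
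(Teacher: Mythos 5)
Your parts (i) and (ii) are essentially the paper's own argument, just with the details written out (existence of the square root inside $C_c(X)_F$, rationals first and then density plus order-preservation for real constants). Part (iii) is where you diverge, and it is also where your argument is thinnest. The paper works directly: if the clopen support $A$ of $e=\phi(\chi_{\{x_\circ\}})$ contained two points $y,z$, it takes a continuous $f$ with $f(y)=0$, $f(z)=1$, forms $g=\min\{f,\chi_A\}$, pulls $0\le g\le\chi_A$ back through $\phi^{-1}$ to force $g=k\chi_A$, and derives $k=0$ and $k=1$. You instead transport the minimal ideal $\langle\chi_{\{x_\circ\}}\rangle$, invoke Brauer's lemma to make $eC_c(Y)$ a field, and use evaluation to get $eC_c(Y)=\mathbb{R}e$. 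That much is fine, but the next clause, ``$=\mathbb{R}\chi_{\{y_\circ\}}$ for a point $y_\circ$,'' is exactly the statement to be proved, not a consequence: knowing that every member of $C_c(Y)$ is constant on the clopen set $S$ does not make $S$ a singleton unless $C_c(Y)$ separates the points of $S$ (if $Y$ were connected with $C_c(Y)=\mathbb{R}$, then $e=\chi_Y$ generates a minimal ideal which is a field, yet $Y$ is not a point). So you still owe the separation step — say, zero-dimensionality of $Y$ producing a nonconstant idempotent supported in $S$. In fairness, the paper's proof leans on the same unstated hypothesis (it needs the separating $f$ to exist and needs $\min\{f,\chi_A\}$ to lie in $C_c(Y)$, i.e.\ to have countable range), so your route is no worse, only differently packaged. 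Your concluding step — applying $\phi$ to $f\cdot\chi_{\{x_\circ\}}=\underline{f(x_\circ)}\,\chi_{\{x_\circ\}}$ and evaluating at $y_\circ$ — is correct given (ii) and the idempotent identification, and is cleaner than the paper's order-inequality contradiction.
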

	
	\begin{proof}
		(i) Let $f\in C_c(X)_F$ and $f\geq 0$, then there exists an element $g\in C_c(X)_F$ such that $f=g^2$. Hence $\phi(f)=(\phi(g))^2\geq 0$. Thus $\phi$ is order preserving. Similarly, $\phi^{-1}$ is also order preserving.
		
		(ii) We can easily check that $\phi(1)=1$ and use this we can calculate that $\phi$ maps any constant function with rational value to same constant and together with the order preserving property of $\phi$ shows that $\phi$ preserves constant functions. Similarly, constant functions are preserved by $\phi^{-1}$.
		
		(ii) For $x\in X$, $\phi(\chi_{\{x_\circ\}})$ is an idempotent element of $C_c(Y)$. Then there is a clopen subset $A$ of $Y$ such that $\phi(\chi_{\{x_\circ\}})=\chi_A$. Clearly, $A$ is non-empty. Now, we prove that $A$ is a singleton set. If possible, let $y,z\in A$. Then there exists a continuous function $f:Y\rightarrow [0,1]$ such that $f(y)=0$ and $f(z)=1$. Take  $g=min\{f,\chi_A\}\in C_c(Y)$. Then $0\leq g\leq \chi_A$ and we have $0\leq \phi^{-1}(g)\leq\chi_{\{x\}}$, consequently $\phi^{-1}(g)=k\chi_{\{x\}}$ for some real number $k$. Hence $g=k \chi_A$. Now $g(y)=k \chi_A(y)$ implies that $k=0$ and $g(z)=\chi_A(z)$ implies that $k=1$, a contradiction. Therefore $A$ is a singleton set.
		
		To prove the second part, let $x_\circ\in X$, $f\in C_c(X)_F$ and $\phi(\chi_{\{x_\circ\}})=\chi_{\{y_\circ\}}$ for some $y_\circ \in Y$. If possible, let $\phi(f-f(x_\circ))(y_\circ)\neq 0$. Then we have $\phi(f-f(x_\circ))^2\geq \frac{d^2}{3}\chi_{\{y_\circ\}}=\phi(\frac{d^2}{3}\chi_{\{x_\circ\}})$, where $d=\phi(f-f(x_\circ))(y_\circ)$. Thus $(f-f(x_\circ))^2\geq \frac{d^2}{3}\chi_{\{x_\circ\}}$, contradiction when evaluated at $x_\circ$. Therefore $\phi(f)(y_\circ)=f(x_\circ)$.
	\end{proof}
	
	\begin{theorem}
		For a topological spaces $X$. There exists a topological space $Y$ such that $C_c(X)_F\cong C_c(Y)$ if and only if set of all non-isolated points of $X$ is finite.
	\end{theorem}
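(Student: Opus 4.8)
The plan is to prove the two implications separately, treating sufficiency by an explicit construction and necessity through Lemma \ref{SMP3,5.2} together with the structure-space description of the preceding section.

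For the sufficiency, suppose the set $N$ of non-isolated points of $X$ is finite. I would take $Y=X_d$, the same underlying set equipped with the discrete topology, and claim that $C_c(X)_F=C_c(X_d)=C_c(Y)$ as rings. The inclusion $C_c(X)_F\subseteq C_c(X_d)$ is immediate, since every member of $C_c(X)_F$ has countable range and every function on a discrete space is continuous. For the reverse inclusion, any $g\colon X\to\mathbb{R}$ with countable range is automatically continuous at each isolated point of $X$, so its set of points of discontinuity is contained in $N$, hence finite; therefore $g\in C_c(X)_F$. Thus the identity map is a ring isomorphism and $Y=X_d$ witnesses the claim.

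For the necessity, suppose $\phi\colon C_c(X)_F\to C_c(Y)$ is a ring isomorphism. By Lemma \ref{SMP3,5.2}(iii), for each $x\in X$ there is an isolated point $y_x\in Y$ with $\phi(\chi_{\{x\}})=\chi_{\{y_x\}}$ and $\phi(f)(y_x)=f(x)$ for all $f\in C_c(X)_F$. I would first verify that $x\mapsto y_x$ is a bijection of $X$ onto the set $\mathrm{Iso}(Y)$ of isolated points of $Y$: distinct $x$ give orthogonal minimal idempotents, hence distinct $y_x$, and every isolated $y$ produces the atom $\chi_{\{y\}}=\phi(\chi_{\{x\}})$ for a unique $x$. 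Transporting the fact that $X$ is dense in its structure space through the homeomorphism of Theorem \ref{SMP3,4.3}, and noting that the isolated points of either structure space are exactly $X$ respectively $\mathrm{Iso}(Y)$, I would record that $\mathrm{Iso}(Y)$ is dense in $Y$. Consequently every $g\in C_c(Y)$ is determined by its restriction to $\mathrm{Iso}(Y)$, and under the identification $x\leftrightarrow y_x$ this restriction is precisely the matching element of $C_c(X)_F$; in effect, $C_c(X)_F$ is realized as the ring of those countable-valued functions on $X$ that extend continuously across the non-isolated points of $Y$.

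Now assume, for contradiction, that $N$ is infinite and choose a one-to-one sequence $(x_n)$ in $N$. The decisive step is to exploit continuity on the $Y$-side: in the compact structure space the sequence of isolated points $(y_{x_n})$ has a cluster point $p$, and if $p\in Y$ it is a non-isolated point approached by a subnet $(y_{x_{n_\alpha}})$. Continuity of $\phi(f)$ at $p$ then forces $f(x_{n_\alpha})=\phi(f)(y_{x_{n_\alpha}})\to\phi(f)(p)$ for every $f\in C_c(X)_F$, so $L(f):=\phi(f)(p)$ is a real ring homomorphism annihilating every $\chi_{\{x\}}$ and hence the whole socle, whose kernel is a free maximal ideal realized as a genuine limit of points of $X$. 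I would then construct an $f\in C_c(X)_F$ whose values along $(x_n)$ cannot be reconciled with any such single limit — prescribing $f$ to oscillate on the spread-out set $\{x_n\}$ while remaining continuous off it, which is legitimate precisely because the $x_n$ are non-isolated — thereby contradicting continuity of $\phi(f)$ at $p$, and handling the case $p\in\beta_\circ Y\setminus Y$ symmetrically through the associated free ultrafilter. The main obstacle is exactly this last step: extracting the oscillating witness $f$ from the bare infinitude of $N$ and simultaneously excluding every possible topology on $Y$. The argument must balance two opposing features — that each $x_n$ is non-isolated in $X$ (so that the intended $f$ really lies in $C_c(X)_F$) against the fact that each $y_{x_n}$ is isolated in $Y$ (so that its limiting behaviour is governed by honest continuity) — and it is this tension that the finiteness of $N$ is designed to dissolve.
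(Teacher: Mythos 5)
Your sufficiency argument is correct and complete: when the set $N$ of non-isolated points of $X$ is finite, every countable-range function on $X$ is continuous off $N$, so $C_c(X)_F$ coincides with the ring of all countable-range functions on $X$, i.e.\ with $C_c(X_d)$, and $Y=X_d$ works. This is a different (and in fact cleaner) construction than the paper's, which takes $Y=X\setminus N$ and claims $f\mapsto f|_Y$ is an isomorphism; that restriction map annihilates $\chi_{\{x\}}$ for $x\in N$ and so is not injective as written, whereas your identity map genuinely is an isomorphism.

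The necessity half, however, has a real gap, and it is exactly the step you flag as ``the main obstacle.'' After invoking Lemma \ref{SMP3,5.2}(iii) — the same starting point as the paper — you propose to locate a cluster point $p$ of $(y_{x_n})$ and to build an $f\in C_c(X)_F$ that ``oscillates on $\{x_n\}$ while remaining continuous off it, which is legitimate precisely because the $x_n$ are non-isolated.'' This has the role of non-isolatedness backwards: if $x_n$ is a limit point of the set where $f$ takes other values, then assigning an oscillating value at $x_n$ makes $f$ discontinuous \emph{at} $x_n$, and with infinitely many such points $f$ falls outside $C_c(X)_F$; such a witness exists only when the $x_n$ are topologically separated from one another, which is not given. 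In addition, when the only cluster points of $(y_{x_n})$ lie in $\beta_\circ Y\setminus Y$ there is no continuity constraint to violate, so the ``symmetric'' case is not actually handled. The paper's argument needs no convergence analysis: it forms the single idempotent $g=\chi_{\{y_{x_1},y_{x_2},\dots\}}$ in $C_c(Y)$ (the $y_{x_i}$ being isolated in $Y$), pulls it back to $f=\phi^{-1}(g)$, and uses $\phi(f)(y_x)=f(x)$ together with injectivity of $x\mapsto y_x$ to identify $f=\chi_{\{x_1,x_2,\dots\}}$; after normalizing so that each $x_i$ is a limit point of $X\setminus\{x_1,x_2,\dots\}$, this $f$ is discontinuous at infinitely many points — the contradiction lives entirely on the $X$ side, and non-isolatedness is used to defeat membership in $C_c(X)_F$ rather than to enable it. Your bijection onto $\mathrm{Iso}(Y)$ and the density observation are correct but do not by themselves close the argument; the transported idempotent is the missing ingredient.
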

	
	\begin{proof}
		Let $\{x_1,x_2,\cdots, x_n\}$ be the set of all non-isolated points of $X$. Set $Y=X\setminus\{x_1,x_2,\cdots,x_n\}$. Then for each $f\in C_c(X)_F$, we have $f\vert_Y\in C_c(Y)$ and $f\mapsto f\vert_Y$ is the required isomorphism. For the reverse part, let $\phi:C_c(X)_F\rightarrow C_c(Y)$ be a ring isomorphism. If possible, let $\{x_1,x_2,\cdots\}$ be infinite number of isolated points and without loss of generality for each $i$, $x_i$ be a limit point of $X\setminus\{x_1,x_2,\cdots\}$. Then by Lemma \ref{SMP3,5.2}(iii), we get $\{y_1,y_2,\cdots\}$ such that $\phi(\chi_{\{x_i\}})=\chi_{\{y_i\}}$ for each $i$. Let $g=\sum\limits_{i=1}^\infty\chi_{\{y_i\}}$ (which is an element of $C_c(Y)$) and $f$ be an inverse image of $g$ under $\phi$. Then $f(x)=0$ if and only if $x\notin\{x_1,x_2,\cdots\}$ and hence discontinuity set of $f$ is infinite, which contradicts that $f\in C_c(X)_F$.
	\end{proof}
	
	\section{$C_c(X)_F$ as a Baer-ring}
	In this section, we give a characterization of $C_c(X)_F$ as a Baer-ring. A ring $R$ is called a Baer-ring if annihilator of every non-empty ideal is generated by an idempotent. A ring $R$ is said to be a $SA$-ring if for any two ideals $I$ and $J$, there exists an ideal $K$ of $R$ such that $Ann(I)\cap Ann(J)=Ann(K)$. A ring $R$ is called an $IN$-ring if for any two ideals $I$ and $J$, $Ann(I\cap J)=Ann(I)\cap Ann(J)$. Clearly, any $SA$-ring is always an $IN$-ring.
	
	Next lemma states a characterization of $IN$-ring when it is also a reduced ring.
	
	\begin{lemma}\label{SMP3,6.1}
		(\cite{Taherifar(2014)(1)}) Let $R$ be a reduced ring. Then the following statements are equivalent.
		\begin{itemize}
			\item[(i)] For any two orthogonal ideals $I$ and $J$ of $R$, $Ann(I)+Ann(J)=R$.
			\item[(ii)] For any two ideals $I$ and $J$ of $R$, $Ann(I)+Ann(J)=Ann(I\cap J)$.
		\end{itemize}
	\end{lemma}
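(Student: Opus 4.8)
The plan is to prove the two implications separately, and to record first a containment that holds in any commutative ring and needs no hypothesis: for arbitrary ideals $I$ and $J$ one always has $Ann(I)+Ann(J)\subseteq Ann(I\cap J)$, since if $aI=0$ and $bJ=0$ then $(a+b)x=ax+bx=0$ for every $x\in I\cap J$. Consequently, in statement (ii) only the reverse inclusion $Ann(I\cap J)\subseteq Ann(I)+Ann(J)$ carries any content, and this observation will localize all the real work into a single direction.

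For the direction (ii) $\Rightarrow$ (i), suppose $I$ and $J$ are orthogonal, so $IJ=0$. Because $R$ is reduced this forces $I\cap J=0$: any $x\in I\cap J$ satisfies $x^2\in IJ=0$, hence $x=0$. Then $Ann(I\cap J)=Ann(0)=R$, and (ii) immediately yields $Ann(I)+Ann(J)=R$. So this implication is essentially free.

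The substance is in (i) $\Rightarrow$ (ii). First I would establish the reduced-ring identity $Ann(I\cap J)=Ann(IJ)$: the inclusion $\subseteq$ is clear from $IJ\subseteq I\cap J$, while for $\supseteq$, if $r\,IJ=0$ and $x\in I\cap J$ then $x^2\in IJ$ gives $(rx)^2=r^2x^2=r(rx^2)=0$, so $rx=0$ by reducedness. With this in hand, fix $r\in Ann(I\cap J)=Ann(IJ)$. The decisive step is to notice that the ideals $rI$ and $J$ are orthogonal, since their product is generated by elements $r(ij)$ with $ij\in IJ$, all annihilated by $r$. Hypothesis (i) then applies to give $Ann(rI)+Ann(J)=R$, so I may write $1=u+v$ with $u\in Ann(rI)$ and $v\in Ann(J)$. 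Multiplying by $r$ gives $r=ru+rv$, where $ru\in Ann(I)$ because $u\,(rI)=0$ means $(ur)I=0$, and $rv\in Ann(J)$ because $rv\cdot J=r(vJ)=0$. Thus $r\in Ann(I)+Ann(J)$, which completes the reverse inclusion and hence (ii).

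The main obstacle is the one clever move inside (i) $\Rightarrow$ (ii): testing membership element-by-element and, for each $r\in Ann(I\cap J)$, manufacturing the orthogonal pair $rI$ and $J$ so that the hypothesis about \emph{orthogonal} ideals becomes usable. Everything surrounding it — the always-valid containment, the passage from $Ann(I\cap J)$ to $Ann(IJ)$, and the final bookkeeping $r=ru+rv$ — is routine, with reducedness entering only at the two points where a square is collapsed.
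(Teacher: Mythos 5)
Your argument is correct. Note that the paper does not prove this lemma at all --- it is quoted from \cite{Taherifar(2014)(1)} --- so there is no internal proof to compare against; your write-up supplies a complete, self-contained justification, and the key move (for $r\in Ann(I\cap J)$, pairing the orthogonal ideals $rI$ and $J$ so that hypothesis (i) becomes applicable, then splitting $r=ru+rv$) is exactly the standard device for results of this type. One small simplification is available: the identity $Ann(I\cap J)=Ann(IJ)$ is more than you need, since $IJ\subseteq I\cap J$ already gives $r(IJ)\subseteq r(I\cap J)=0$ directly, so $rI$ and $J$ are orthogonal without invoking reducedness there; in your proof reducedness is genuinely used only in the implication (ii) $\Rightarrow$ (i), where $IJ=0$ forces $I\cap J=0$.
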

	
	Furthermore, the next lemma gives some equivalent conditions of Baer-ring.
	
	\begin{lemma}\label{SMP3,6.2}
		(\cite{GMA(2015)}) Let $R$ be a commutative reduced ring. Then the following statements are equivalent.
		\begin{itemize}
			\item[(i)] $R$ is a Baer-ring.
			\item[(ii)] $R$ is a $SA$-ring.
			\item[(iii)] The space of prime ideals of $R$ is extremally disconnected.
			\item[(iv)] $R$ is a $IN$-ring.
		\end{itemize}
	\end{lemma}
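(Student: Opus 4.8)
The plan is to separate the purely algebraic equivalences $(i)\Leftrightarrow(ii)\Leftrightarrow(iv)$, handled by the calculus of annihilators in a reduced ring, from the topological equivalence $(i)\Leftrightarrow(iii)$, handled on the prime spectrum. Throughout I would use three facts about a commutative reduced ring $R$: for idempotents $e,f$ one has $eR\cap fR=efR$, $eR+fR=(e+f-ef)R$ and $Ann(eR)=(1-e)R$; the ideals $I$ and $Ann(I)$ are orthogonal, so by reducedness $Ann(I)\cap Ann(Ann(I))=0$; and $K\subseteq Ann(Ann(K))$ for every ideal $K$. The reformulation I would establish first is the key splitting lemma: $Ann(I)$ is generated by an idempotent if and only if $Ann(I)+Ann(Ann(I))=R$. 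Indeed, writing $1=a+b$ with $a\in Ann(I)$ and $b\in Ann(Ann(I))$, orthogonality gives $ab=0$, hence $a=a(a+b)=a^2$, and one checks $Ann(I)=aR$.

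With this in hand, $(i)\Rightarrow(ii)$ is immediate: if every annihilator is $eR$, then $Ann(I)+Ann(J)=(e+f-ef)R$ is again idempotent-generated, hence an annihilator, so $R$ is an $SA$-ring. For $(ii)\Rightarrow(i)$ I would feed the pair $(I,Ann(I))$ into the $SA$ hypothesis, obtaining $Ann(I)+Ann(Ann(I))=Ann(K)$ for some ideal $K$; applying $Ann(\,\cdot\,)$ and using $Ann(A+B)=Ann(A)\cap Ann(B)$ together with the collapse of triple annihilators gives $Ann(Ann(K))=Ann(Ann(I))\cap Ann(I)=0$, whence $K\subseteq Ann(Ann(K))=0$ and $Ann(K)=R$. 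The splitting lemma then yields Baerness. The equivalence $(i)\Leftrightarrow(iv)$ I would route through Lemma \ref{SMP3,6.1}: condition $(iv)$ is exactly \ref{SMP3,6.1}(ii), hence equivalent to \ref{SMP3,6.1}(i), the statement that orthogonal ideals satisfy $Ann(I)+Ann(J)=R$. Applying \ref{SMP3,6.1}(i) to the orthogonal pair $(I,Ann(I))$ again produces $Ann(I)+Ann(Ann(I))=R$ and hence Baerness; conversely, if $R$ is Baer and $IJ=0$, write $Ann(I)=eR$ and $Ann(J)=fR$, note that $IJ=0$ forces $Ann(Ann(I))\subseteq Ann(J)$ so that $Ann(Ann(I))\cdot Ann(Ann(J))=0$, and conclude $(1-e)(1-f)=0$, i.e. $e+f-ef=1$ and $Ann(I)+Ann(J)=R$.

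For $(i)\Leftrightarrow(iii)$ I would install the standard dictionary on $Spec(R)$ with its hull-kernel topology. For reduced $R$ and any ideal $I$ one has $\bigcap\{P\in Spec(R):I\not\subseteq P\}=Ann(I)$, so the closure of the open set $D(I)=\{P:I\not\subseteq P\}$ is precisely $V(Ann(I))$; since every open set of $Spec(R)$ has the form $D(I)$, the regular closed subsets are exactly the sets $V(Ann(I))$. Because clopen subsets of $Spec(R)$ correspond bijectively to idempotents of $R$, the set $V(Ann(I))$ is clopen if and only if $Ann(I)$ is idempotent-generated. Hence $R$ is Baer if and only if every regular closed set is clopen, which is one of the usual formulations of extremal disconnectedness. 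I expect this last equivalence to be the main obstacle: one must verify carefully that the closure of an \emph{arbitrary} open set (not just a basic one) equals $V(Ann(I))$, and that ``every regular closed set is clopen'' is genuinely equivalent to extremal disconnectedness. By contrast, the algebraic implications reduce entirely to the idempotent bookkeeping above.
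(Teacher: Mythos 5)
The paper does not prove this lemma at all: it is imported verbatim from \cite{GMA(2015)}, so there is no in-paper argument to compare yours against, and your write-up functions as a self-contained substitute for the citation. As such it is correct. The pivot you chose --- the splitting lemma that $Ann(I)$ is idempotent-generated iff $Ann(I)+Ann(Ann(I))=R$ --- is exactly the right reduction, and each of the four equivalences routes through it without gaps: the triple-annihilator collapse plus $Ann(I)\cap Ann(Ann(I))=0$ (this is where reducedness enters) handles $(ii)\Rightarrow(i)$, and Lemma \ref{SMP3,6.1} applied to the orthogonal pair $(I,Ann(I))$ handles $(iv)\Leftrightarrow(i)$. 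The spectral step you flag as the ``main obstacle'' is in fact unproblematic: every open subset of $Spec(R)$ is $D(I)$ for some ideal $I$, its closure is $V(k(D(I)))=V(Ann(I))$ by the reduced-ring identity you state, annihilator ideals in a reduced ring are radical, and clopen sets biject with idempotents, so ``Baer'' translates exactly to ``every regular closed set is clopen,'' which is extremal disconnectedness. One point you should make explicit rather than silent: you are working with the standard definitions of $SA$-ring ($Ann(I)+Ann(J)=Ann(K)$) and $IN$-ring ($Ann(I\cap J)=Ann(I)+Ann(J)$), whereas the paper's printed definitions read $Ann(I)\cap Ann(J)=Ann(K)$ and $Ann(I\cap J)=Ann(I)\cap Ann(J)$; the former is satisfied by every ring (take $K=I+J$) and the latter is not the $IN$ condition, so both are evidently typos --- your reading is the only one consistent with Lemma \ref{SMP3,6.1} and with the title of \cite{GMA(2015)}, but the discrepancy deserves a sentence.
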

	
	Now we wish to establish some equivalent conditions when $C_c(X)_F$ is an $IN$-ring, $SA$-ring or a Baer-ring. For this purpose we first prove the following  lemma.
	
	\begin{lemma}\label{SMP3,6.3}
		For any subset $A$ of a space $X$, there exists a subset $S$ of $C_c(X)_F$ such that $A=\bigcup COZ[S]=\bigcup\{COZ(f):f\in S\}$.
	\end{lemma}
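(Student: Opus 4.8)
The plan is to produce an explicit $S$ built from characteristic functions of singletons. The crucial observation is that for every point $x\in X$ the characteristic function $\chi_{\{x\}}$ belongs to $C_c(X)_F$: its range is the finite (hence countable) set $\{0,1\}$, and it is discontinuous on at most the single point $x$, so its discontinuity set is finite. This membership has already been exploited several times in the preceding sections (for instance in the proof that every element of $C_c(X)_F$ is a zero divisor or a unit). Moreover, recalling that $COZ(f)=X\setminus Z(f)$, one computes directly that $COZ(\chi_{\{x\}})=X\setminus Z(\chi_{\{x\}})=\{x\}$, so singletons are exactly the cozero sets of these functions.

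Given this, for an arbitrary subset $A$ of $X$ I would set
\[
S=\{\chi_{\{x\}}:x\in A\}\subseteq C_c(X)_F .
\]
Then $\bigcup COZ[S]=\bigcup_{x\in A}COZ(\chi_{\{x\}})=\bigcup_{x\in A}\{x\}=A$, which is precisely the required identity. The degenerate case $A=\emptyset$ is covered by taking $S=\emptyset$, whose empty union of cozero sets is empty.

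There is essentially no genuine obstacle here: the whole content of the lemma reduces to the single fact that every singleton of $X$ is realizable as a cozero set inside $C_c(X)_F$, and arbitrary subsets are then obtained as unions of singletons. The only step that deserves to be stated carefully is the membership $\chi_{\{x\}}\in C_c(X)_F$, which follows immediately from the two defining conditions of the ring (finite discontinuity set and countable range). Thus the argument is short, and the emphasis in writing it up should simply be on exhibiting $S$ and verifying the displayed equality.
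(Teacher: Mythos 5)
Your proposal is correct and coincides with the paper's own argument: the paper likewise observes that $\chi_{\{x\}}\in C_c(X)_F$ for every $x\in X$ and writes $A$ as the union of the corresponding singleton cozero sets. Your write-up is simply a more explicit version of the same one-line proof.
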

	\begin{proof}
		This follows immediately, since $A=\bigcup \{\chi_{\{x\}}:x\in A\}$ and $\chi_{\{x\}}\in C_c(X)_F$ for all $x\in X$.
	\end{proof}
	
	Now, in this situation, we are ready to prove the following equivalent conditions.
	
	\begin{theorem}
		The following statements are equivalent.
		\begin{itemize}
			\item[(i)] Any two disjoint subsets of $X$ are $\mathcal{F}_c$-completely separated.
			\item[(ii)] $C_c(X)_F$ is an $IN$-ring.
			\item[(ii)] $C_c(X)_F$ is a $SA$-ring.
			\item[(iv)] $C_c(X)_F$ is a Baer-ring.
			\item[(v)] The space of prime ideals of $C_c(X)_F$ is an extremally disconnected space.
			\item[(vi)] Any subset of $X$ is of the form $COZ(e)$ for some idempotent $e\in C_c(X)_F$.
			\item[(vii)] For any subset $A$ of $X$, there exists a finite subset $F$ of $X$ such that $A\setminus F$ is clopen in $X\setminus F$.
		\end{itemize}
	\end{theorem}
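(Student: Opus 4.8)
The plan is to separate the seven conditions into a ring-theoretic cluster, a topological cluster, and one bridge joining them. First I would clear the ring-theoretic cluster outright: the earlier lemma records that $C_c(X)_F$ is reduced, so Lemma \ref{SMP3,6.2} applies and delivers (ii) $\Leftrightarrow$ (iii) $\Leftrightarrow$ (iv) $\Leftrightarrow$ (v) with no further work. After this the whole theorem collapses to relating a single one of these---I would pick the Baer condition (iv)---to the topological statements (i), (vi), (vii).

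Next I would set up the topological triangle (i) $\Leftrightarrow$ (vi) $\Leftrightarrow$ (vii). The organising remark is that an idempotent of $C_c(X)_F$ is precisely a characteristic function $\chi_A$ that happens to lie in the ring, with $COZ(\chi_A)=A$; and $\chi_A\in C_c(X)_F$ iff its range (inside $\{0,1\}$) is countable, which is automatic, and its set of discontinuities is finite, i.e. iff $A\setminus F$ is clopen in $X\setminus F$ for some finite $F$. This makes (vi) $\Leftrightarrow$ (vii) a direct reformulation. For (i) $\Rightarrow$ (vi) I would apply (i) to the disjoint pair $A,\,X\setminus A$, obtaining $f\in C_c(X)_F$ with $f(A)=0$ and $f(X\setminus A)=1$; because these two sets exhaust $X$, the function $f$ assumes only the values $0$ and $1$, forcing $f=\chi_{X\setminus A}$ and hence $\chi_A=\underline{1}-f\in C_c(X)_F$. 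The reverse (vi) $\Rightarrow$ (i) is immediate, since for disjoint $A,B$ the idempotent $\chi_B$ already $\mathcal{F}_c$-completely separates them.

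The substantive step is the bridge (iv) $\Leftrightarrow$ (vi). By Theorem \ref{SMP3,3.14}(ii), for an ideal $I$ one has $Ann(I)=M_A$ with $A=\bigcup COZ[I]$, and Lemma \ref{SMP3,6.3} (together with the observation $\bigcup COZ[\langle S\rangle]=\bigcup COZ[S]$) shows that $A$ runs over every subset of $X$ as $I$ runs over all ideals. Hence $C_c(X)_F$ is Baer exactly when $M_A$ is generated by an idempotent for each subset $A$. I would then prove the key claim that $M_A=(\chi_{X\setminus A})$ iff $\chi_{X\setminus A}\in C_c(X)_F$: the easy direction uses $f=f\cdot\chi_{X\setminus A}$ for every $f\in M_A$; for the converse, if $M_A=(e)$ with $e=\chi_B$ idempotent, then noting that $\chi_{\{x\}}\in M_A$ precisely for $x\notin A$ forces $B=X\setminus A$, so $e=\chi_{X\setminus A}$ lies in the ring. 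Quantifying over all $A$ and swapping $A$ with $X\setminus A$ turns this into statement (vi), closing the loop.

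The main obstacle is this converse half of the bridge: extracting, from the mere existence of some generating idempotent of $M_A$, that the idempotent must be exactly $\chi_{X\setminus A}$ and therefore that $\chi_{X\setminus A}$ belongs to $C_c(X)_F$. The cozero/annihilator bookkeeping---especially verifying that every subset really arises as $\bigcup COZ[I]$---is the delicate point, whereas the separation arguments and the indicator-function reformulation are routine once Theorem \ref{SMP3,2.3} is available. As an alternative entry to the ring side, Lemma \ref{SMP3,6.1} reformulates the relevant annihilator condition through orthogonal ideals $I,J$ as $Ann(I)+Ann(J)=C_c(X)_F$; translating $A=\bigcup COZ[I]$ and $B=\bigcup COZ[J]$, this says precisely that the disjoint sets $A,B$ admit $f\in M_A$ with $f(A)=0$ and $f(B)=1$, i.e. condition (i).
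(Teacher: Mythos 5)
Your proposal is correct, but it organizes the equivalences differently from the paper. Both arguments dispose of (ii)$\Leftrightarrow$(iii)$\Leftrightarrow$(iv)$\Leftrightarrow$(v) by citing Lemma \ref{SMP3,6.2}, and both build the bridge (iv)$\Leftrightarrow$(vi) out of Lemma \ref{SMP3,6.3} together with the annihilator description $Ann(I)=M_{\bigcup COZ[I]}$ from Theorem \ref{SMP3,3.14}; your version of that bridge is just a more explicit unwinding (identifying idempotents with characteristic functions and pinning down the generator of $M_A$ as $\chi_{X\setminus A}$), and your observation that $\bigcup COZ[\langle S\rangle]=\bigcup COZ[S]$ fills in a point the paper leaves tacit. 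The genuine divergence is in how (i) is attached: the paper proves (i)$\Leftrightarrow$(ii) ring-theoretically, passing disjoint sets through orthogonal ideals and invoking Lemma \ref{SMP3,6.1} to get $Ann(I)+Ann(J)=C_c(X)_F$, whereas you prove (i)$\Leftrightarrow$(vi) purely topologically by separating $A$ from $X\setminus A$ and noting that any separating function for a pair covering $X$ must be $\chi_{X\setminus A}$ itself. Your route funnels everything through (vi) and keeps the separation axiom entirely on the topological side, which is arguably cleaner and makes Lemma \ref{SMP3,6.1} optional (you correctly note it as an alternative entry); the paper's route has the minor virtue of exhibiting directly how complete separation manifests as the orthogonal-annihilator identity. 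Both are sound, and your edge cases ($A=\emptyset$ or $A=X$, and idempotents taking only the values $0$ and $1$) check out.
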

	\begin{proof}
		From Lemma \ref{SMP3,6.2}, statements $(ii), (iii), (iv)$ and $(v)$ are equivalent.
		
		$(i)\implies (ii)$: Let $I$ and $J$ be two orthogonal ideals of $C_c(X)_F$. Then $IJ=0$ and $\bigcup COZ[I]$, $\bigcup COZ[J]$ are two disjoint subsets of $X$. Now, by given hypothesis there exist two elements $f_1,f_2\in C_c(X)_F$ such that $\bigcup COZ[I]\subseteq Z(f_1)$ and $\bigcup COZ[J]\subseteq Z(f_2)$ with $Z(f_1)\cap Z(f_2)=\emptyset$. This implies that $f_1\in Ann(I)$, $f_2\in Ann(J)$ and $Z(f_1^2+f_2^2)=\emptyset$. Then $f_1^2+f_2^2$ is a unit element in $Ann(I)+Ann(J)$. Hence $Ann(I)+Ann(J)=C_c(X)_F$. Thus by Lemma \ref{SMP3,6.1}, $C_c(X)_F$ is an $IN$-ring.
		
		$(ii)\implies (i)$: Suppose that $A$ and $B$ are two disjoint subsets of $X$. Then by Lemma \ref{SMP3,6.3}, there are two subset $S_1$ and $S_2$ of $C_c(X)_F$ such that $A=\bigcup COZ[S_1]$ and $B=\bigcup COZ[S_2]$. Let $I$ and $J$ be two ideals generated by $S_1$ and $S_2$ respectively. Then we have $\bigcup COZ[I]\cap \bigcup COZ[J]=A\cap B=\emptyset$. This implies that $IJ=0$ i.e., $I$ and $J$ are orthogonal ideals of $C_c(X)_F$. Then by Lemma \ref{SMP3,6.1}, $Ann(I)+Ann(J)=C_c(X)_F$. So there exist $h_1\in Ann(I)$ and $h_2\in Ann(J)$ such that $1=h_1+h_2$, a unit element. Then $Z(h_1)$ and $Z(h_2)$ are disjoint. Since $h_1\in Ann(I)$, $A=\bigcup COZ[S_1]=\bigcup COZ[I]\subseteq Z(h_1)$. Similarly, $B\subseteq Z(h_2)$. This proves $(i)$.
		
		$(iv)\implies (vi)$: Let $A$ be a subset of $X$. Then by Lemma \ref{SMP3,6.3}, there exists a subset $S$ of $C_c(X)_F$ such that $A=\bigcup COZ[S]$. Let $I$ be an ideal generated by $S$. Then by given hypothesis, there exists an idempotent $e\in C_c(X)_F$ such that $Ann(e)=Ann(I)$. Thus by using Theorem \ref{SMP3,3.14}, we have $A=\bigcup COZ[I]=COZ(e)$.
		
		$(vi)\implies (iv)$:  Suppose that $I$ is an ideal of $C_c(X)_F$. Then by given hypothesis there is an idempotent $e\in C_c(X)_F$ such that $\bigcup COZ[I]=COZ(e)$. Then by Theorem \ref{SMP3,3.14}, $Ann(I)=Ann(e)=(1-e)C_c(X)_F$. Hence $C_c(X)_F$ is a Baer-ring.
		
		$(vi)\implies (vii)$: Let $A=COZ(e)$, for some idempotent $e\in C_c(X)_F$. Then $e\in C(X\setminus F)$ for some finite subset $F$ of $X$. Now $A\setminus F=COZ(e)\cap (X\setminus F)$ is clopen in $X\setminus F$.
		
		$(vii)\implies (vi)$: Trivial.
	\end{proof}

	\section{$F_cP$-apace}
	
	\begin{definition}
		A commutative ring $R$ with unity is said to be a von Neumann regular ring or simply a regular ring if for each $a\in R$, there exists $r\in R$ such that $a=a^2 r$.
	\end{definition}	
	 We recall that X is a P-space if and only if C(X) is regular ring [4J,\cite{GJ}].\\
	 
	 Now we define $F_cP$-Space as follows.
	\begin{definition}
		A space X is called $F_cP$-space if $C_c(X)_F$ is regular.
	\end{definition}
	
	\begin{example}
		Consider the space $X=\{0,1,\frac{1}{2},\frac{1}{3}, \cdots \}$ (endowed with the subspace topology from the usual topology on the real line $\mathbb{R}$). It is clear that $C_c(X)_F=\mathbb{R}^X$, which means that X is an $F_cP$-space. On the other hand by 4K.1 \cite{GJ}, X is not a $P$-space.
	\end{example}
	
	Next example shows that for every $P$-space $X$, $C_c(X)_F$ may not be regular i.e., not a $F_cP$-apace.
	
	\begin{example} Let the set of rational numbers $\mathbb{Q}$ be the subspace of the real line $\mathbb{R}$. Let $X=\mathbb{Q}^*=\mathbb{Q}\cup\{\infty\}$, one point compactification of $\mathbb{Q}$. Then every continuous function $f:\mathbb{Q}^*\rightarrow \mathbb{R}$ is a constant function. Hence $C(\mathbb{Q}^*)$ is isomorphic to $\mathbb{R}$, a regular ring. Hence $X$ is a $P$-space.
		
		But we wish to show that $C_c(X)_F$ is not a regular ring.
		Let $f:\mathbb{Q}^*\rightarrow\mathbb{R}$ be defined as,
		\[  f(x)= \left\{
		\begin{array}{ll}
			cos(\frac{\pi x}{2}), & if~ x\in \mathbb{Q} \\
			2, & if~ x=\infty.\\
		\end{array}
		\right. \]
		Then $f\in C_c(X)_F$. If possible, let there exists a $g\in C_c(X)_F$ such that $f=f^2 g$. Then $g(x)=\frac{1}{f(x)}$ when $f(x)\neq 0$ and discontinuity set of $g$, $D_g\supseteq\{1,-1,3,-3,5,-5,\cdots\}$. This shows that $g\notin C_c(X)_F$. Hence $C_c(X)_F$ is not regular i.e., $X$ is not a $F_cP$-apace.
	\end{example}
	
	However, if we consider $X$ as a Tychonoff space, then the following statement is true.
	
	\begin{theorem}
		If $X$ is a P-space, then it is also a $F_cP$-space.
	\end{theorem}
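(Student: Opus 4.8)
The plan is to prove the statement directly from the definition of regularity: for each $f \in C_c(X)_F$ I would exhibit a quasi-inverse $g \in C_c(X)_F$ satisfying $f = f^2 g$, which is exactly what is needed to conclude that $C_c(X)_F$ is regular, i.e. that $X$ is an $F_cP$-space. The natural candidate is the pointwise inverse on the cozero set: set $g(x) = 1/f(x)$ whenever $f(x) \neq 0$, and $g(x) = 0$ whenever $f(x) = 0$. This $g$ satisfies $f = f^2 g$ everywhere on $X$ by a one-line pointwise check (on $\mathrm{coz}(f)$ one has $f^2 \cdot (1/f) = f$, and on $Z(f)$ both sides vanish), so the whole content of the argument is to verify that this $g$ genuinely belongs to $C_c(X)_F$.

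Next I would recall that membership in $C_c(X)_F$ imposes two conditions: countable range and a finite discontinuity set. The countable-range requirement is immediate, since $f$ has countable range and the range of $g$ is contained in $\{0\} \cup \{1/f(x) : f(x) \neq 0\}$, which is countable. Hence the crux is to show that $g$ is discontinuous on at most a finite set.

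This is where the hypothesis enters. I would choose a finite set $F \subseteq X$ on whose complement $f$ is continuous, so that $f|_{X \setminus F} \in C(X \setminus F)$. Since $X$ is Tychonoff it is $T_1$, so $F$ is closed and $X \setminus F$ is open in $X$; as any subspace of a P-space is again a P-space, $X \setminus F$ is a P-space. By the zero-set characterization of P-spaces (4J, \cite{GJ}), the zero-set $Z(f|_{X \setminus F})$ is open, and being a zero-set it is also closed, hence clopen in $X \setminus F$; consequently $\mathrm{coz}(f|_{X \setminus F})$ is clopen as well. On this clopen set $g = 1/f$ is continuous, being the reciprocal of a continuous nowhere-zero function, while on the complementary clopen set $g \equiv 0$ is continuous; gluing across a clopen partition shows $g$ is continuous on $X \setminus F$. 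Thus the discontinuity set of $g$ is contained in the finite set $F$, so $g \in C_c(X)_F$, which completes the proof that $C_c(X)_F$ is regular.

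The step I expect to be the only genuine obstacle is the continuity of $g$ on $X \setminus F$: without the P-space hypothesis the reciprocal $1/f$ need not extend continuously across the boundary of $Z(f)$, which is precisely the failure exhibited in the preceding $\mathbb{Q}^*$ example. The hypothesis is used exactly to force $Z(f|_{X \setminus F})$ to be clopen, thereby eliminating any such boundary and making the piecewise definition continuous. Everything else is routine bookkeeping, so I would present the reduction to $X \setminus F$ together with the clopen decomposition of its domain as the heart of the argument.
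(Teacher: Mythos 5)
Your proof is correct and follows essentially the same route as the paper: restrict to $X\setminus F$ for a finite $F$ off which $f$ is continuous, use that a subspace of a P-space is a P-space, and build the quasi-inverse piecewise so that only the countable range and the finiteness of the discontinuity set need checking. Your execution is in fact cleaner --- by noting that $Z(f|_{X\setminus F})$ is clopen you glue $1/f$ and $0$ over a two-piece clopen partition of $X\setminus F$, whereas the paper first invokes regularity of $C(X\setminus F)$ and then verifies continuity of a five-case extension via $G_\delta$ and pasting-lemma arguments.
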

	
	\begin{proof}
		Let $f\in C_c(X)_F$. Then $f\in C(X\setminus F)$ for a finite subset $F$ of $X$. Since subspace of a $P$-space is $P$-space (see 4K.4 in \cite{GJ}), $X\setminus F$ is a $P$-space. So $C(X\setminus F)$ is regular. This implies that $(f\vert_{X\setminus F})^2.g=f\vert_{X\setminus F}$, for some $g\in C(X\setminus F)$. Now, we define $g^*:X\rightarrow \mathbb{R}$ by
		\[  g^*(x)= \left\{
		\begin{array}{ll}
			g(x), & if~ x\in (X\setminus Z(f))\setminus F \\
			0, & if~ x\in (Z(g)\cap Z(f))\setminus F\\
			0, & if~ x\in (Z(f)\setminus Z(g))\setminus F\\
			\frac{1}{f(x)}, & if ~ x\in F\setminus Z(f)\\
			0, & if ~ x\in Z(f)\cap F.\\
		\end{array}
		\right. \]
Since $f\vert_{X\setminus F}$ is continuous, then $(X\setminus Z(f))\setminus F$ is open in $X\setminus F$. Also, $X\setminus F$ is open in $X$ and hence $(X\setminus Z(f))\setminus F$ is open in $X$. Now, $(Z(g)\cap Z(f))\setminus F$ is $G_\delta$-set in $X\setminus F$ and $X\setminus F$ is  $G_\delta$-set in $X$, then $(Z(g)\cap Z(f))\setminus F$ is $G_\delta$-set in $X$. Hence it is open in $X$ (see 4J.4 \cite{GJ}). Again, $(Z(f)\setminus Z(g))\setminus F=(Z(f)\cap (Z(g))^c)\setminus F$ is $G_\delta$-set in $X\setminus F$ and hence open in $X$ (using 4J.4 \cite{GJ}). Then using Pasting lemma, we can easily observe that $g^*$ is well defined and continuous on $X\setminus F$. Thus $g^\star\in C_c(X)_F$ and $f=f^2.g^\star$. So it is a $F_cP$-space.
	\end{proof}

	\

	\begin{theorem}\label{SMP3,7.6}
		The following statements are equivalent.
		\begin{itemize}
			\item[(i)] The space $X$ is an $F_cP$-space.
			\item[(ii)] For any $Z\in Z[C_c(X)_F]$, there exists a finite subset $F$ in $X$ such that $Z\setminus F$ is a clopen subset in $X\setminus F$.
			\item[(iii)] $C_c(X)_F$ is a $PP$-ring, that is annihilator of every element is generated by an idempotent.
		\end{itemize}
	\end{theorem}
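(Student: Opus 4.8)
The plan is to prove the two equivalences (i)$\Leftrightarrow$(ii) and (i)$\Leftrightarrow$(iii) separately, writing $R=C_c(X)_F$ and leaning on two facts already established for $R$: it is a reduced ring, and \emph{every element of $R$ is either a unit or a zero divisor}. I also recall the standard fact that a commutative ring is von Neumann regular if and only if every principal ideal is generated by an idempotent, so that establishing ``$fR$ is idempotent-generated'' for each $f$ is the same as establishing regularity.

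For (i)$\Leftrightarrow$(ii): Assuming (i), given $Z=Z(f)$ write $f=f^2g$; then $e:=fg$ is idempotent and the relation $fe=f$ forces $e(x)=1$ on $COZ(f)$ and $e(x)=0$ on $Z(f)$, i.e. $e=\chi_{COZ(f)}$. Since $e\in R$ is discontinuous only on a finite set $F$ and idempotent, $e\vert_{X\setminus F}$ is a continuous $\{0,1\}$-valued function, so $COZ(f)\setminus F$, and hence $Z(f)\setminus F$, is clopen in $X\setminus F$, which is (ii). Conversely, assuming (ii), fix $f$ and enlarge its finite discontinuity set to a finite $F$ for which $Z(f)\setminus F$ (equivalently $COZ(f)\setminus F$) is clopen in $X\setminus F$; this is legitimate because $X$ is $T_1$, so removing finitely more points keeps the set clopen in the smaller open subspace. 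Define $g=1/f$ on $COZ(f)$ and $g=0$ on $Z(f)$. The Pasting Lemma applied to the two clopen pieces of $X\setminus F$ shows $g$ is continuous on $X\setminus F$ with countable range, whence $g\in R$ and $f=f^2g$. This is the same pasting technique already used in the proof that every $P$-space is an $F_cP$-space.

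For (i)$\Leftrightarrow$(iii): The implication regular $\Rightarrow$ $PP$ is formal in any commutative ring, since from $f=f^2g$ the idempotent $e=fg$ gives $fR=eR$ and $Ann(f)=Ann(e)=(1-e)R$. For the converse, suppose $R$ is a $PP$-ring and fix $f$; write $Ann(f)=(1-e)R$ with $e^2=e$, so $(1-e)f=0$, i.e. $ef=f$. A short computation shows $Ann\big(f+(1-e)\big)=0$: if $\big(f+(1-e)\big)h=0$, multiplying by $e$ yields $fh=0$, so $h\in Ann(f)=(1-e)R$, and substituting $h=(1-e)k$ forces $h=0$. Hence $f+(1-e)$ is a non-zero-divisor, so by the dichotomy it is a unit $u$. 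From $ue=f$ we obtain $e\in fR$, while $ef=f$ gives $f\in eR$; thus $fR=eR$, and writing $e=fs$ produces $f=ef=f^2s$, so $R$ is regular.

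The main obstacle is precisely the step (iii)$\Rightarrow$(i). For a general commutative reduced ring a $PP$-ring need not be von Neumann regular, so some extra input is unavoidable; here it is supplied by the earlier lemma that $C_c(X)_F$ has no nonzero element that is simultaneously a non-unit and a non-zero-divisor. The crux is the verification that $Ann\big(f+(1-e)\big)=0$, which is exactly what allows one to promote ``annihilator generated by an idempotent'' to ``principal ideal generated by an idempotent,'' and thereby to full regularity.
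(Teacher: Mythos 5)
Your proof is correct, and in one key step it takes a genuinely different route from the paper's. The paper proves the cycle (i)$\Rightarrow$(ii)$\Rightarrow$(iii)$\Rightarrow$(i) entirely through zero sets: for (ii)$\Rightarrow$(iii) it builds an explicit idempotent $e^\star$ from the clopen set $Z(f)\setminus F$ and applies Theorem \ref{SMP3,3.14} to get $Ann(f)=Ann(e^\star)=(1-e^\star)C_c(X)_F$, and for (iii)$\Rightarrow$(i) it again uses Theorem \ref{SMP3,3.14} to turn $Ann(e)=Ann(f)$ into $Z(e)=Z(f)$ and then recovers regularity by the same pasting construction of a quasi-inverse that you use for (ii)$\Rightarrow$(i); so your (i)$\Leftrightarrow$(ii) argument and the paper's are essentially identical in content. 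Where you diverge is (iii)$\Rightarrow$(i): instead of routing back through the topological condition (ii), you argue purely ring-theoretically --- from $Ann(f)=(1-e)R$ you verify $Ann\bigl(f+(1-e)\bigr)=0$, invoke the dichotomy that every element of $C_c(X)_F$ is a unit or a zero divisor to conclude $f+(1-e)$ is a unit, and deduce $fR=eR$, hence $f=f^2s$. This buys generality and transparency: it shows that any commutative reduced $PP$-ring which coincides with its total quotient ring is von Neumann regular, isolating exactly which special feature of $C_c(X)_F$ is needed, whereas the paper's route stays concrete and produces the quasi-inverse explicitly. Two cosmetic points: in (i)$\Rightarrow$(ii) it is the identity $e=fg$ (not the relation $fe=f$) that forces $e$ to vanish on $Z(f)$; and before applying the dichotomy you should observe that $f+(1-e)\neq 0$, which follows at once since its annihilator is zero while the ring is nonzero. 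Neither affects correctness.
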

	
	\begin{proof}
		$(i)\implies (ii)$: Let $Z(f)\in Z[C_c(X)_F]$. By $(i)$, there exists $g\in C_c(X)_F$ such that $f^2g=f$. Also, there is a finite subset $F$ of $X$ such that $f,g\in C(X\setminus F)$. Hence for any $x\in X\setminus F$ we have $f(x)^2g(x)=f(x)$. Therefore $Z(f\vert_{X\setminus F})\cup Z((1-fg)\vert_{X\setminus F})=X\setminus F$. On the other hand $Z(f\mid_{X\setminus F})\cap Z((1-fg)\vert_{X\setminus F})=\phi$. These shows that $Z(f\vert_{X\setminus F})=Z(f)\setminus F$ is clopen in $X\setminus F$.
		
		$(ii)\implies (iii)$: Let  $f\in C_c(X)_F$. Then there is a finite subset $F$ of $X$ such that $Z(f)\setminus F$ is clopen in $X\setminus F$. So $Z(f)\setminus F =Z(e)$ for some idempotent $e\in C_c(X\setminus F)$. Therefore $Z(f)=Z(e^\star)$ where $e^\star\vert_{X\setminus F}=e, e^\star$ is zero on $Z(f)\cap F$ and $e^\star$ is equal to $1$ on $F\setminus Z(f)$. Then by Theorem \ref{SMP3,3.14}, we have $Ann(f)=Ann(e^\star)=(1-e^\star)C_c(X)_F$, i.e., $C_c(X)_F$ is a $PP$-ring.
		
		$(iii)\implies(i)$: Assume that $f\in C_c(X)_F$. By hypothesis, there is an idempotent $e\in C_c(X)_F$ such that $Ann(e)=Ann(f)$. By Theorem \ref{SMP3,3.14}, $Z(e)=Z(f)$. Now, $F$ is a finite subset such that $f,e\in C_c(X\setminus F)$. Then $Z(f)\setminus F=Z(e)\setminus F$ is a clopen subset in $X\setminus F$. Now, we define $f^\star:X\rightarrow\mathbb{R}$ by  
		\[  f^*(x)= \left\{
		\begin{array}{ll}
			0, & if~ x\in Z(f)\\
			\frac{1}{f(x)}, & otherwise.\\
		\end{array}
		\right. \]
		Then $f^\star\in C_c(X)_F$ and $f^2f^\star=f$. Thus $C_c(X)_F$ is a regular ring i.e., X is an $F_cP$-space.
		
	\end{proof}

	\section{Zero divisor graph of $C_c(X)_F$}
	
	Consider the graph $\Gamma(C_c(X)_F)$ of the ring $C_c(X)_F$ with the vertex set $V,$ the collection of  all nonzero zero divisors in the ring $C_c(X)_F$ and two vertices $f,g $ are adjacent if and only if $fg=0$ on $X$. We recall that for $f\in C_c(X)_F$, $Z(f)=\{x\in X: f(x)=0\}$ is called the zero set of $f$. For a $T_1$ space  $X$ and $x\in X$, the characteristic function $\chi_{\{x\}}$ defined by $\chi_{\{x\}}(y)=0$ if $y\neq x$ and $\chi_{\{x\}}(x)=1$ is a member of $C_c(X)_F$.
	\par The following result provides a condition of the adjacency of two vertices of $\Gamma(C_c(X)_F)$ in terms of their zero sets.

	\begin{lemma}\label{deg1}
		Two vertices $f,g$ in the graph $\Gamma(C_c(X)_F)$ are adjacent if and only if $Z(f)\cup Z(g)=X$.
	\end{lemma}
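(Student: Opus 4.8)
The plan is to unwind the definition of adjacency and reduce everything to the elementary identity for zero sets of products recorded in Section 2. By the very definition of the graph $\Gamma(C_c(X)_F)$, the vertices $f$ and $g$ are adjacent exactly when $fg=\underline{0}$, i.e. when the product function vanishes at every point of $X$. Since multiplication in $C_c(X)_F$ is pointwise, $fg$ is the zero function if and only if its zero set is the whole space; that is, adjacency is equivalent to $Z(fg)=X$.

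Next I would invoke Property (iv) from the preliminaries, namely $Z(f\cdot g)=Z(f)\cup Z(g)$. Combining this with the previous observation, the relation $fg=\underline{0}$ holds if and only if $Z(f)\cup Z(g)=Z(fg)=X$. Reading this chain of equivalences in both directions yields the lemma at once: $f$ and $g$ are adjacent if and only if $Z(f)\cup Z(g)=X$.

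There is essentially no obstacle here, since the whole argument is a pointwise computation backed by Property (iv); the one point worth stating with care is that ``$fg=0$ on $X$'' is to be read as the identity $fg=\underline{0}$ in the ring, which for pointwise-defined functions is the same as $(fg)(x)=0$ for every $x\in X$, and hence the same as $x\in Z(f)\cup Z(g)$ for every $x$. For completeness one might additionally remark that whenever $Z(f)\cup Z(g)=X$ with $f,g$ nonzero, both functions are genuine nonzero zero divisors (each is annihilated by the other), so they legitimately occur as vertices of $\Gamma(C_c(X)_F)$; this confirms that the equivalence is stated within the correct vertex set, though it is not needed for the logical core of the proof.
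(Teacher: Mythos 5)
Your proof is correct and follows essentially the same route as the paper: both reduce adjacency to the identity $fg=\underline{0}$ and then apply the zero-set identity $Z(fg)=Z(f)\cup Z(g)$ to conclude $Z(f)\cup Z(g)=X$. The extra remark about the vertices being genuine zero divisors is harmless but not needed.
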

	\begin{proof}
		To begin with, let us assume that $Z(f)\cup Z(g)=X$. Then  $fg=0$. So $f$ and $g$ are adjacent in $\Gamma(C_c(X)_F).$ 
		\par Conversely, let $f$ and $g$ be adjacent in $\Gamma(C_c(X)_F)$. Then  $fg=0$. This implies $Z(0)=Z(fg)=Z(f)\cup Z(g)=X$.
	\end{proof}
	\begin{lemma} \label{deg 2}
		For any two vertices $f,g$ there is another vertex $h$ in the graph $\Gamma(C_c(X)_F)$ which is adjacent to both $f$ and $g$ if and only if $Z(f)\cap Z(g)\neq \emptyset$. 
	\end{lemma}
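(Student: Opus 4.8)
The plan is to reduce everything to the zero-set criterion for adjacency already supplied by Lemma \ref{deg1}, namely that two vertices $u,v$ are adjacent precisely when $Z(u)\cup Z(v)=X$. Once this is invoked, both implications become elementary statements about the zero sets $Z(f)$ and $Z(g)$, and the whole argument is carried by a single explicit witness, the characteristic function $\chi_{\{x_0\}}$ of a suitably chosen point $x_0$.

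First I would treat the reverse implication. Suppose $Z(f)\cap Z(g)\neq\emptyset$ and fix a point $x_0\in Z(f)\cap Z(g)$. Set $h=\chi_{\{x_0\}}$. Since $X$ is $T_1$ this function lies in $C_c(X)_F$, and because $f,g$ are genuine vertices (nonzero zero divisors) we must have $|X|\geq 2$; hence $Z(h)=X\setminus\{x_0\}\neq\emptyset$, so $h$ is a nonzero non-unit and therefore, by the earlier lemma that every element is a unit or a zero divisor, a nonzero zero divisor, i.e.\ a vertex. Moreover $h$ is automatically \emph{another} vertex, distinct from both $f$ and $g$: we have $x_0\in Z(f)\cap Z(g)$ yet $x_0\notin Z(h)$. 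Finally, since $x_0\in Z(f)$ we get $Z(f)\cup Z(h)=Z(f)\cup(X\setminus\{x_0\})=X$, so by Lemma \ref{deg1} $h$ is adjacent to $f$; the identical computation with $g$ shows $h$ is adjacent to $g$.

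For the forward implication, suppose some vertex $h$ is adjacent to both $f$ and $g$. By Lemma \ref{deg1} this yields $Z(f)\cup Z(h)=X$ and $Z(g)\cup Z(h)=X$. As $h$ is a nonzero function there is a point $x_0$ with $h(x_0)\neq 0$, that is $x_0\notin Z(h)$. From $Z(f)\cup Z(h)=X$ we are then forced to have $x_0\in Z(f)$, and symmetrically $x_0\in Z(g)$, whence $x_0\in Z(f)\cap Z(g)$ and the intersection is nonempty.

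I anticipate no serious obstacle; the only point demanding care is the verification in the reverse direction that $h=\chi_{\{x_0\}}$ is a \emph{legitimate} vertex, namely a nonzero zero divisor genuinely different from $f$ and $g$. This is exactly where the $T_1$ hypothesis (to place $\chi_{\{x_0\}}$ in $C_c(X)_F$) and the earlier observation that every non-unit is a zero divisor are needed, and it is the step I would write out most explicitly.
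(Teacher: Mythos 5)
Your proof is correct and takes essentially the same route as the paper's: the reverse direction uses the characteristic function $\chi_{\{x_0\}}$ of a common zero as the witness $h$, and the forward direction evaluates at a point where $h$ is nonzero. The only differences are cosmetic --- you phrase adjacency via Lemma \ref{deg1} rather than directly via $hf=0$, and you explicitly verify that $\chi_{\{x_0\}}$ is a vertex distinct from $f$ and $g$, details the paper leaves implicit.
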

	
	\begin{proof}
		Firstly, we consider that there is a vertex $h$ such that $h$ is adjacent to both $f$ and $g$. Then $hf=0$ and $hg=0$.	As $h$ is non-zero, there exists a point $x_\circ\in X$ such that $h(x_\circ)\neq 0$. Then obviously, $f(x_\circ)=0$ and $g(x_\circ)=0$. Hence $x_\circ\in Z(f)\cap Z(g)$. Thus $Z(f)\cap Z(g)\neq \emptyset$. 
		\par Conversely, let $Z(f)\cap Z(g)\neq \emptyset$ and $y\in Z(f)\cap Z(g)$. Take $h=\chi_{\{y\}}$. Then $h\in C(X)_F$ and both $hf=0$ and $hg=0$. So $h$ is adjacent to both $f$ and $g$ in the graph $\Gamma(C_c(X)_F)$.
	\end{proof} 
	
	\begin{lemma} \label{deg 3}
		For any two vertices $f,g$ there are distinct vertices $h_1$ and $h_2$ in $\Gamma(C_c(X)_F)$ such that $f$ is adjacent to $h_1$, $h_1$ is adjacent to $h_2$ and $h_2$ is adjacent to $g$ if $Z(f)\cap Z(g)= \emptyset$.
	\end{lemma}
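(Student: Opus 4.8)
The plan is to exhibit an explicit path $f - h_1 - h_2 - g$ of length three by placing characteristic functions at two suitably chosen points, with the disjointness hypothesis $Z(f)\cap Z(g)=\emptyset$ being exactly what makes the middle edge available. Since the vertex set of $\Gamma(C_c(X)_F)$ consists of nonzero zero divisors, I first record that $f$ and $g$ are non-units, so by the unit characterization (an element is a unit if and only if its zero set is empty) we have $Z(f)\neq\emptyset$ and $Z(g)\neq\emptyset$. Because these two zero sets are disjoint, any $a\in Z(f)$ is distinct from any $b\in Z(g)$; I fix such points $a$ and $b$ with $a\neq b$. In particular $X$ contains the two distinct points $a,b$.

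Next I would set $h_1=\chi_{\{a\}}$ and $h_2=\chi_{\{b\}}$. Both belong to $C_c(X)_F$ and are nonzero, and since $a\neq b$ their product $\chi_{\{a\}}\chi_{\{b\}}=0$ shows each is a zero divisor, hence a genuine vertex; moreover $h_1\neq h_2$ as $a\neq b$. To verify the three required adjacencies I would invoke Lemma \ref{deg1}. Using $Z(\chi_{\{a\}})=X\setminus\{a\}$ and $Z(\chi_{\{b\}})=X\setminus\{b\}$, one has $Z(f)\cup Z(h_1)=X$ because $a\in Z(f)$, then $Z(h_1)\cup Z(h_2)=X\setminus(\{a\}\cap\{b\})=X$ because $a\neq b$, and finally $Z(h_2)\cup Z(g)=X$ because $b\in Z(g)$. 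Thus $f$ is adjacent to $h_1$, $h_1$ to $h_2$, and $h_2$ to $g$, giving the desired path.

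This argument carries essentially no obstruction: the construction is immediate and the only bookkeeping is checking that $h_1,h_2$ are legitimate (nonzero) zero divisors and distinct, both of which follow at once from $a\neq b$ together with $X$ possessing the two points $a,b$. (That $h_1\neq f$ and $h_2\neq g$, if one wishes to read the conclusion as a genuine walk avoiding loops, also holds automatically, since $C_c(X)_F$ is reduced and so no nonzero element squares to zero.) The conceptual content worth emphasizing is that the hypothesis $Z(f)\cap Z(g)=\emptyset$ is precisely what forces $a\neq b$ and hence makes the edge $h_1h_2=0$ possible; when instead $Z(f)\cap Z(g)\neq\emptyset$ one gets a shorter connection through a single common neighbour, as in Lemma \ref{deg 2}. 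Read together, Lemmas \ref{deg 2} and \ref{deg 3} show that any two vertices of $\Gamma(C_c(X)_F)$ are joined by a path of length at most three.
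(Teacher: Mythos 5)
Your proposal is correct and follows essentially the same route as the paper: pick $a\in Z(f)$ and $b\in Z(g)$ (disjointness forcing $a\neq b$), take $h_1=\chi_{\{a\}}$, $h_2=\chi_{\{b\}}$, and verify the three adjacencies via Lemma \ref{deg1}. Your extra checks that $Z(f),Z(g)$ are non-empty and that $h_1,h_2$ are distinct, nonzero zero divisors are worthwhile details the paper leaves implicit.
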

	
	\begin{proof}
		Let $Z(f)\cap Z(g)= \emptyset$. Let us choose $x\in Z(f)$ and $y\in Z(g)$. Consider two functions $h_1=\chi_{\{x\}}$ and $h_2=\chi_{\{y\}}$. Then $Z(h_1)=X\setminus \{x\}$ and $Z(h_2)=X\setminus \{y\}.$ So $Z(h_1)\cup Z(f)=X$, $Z(h_2)\cup Z(g)=X$ and $Z(h_1)\cup Z(h_2)=X$. Hence by  Lemma \ref{deg1}, we can say that $f$ is adjacent to $h_1$, $h_1$ is adjacent to $h_2$ and $h_2$ is adjacent to $g$.
	\end{proof}
	
	\begin{definition}
		For two vertices $f,g$ in any graph $G$ , $d(f,g)$ is defined as the length of the smallest path between $f$ and $g$.
	\end{definition}

	\begin{theorem}\label{deg-thm}
		For any two vertices $f,g$ in the	graph $\Gamma(C_c(X)_F)$, we have the following outputs: 
		\begin{enumerate}
			\item[(i)] $d(f,g)=1$ if and only if $Z(f)\cup Z(g)=X$.
			\item[(ii)] $d(f,g)=2$ if and only if $Z(f)\cup Z(g)\neq X$ and $Z(f)\cap Z(g)\neq \emptyset$.
			\item[(iii)] $d(f,g)=3$ if and only if $Z(f)\cup Z(g)\neq X$ and $Z(f)\cap Z(g) = \emptyset$.
		\end{enumerate}
	\end{theorem}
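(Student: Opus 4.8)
The plan is to prove each of the three biconditionals by computing the relevant graph distances directly, relying on the three preceding lemmas (Lemma \ref{deg1}, Lemma \ref{deg 2}, and Lemma \ref{deg 3}) which already encode the combinatorial content in terms of zero sets. Since $d(f,g)$ for two nonzero zero divisors in this connected graph can only be $1$, $2$, or $3$ (adjacency, a common neighbour, or a length-three path), the three cases are mutually exclusive and exhaustive, so it suffices to establish the forward or backward direction of each and let the trichotomy close the remaining implications.

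For part (i), the statement $d(f,g)=1$ means precisely that $f$ and $g$ are adjacent, so this is just a restatement of Lemma \ref{deg1}. For part (ii), I would argue as follows. First suppose $Z(f)\cup Z(g)\neq X$ and $Z(f)\cap Z(g)\neq\emptyset$; the first condition gives $d(f,g)\neq 1$ by Lemma \ref{deg1}, while the second gives, via Lemma \ref{deg 2}, a common neighbour $h$ adjacent to both $f$ and $g$, producing a path of length $2$, so $d(f,g)=2$. Conversely, if $d(f,g)=2$, then $f$ and $g$ are not adjacent, so $Z(f)\cup Z(g)\neq X$ by Lemma \ref{deg1}, and there exists a common neighbour, so $Z(f)\cap Z(g)\neq\emptyset$ by the forward direction of Lemma \ref{deg 2}.

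For part (iii), the forward implication of the ``if'' content is supplied by Lemma \ref{deg 3}: when $Z(f)\cap Z(g)=\emptyset$ there is a length-three path, and when additionally $Z(f)\cup Z(g)\neq X$ we cannot have $d(f,g)=1$, while the absence of a common neighbour (again Lemma \ref{deg 2}, now read as: $Z(f)\cap Z(g)=\emptyset$ forbids a common neighbour) rules out $d(f,g)=2$, so $d(f,g)=3$. The converse follows because the only remaining alternative consistent with $d(f,g)=3$ is $Z(f)\cup Z(g)\neq X$ (else $d=1$) together with $Z(f)\cap Z(g)=\emptyset$ (else $d\leq 2$ by part (ii)). The main point to verify carefully is the mutual exclusivity used to convert the ``if'' direction of Lemma \ref{deg 3} into a genuine ``iff'' with value exactly $3$: I must confirm that no shorter path exists, which is exactly what the failure of the conditions in (i) and (ii) guarantees. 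I anticipate no serious obstacle here, since the three lemmas partition the possibilities cleanly; the only care needed is to note explicitly that the graph is connected with diameter at most $3$ so that $d(f,g)\in\{1,2,3\}$ exhausts all cases.
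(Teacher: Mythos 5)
Your proposal is correct and follows essentially the same route as the paper: both arguments reduce each part to Lemmas \ref{deg1}, \ref{deg 2} and \ref{deg 3}, using adjacency for $d=1$, a common neighbour for $d=2$, and the length-three path plus the failure of the shorter alternatives for $d=3$. Your explicit appeal to the trichotomy ($d(f,g)\in\{1,2,3\}$, with the three zero-set conditions mutually exclusive and exhaustive) is just a slightly more economical packaging of the two-directional checks the paper carries out.
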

	
	\begin{proof} (i) It follows from Lemma \ref{deg1}.
		\par  (ii) Let $d(f,g)=2.$ So $f$ and $g$ are not adjacent to each other. Then by Lemma \ref{deg1},  $Z(f)\cup Z(g)\neq X$. Moreover, there is a vertex $h\in \Gamma(C(X)_F)$ such that $h$ is adjacent to both $f$ and $g$. Hence by Lemma \ref{deg 2}, we have $Z(f)\cap Z(g)\neq \emptyset$.
		\par Conversely, let $Z(f)\cup Z(g)\neq X$ and $Z(f)\cap Z(g)\neq \emptyset$. Then by Lemma \ref{deg1} and \ref{deg 2}, $f$ and $g$ are not adjacent and  there is a third vertex $h$, adjacent to both $f$ and $g$. Hence $d(f,g)=2.$
		\par  (iii) Let $d(f,g)=3$. Then by Lemmas \ref{deg1} and \ref{deg 2}, we get  $Z(f)\cup Z(g)\neq X$ and $Z(f)\cap Z(g) = \emptyset$.
		\par Conversely, let $Z(f)\cup Z(g)\neq X$ and $Z(f)\cap Z(g) = \emptyset$. Then by Lemma \ref{deg1} and \ref{deg 2}, $f$ and $g$ are not adjacent to each other and there is no common vertex $h$ which is adjacent to both $f$ and $g$. Hence $d(f,g)\geq 3.$ Since $Z(f)\cap Z(g) = \emptyset$, applying Lemma \ref{deg 3}, there are two distinct vertices $h_1$ and $h_2$ such that $f$ is adjacent to $h_1$, $h_1$ is adjacent to $h_2$ and $h_2$ is adjacent to $g$. As a consequence, $d(f,g)=3.$
	\end{proof}

	\begin{definition}
		The maximum of all possible $d(f,g)$ is called the diameter of a graph $G$ and it is denoted by $diam(G)$. Also, the length of the smallest cycle in the graph $G$ is called the girth of the graph $G$ and it is denoted by $gr(G)$. If there does not exist any cycle in the graph $G$, we declare $gr (G)=\infty.$ 
	\end{definition}
	
	\begin{theorem}\label{diam}
		If a space $X$ contains at least three elements, then\\ $diam(\Gamma(C_c(X)_F))=gr(\Gamma(C_c(X)_F))=3.$
	\end{theorem}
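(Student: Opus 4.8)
The plan is to read both equalities straight off the distance classification already established in Theorem \ref{deg-thm} and then exhibit explicit vertices that realize the extremal values. Since Theorem \ref{deg-thm} shows that every pair of vertices lies at distance $1$, $2$, or $3$, the graph $\Gamma(C_c(X)_F)$ is connected and $diam(\Gamma(C_c(X)_F))\le 3$ is automatic. On the other hand, because the ring is reduced, no $f$ satisfies $f^2=0$ with $f\ne 0$, so no vertex is adjacent to itself; as $\Gamma(C_c(X)_F)$ is therefore a simple graph, it admits no cycle of length less than $3$ and hence $gr(\Gamma(C_c(X)_F))\ge 3$. Thus it remains only to produce (a) two vertices at distance exactly $3$ and (b) a triangle, and in both constructions the hypothesis that $X$ has at least three points is precisely what I will use.

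For the diameter I would fix three distinct points $a,b,c\in X$ and set $f=\underline{1}-\chi_{\{a\}}$ and $g=\underline{1}-\chi_{\{b\}}$. Both lie in $C_c(X)_F$, and since $Z(f)=\{a\}\ne\emptyset$ and $Z(g)=\{b\}\ne\emptyset$, each has nonempty zero set and is therefore a genuine nonzero zero divisor, i.e.\ a vertex; moreover $f\ne g$ as their zero sets differ. Because $a\ne b$ we have $Z(f)\cap Z(g)=\emptyset$, while $Z(f)\cup Z(g)=\{a,b\}\ne X$, the third point $c$ guaranteeing that the union is not all of $X$. Theorem \ref{deg-thm}(iii) then gives $d(f,g)=3$, so $diam(\Gamma(C_c(X)_F))=3$.

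For the girth I would use the same three points and take the characteristic functions $\chi_{\{a\}}$, $\chi_{\{b\}}$, $\chi_{\{c\}}$, which are three distinct vertices (each has nonempty zero set $X\setminus\{\cdot\}$ and is nonzero). Their zero sets are $X\setminus\{a\}$, $X\setminus\{b\}$, $X\setminus\{c\}$, and any two of these cover $X$ since the omitted points differ; by Lemma \ref{deg1} each of the three pairs is adjacent. Hence $\chi_{\{a\}}$, $\chi_{\{b\}}$, $\chi_{\{c\}}$ form a triangle, giving $gr(\Gamma(C_c(X)_F))\le 3$, and together with the lower bound $gr(\Gamma(C_c(X)_F))=3$.

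I do not expect a real obstacle here, since the substantive work has been front-loaded into Theorem \ref{deg-thm} and Lemma \ref{deg1}; the only care needed is in checking that the exhibited functions are bona fide vertices (nonzero zero divisors) and mutually distinct, which follows from the characterization of zero divisors as exactly the functions with nonempty zero set. The single point where the three-element hypothesis is indispensable is in arranging $Z(f)\cup Z(g)\ne X$ for the distance-$3$ pair: on a two-point space every such union is forced to equal $X$, collapsing the diameter to $2$.
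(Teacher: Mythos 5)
Your proof is correct and follows essentially the same route as the paper: the same witnesses $\underline{1}-\chi_{\{a\}}$, $\underline{1}-\chi_{\{b\}}$ for a distance-$3$ pair via Theorem \ref{deg-thm}(iii), and the same triangle $\chi_{\{a\}},\chi_{\{b\}},\chi_{\{c\}}$ via Lemma \ref{deg1}. Your additional checks (that the witnesses are genuine distinct vertices, and that girth $\geq 3$ since the graph is simple) only make the argument more complete.
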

	
	\begin{proof}
		Let us take three distinct points $x,y,z$ in $X$. Consider the functions  $f=1-\chi_{\{x\}}$ and $g=1-\chi_{\{y\}}$. Then $Z(f)=\{x\}$ and $Z(g)=\{y\}$. Thus $Z(f)\cup Z(g)\neq X$ because $z\notin Z(f)\cup Z(g).$  As $Z(f)\cap Z(g)=\emptyset$, by Theorem \ref{deg-thm}(iii), $d(f,g)=3.$ But we know that $d(f,g)\leq 3$ for all vertices $f,g$ in $\Gamma(C_c(X)_F)$. Hence we have $diam(\Gamma(C_c(X)_F))=3.$
		\par For the girth of the graph, take $h_1=\chi_{\{x\}}$, $h_2=\chi_{\{y\}}$ and $h_3=\chi_
		{\{z\}}$. Then the union of any two zero sets among $Z(h_1)$, $Z(h_2)$ and $Z(h_3)$ is $X$. Thus $h_1,h_2$ and $h_3$ form a triangle. Since there is no loop in the graph $\Gamma(C_c(X)_F)$, the girth $gr(\Gamma(C_c(X)_F))=3$.
	\end{proof}
	
	\begin{theorem}
		$diam(\Gamma(C_c(X)_F))=2$ if and only if $gr(\Gamma(C_c(X)_F))=\infty$ if and only if  $|X|=2$. 
	\end{theorem}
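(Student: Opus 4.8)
The plan is to lean on Theorem~\ref{diam}, which already settles every space with at least three points, and to analyse the small cases by hand; throughout I assume $C_c(X)_F$ is not a field, so that the graph actually has vertices, which forces $|X|\geq 2$.

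I would dispatch the two \emph{reverse} implications together, as contrapositives powered by Theorem~\ref{diam}. If $|X|\geq 3$, that theorem yields $diam(\Gamma(C_c(X)_F))=gr(\Gamma(C_c(X)_F))=3$; in particular $diam\neq 2$ and $gr=3\neq\infty$. Consequently any space realizing $diam=2$, and equally any space realizing $gr=\infty$, must satisfy $|X|=2$. This reduces the whole theorem to the forward direction at $|X|=2$.

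For that direction I would first pin down the graph when $|X|=2$. A finite $T_1$ space is discrete, so $C_c(X)_F\cong\mathbb{R}\times\mathbb{R}$; writing $X=\{a,b\}$, a nonzero $f$ is a zero-divisor exactly when $Z(f)$ is a singleton, so the vertex set partitions into the classes $\mathcal{A}=\{f:Z(f)=\{a\}\}$ and $\mathcal{B}=\{f:Z(f)=\{b\}\}$. By Lemma~\ref{deg1}, $f$ and $g$ are adjacent iff $Z(f)\cup Z(g)=X$, which occurs precisely when they lie in opposite classes; hence $\Gamma(C_c(X)_F)$ is complete bipartite across $\mathcal{A}\sqcup\mathcal{B}$. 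The diameter claim is then immediate: cross-class vertices are adjacent and same-class vertices are joined by a length-two path through any vertex of the other side, so $diam=2$.

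The girth is the delicate half and the step I expect to be the main obstacle. Since every edge runs between $\mathcal{A}$ and $\mathcal{B}$, any cycle must alternate sides and so have even length; the graph is therefore triangle-free, and this is exactly the feature separating $|X|=2$ from the $|X|\geq 3$ regime in which Theorem~\ref{diam} produces a triangle. Pushing from ``triangle-free'' to the asserted value $gr=\infty$ requires ruling out even cycles as well, and here the argument must confront the sizes of $\mathcal{A}$ and $\mathcal{B}$: a four-cycle $f_1-g_1-f_2-g_2-f_1$ appears the moment each class contains two distinct vertices. I would therefore make the girth half of the proof turn on a careful cardinality count of $\mathcal{A}$ and $\mathcal{B}$, since it is precisely the existence or non-existence of such a four-cycle that decides the value of $gr(\Gamma(C_c(X)_F))$, and hence whether the stated equivalence with $gr=\infty$ can be established.
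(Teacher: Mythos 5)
Your handling of the reverse implications and of the diameter is, in substance, exactly the paper's argument: the paper likewise invokes Theorem \ref{diam} to force $|X|=2$ whenever $diam=2$ or $gr=\infty$ (discarding $|X|=1$ because then there are no zero divisors), and for $X=\{x,y\}$ it too observes that vertices with equal zero sets are constant multiples of one another (hence non-adjacent, at distance $2$ through the opposite class) while vertices with different zero sets are adjacent, giving $diam(\Gamma(C_c(X)_F))=2$. So up to and including the bipartite description $\mathcal{A}\cup\mathcal{B}$, your proposal and the paper's proof coincide.

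The step you flag as the main obstacle is a genuine one, and it resolves \emph{against} the statement: your proposal cannot be completed, because the girth clause is false, and the paper's own proof of it is fallacious. The paper argues that ``since there are only two distinct zero sets, there cannot exist any cycle,'' but a cycle is free to revisit zero sets; it only needs distinct \emph{vertices}. Each class contains continuum many vertices (all nonzero scalar multiples of $\chi_{\{x\}}$, respectively $\chi_{\{y\}}$), so the cardinality count you call for gives $|\mathcal{A}|=|\mathcal{B}|=\mathfrak{c}\geq 2$, and the four-cycle you describe does exist: $\chi_{\{x\}}$, $\chi_{\{y\}}$, $2\chi_{\{x\}}$, $2\chi_{\{y\}}$, back to $\chi_{\{x\}}$, with every consecutive product zero by Lemma \ref{deg1}. (Note the paper itself exhibits $f$ and $2f$ as distinct same-class vertices, the very pair that manufactures this quadrilateral.) Since the graph is complete bipartite it is triangle-free, so $gr(\Gamma(C_c(X)_F))=4$, not $\infty$, when $|X|=2$; of the chain of equivalences only $diam(\Gamma(C_c(X)_F))=2 \Leftrightarrow |X|=2$ survives. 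Your instinct to make the girth hinge on whether each class has two vertices was exactly right --- pushed to its conclusion it yields a counterexample to the theorem rather than a proof of it.
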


	\begin{proof}
		Let $X=\{x,y\}$. Then for any vertex $f$ of $\Gamma(C_c(X)_F)$,  $Z(f)$ must be singleton. Let us consider $f=\chi_{\{x\}}$ and $g=\chi_{\{y\}}$. Then $f$ and $2f$ are not adjacent to each other whereas $g$ is adjacent to both $f$ and $2f$. Now for two vertices $f$ and $g$, if their zero sets are same, then they must be constant multiple of each other and thus they cannot be adjacent and their distance is 2 and if their zero sets are not same then they are adjacent to each other. Hence for any two vertices $f,g$, $d(f,g)$ is either 1 or 2. Thus we conclude that $diam(\Gamma(C_c(X)_F))=2.$	
		\par Since there are only two distinct zero sets, there cannot exist any cycle in the graph $\Gamma(C_c(X)_F)$. Thus the girth $gr(\Gamma(C_c(X)_F))=\infty$.
		\par Now suppose $diam(\Gamma(C_c(X)_F))=2$ or girth $gr(\Gamma(C_c(X)_F))=\infty$. By Theorem \ref{diam}, we see that if $X$ contains more than two points then diameter and girth both are $3$. Hence we have $|X|=2$ because if $X$ is singleton, then there is no zero divisor.
	\end{proof}
	
	\begin{definition}
		For a vertex $f$ in a graph $G$, the associated number $e(f)$ is defined by $e(f)=\max\{d(f,g):g (\neq f)$ is a vertex in $G\}$. The vertex $g$ with smallest associated number is called a centre of the graph. The  associated number of the centre vertex in $G$ is called the radius of the graph and it is denoted by $\rho(G)$. 
	\end{definition}
	
	\par The following result is about the associated number of any vertex in the graph $\Gamma(C_c(X)_F)$.
	\begin{lemma}
		For any vertex $f$ in the graph $\Gamma(C_c(X)_F)$, we have 
		\[e(f)=\left\{
		\begin{array}{ll}
			2 \text{ if } X\setminus Z(f)\text{ is singleton}\\
			3 \text{ otherwise.}
		\end{array}
		\right.\]
	\end{lemma}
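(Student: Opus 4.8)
The plan is to reduce the whole computation to the distance formula of Theorem \ref{deg-thm}, which already records that $d(f,g)\in\{1,2,3\}$ for every pair of vertices. Since $e(f)=\max\{d(f,g):g\neq f\}$, computing $e(f)$ amounts to deciding the largest value of $d(f,g)$ that is attainable, and by Theorem \ref{deg-thm} this is governed entirely by the two set-theoretic conditions $Z(f)\cup Z(g)=X$ and $Z(f)\cap Z(g)=\emptyset$. Throughout I will use that a vertex is a nonzero zero divisor, so its zero set is a proper nonempty subset of $X$: $Z(f)\neq\emptyset$ because $f$ is not a unit, and $Z(f)\neq X$ because $f\neq\underline 0$; in particular $X\setminus Z(f)\neq\emptyset$. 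The argument then splits into the two cases named in the statement.

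First I would treat the case $X\setminus Z(f)=\{a\}$, so that $Z(f)=X\setminus\{a\}$. The key observation is that no vertex $g$ can sit at distance $3$ from $f$: by Theorem \ref{deg-thm}(iii) this would require $Z(f)\cap Z(g)=\emptyset$ and $Z(f)\cup Z(g)\neq X$ simultaneously. The first condition forces $Z(g)\subseteq\{a\}$, and since $Z(g)\neq\emptyset$ this gives $Z(g)=\{a\}$; but then $Z(f)\cup Z(g)=X$, contradicting the second condition. Hence $e(f)\leq 2$. For the matching lower bound I would exhibit a vertex at distance $2$, namely $g=2f$: it is again a vertex with $Z(g)=Z(f)$ and $g\neq f$, and $Z(f)\cup Z(g)=X\setminus\{a\}\neq X$ while $Z(f)\cap Z(g)=Z(f)\neq\emptyset$, so $d(f,2f)=2$ by Theorem \ref{deg-thm}(ii). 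Thus $e(f)=2$.

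Next I would treat the case where $X\setminus Z(f)$ is not a singleton; being nonempty, it then contains two distinct points $a,b$. Here it suffices to produce a single witness at distance $3$, since all distances are at most $3$. I would take $g=\underline 1-\chi_{\{a\}}$, which lies in $C_c(X)_F$, has $Z(g)=\{a\}\neq\emptyset$ (hence is a zero divisor), is nonzero, and is distinct from $f$ since $a\in Z(g)$ but $a\notin Z(f)$. Because $a\notin Z(f)$ we get $Z(f)\cap Z(g)=\emptyset$, and because $b\notin Z(f)\cup\{a\}=Z(f)\cup Z(g)$ we get $Z(f)\cup Z(g)\neq X$; so $d(f,g)=3$ by Theorem \ref{deg-thm}(iii), giving $e(f)=3$.

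The only delicate point is book-keeping rather than depth: one must verify that each chosen witness $g$ is a genuine vertex (nonzero zero divisor) and is distinct from $f$, and—in the singleton case—that the two defining conditions for distance $3$ are mutually incompatible. Both verifications are immediate once one records that for a vertex the zero set is a proper nonempty subset of $X$, and no machinery beyond Theorem \ref{deg-thm} together with the elementary membership properties of the characteristic functions $\chi_{\{a\}}$ is needed.
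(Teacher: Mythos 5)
Your proposal is correct and follows essentially the same route as the paper: both cases reduce to the distance trichotomy of Theorem \ref{deg-thm}, with the same witness $\underline 1-\chi_{\{a\}}$ in the non-singleton case. The only (welcome) difference is cosmetic: in the singleton case you rule out distance $3$ by showing the two defining conditions are incompatible and then explicitly exhibit $2f$ at distance $2$, whereas the paper instead splits on whether $x_\circ\in Z(g)$ and leaves the lower-bound witness implicit.
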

	
	\begin{proof}
		Suppose $X\setminus Z(f)=\{x_\circ\}$. Let $g$ be any vertex in $\Gamma(C_c(X)_F)$ such that $g\neq f$. Then there are only two possibilities, namely $x_\circ\in Z(g)$ or $x_\circ\notin Z(g)$. If $Z(g)$ contains $x_\circ$ then $fg=0$. In this case $f$ and $g$ are adjacent to each other. Thus $d(f,g)=1$. On the other hand, if $Z(g)$ does not contain $x_\circ$ then $Z(g)\subseteq Z(f)$. This implies that $Z(f)\cap Z(g) = Z(g)\neq \emptyset$ and $Z(f)\cup Z(g)=Z(f)\neq \emptyset$. Therefore by Theorem \ref{deg-thm}, $d(f,g)=2.$ Hence we prove that $e(f)=2.$
		\par On the other hand, let $X\setminus Z(f)$ contains at least two points, say $x_\circ$ and $y_\circ$. By Theorem \ref{deg-thm}, we see that $e(f)\leq 3.$ Now choose $g=1-\chi_{\{x_\circ\}}$. Then $Z(g)=\{x_\circ\}$. Clearly, $Z(f)\cap Z(g)=\emptyset$ and $Z(f)\cup Z(g)\neq X$ because $y_\circ$ does not belong to the union. Hence by Theorem \ref{deg-thm}, for this particular $g$, we get $d(f,g)=3$. Thus we obtain that $e(f)=3$.
	\end{proof}
	
	\begin{corollary}\label{radius}
		The radius $\rho(\Gamma(C_c(X)_F))$ of the graph $\Gamma(C_c(X)_F)$ is always 2.
	\end{corollary}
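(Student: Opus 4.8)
The plan is to read off the result directly from the preceding lemma, since by the definition of radius $\rho(G)$ is the smallest associated number $e(f)$ as $f$ ranges over all vertices of the graph. First I would record that the lemma forces $e(f)\in\{2,3\}$ for \emph{every} vertex $f$ of $\Gamma(C_c(X)_F)$, so no vertex can have eccentricity smaller than $2$; this already yields the lower bound $\rho(\Gamma(C_c(X)_F))\geq 2$.

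For the matching upper bound I would exhibit a single vertex whose associated number equals $2$. Fixing any point $x_\circ\in X$, the characteristic function $f=\chi_{\{x_\circ\}}$ lies in $C_c(X)_F$ and satisfies $X\setminus Z(f)=COZ(f)=\{x_\circ\}$, a singleton. Provided $X$ has at least two points, we have $Z(f)=X\setminus\{x_\circ\}\neq\emptyset$, so $f$ is a nonzero zero divisor and hence a genuine vertex of the graph; the preceding lemma then gives $e(f)=2$. Combining the two bounds yields $\rho(\Gamma(C_c(X)_F))=2$ exactly.

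There is no genuine obstacle here, as the statement is an immediate consequence of the eccentricity computation; the only point deserving a word of care is the degenerate case $|X|=1$, in which $C_c(X)_F$ is a field with no nonzero zero divisors, so $\Gamma(C_c(X)_F)$ is empty and the radius is undefined. The corollary is therefore understood under the standing assumption that the zero divisor graph is nonempty, that is $|X|\geq 2$, which is exactly the range in which the singleton-cozero vertex $\chi_{\{x_\circ\}}$ used above exists.
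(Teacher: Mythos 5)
Your proposal is correct and follows essentially the same route as the paper: both invoke the preceding eccentricity lemma and exhibit the vertex $\chi_{\{x_\circ\}}$, whose cozero set is a singleton, to realize $e(f)=2$. Your additional remarks on the lower bound $e(f)\geq 2$ and the degenerate case $|X|=1$ merely make explicit what the paper leaves implicit.
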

	
	\begin{proof}
		We can always consider a vertex $f$ with $e(f)=2$, for example take $f=\chi_{\{x_\circ\}}.$ Then $X\setminus Z(f)$ is singleton. So we have radius of $\Gamma(C_c(X)_F)=2$.
	\end{proof}
	
	
	\begin{definition}
		A graph $G$ is said to be
		\begin{enumerate}
			\item[(i)] triangulated if every vertex of the graph $G$ is a vertex of a triangle.
			
			\item[(ii)] hyper-triangulated if every  edge of the graph $G$ is an edge of a triangle.
		\end{enumerate}
	\end{definition}
	
	\begin{theorem}\label{try}
		The graph $\Gamma(C_c(X)_F)$ is neither triangulated nor hyper-triangulated.
	\end{theorem}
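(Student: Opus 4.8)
The plan is to reduce both non-properties to the zero-set dictionary already built in Lemmas \ref{deg1} and \ref{deg 2}. The decisive reformulation I would record first is: \emph{an edge $\{f,g\}$ of $\Gamma(C_c(X)_F)$ lies on a triangle if and only if $Z(f)\cap Z(g)\neq\emptyset$}. Indeed, such an edge sits in a triangle precisely when there is a third vertex $h$ adjacent to both $f$ and $g$, and by Lemma \ref{deg 2} this happens exactly when $Z(f)\cap Z(g)\neq\emptyset$; note that any such $h$ is automatically distinct from $f$ and $g$, since a loop $h\cdot h=0$ is impossible in the reduced ring $C_c(X)_F$. With this in hand, both halves of the statement follow from a single well-chosen vertex.

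For the failure of hyper-triangulation, I would fix a point $x_\circ\in X$ (here one uses $|X|\geq 2$, so that genuine zero divisors exist) and take $f=\chi_{\{x_\circ\}}$ and $g=\underline 1-\chi_{\{x_\circ\}}$. Then $Z(f)=X\setminus\{x_\circ\}$ and $Z(g)=\{x_\circ\}$, so $Z(f)\cup Z(g)=X$ and $f,g$ are adjacent by Lemma \ref{deg1}; both are nonzero zero divisors, hence vertices. Since $Z(f)\cap Z(g)=\emptyset$, the reformulation above shows the edge $\{f,g\}$ lies on no triangle, so $\Gamma(C_c(X)_F)$ is not hyper-triangulated.

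For the failure of triangulation, I would show the \emph{same} vertex $g=\underline 1-\chi_{\{x_\circ\}}$, whose zero set $Z(g)=\{x_\circ\}$ is a singleton, lies on no triangle at all. Let $w$ be any neighbour of $g$. By Lemma \ref{deg1}, $Z(g)\cup Z(w)=X$, forcing $X\setminus\{x_\circ\}\subseteq Z(w)$. If the edge $\{g,w\}$ extended to a triangle, then by the reformulation $Z(g)\cap Z(w)\neq\emptyset$, i.e. $x_\circ\in Z(w)$; combined with $X\setminus\{x_\circ\}\subseteq Z(w)$ this gives $Z(w)=X$, so $w=\underline 0$, contradicting that $w$ is a vertex. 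Hence no edge at $g$ completes to a triangle, so $g$ is the vertex of no triangle and $\Gamma(C_c(X)_F)$ is not triangulated. Equivalently, a triangle forces its three vertices to have pairwise disjoint nonempty cozero sets, so each vertex of a triangle must have zero set of cardinality at least two, which $g$ fails.

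The computations here are routine; the only points needing care are bookkeeping ones: verifying that the chosen $f,g$ really are nonzero zero divisors (which is where the hypothesis $|X|\geq 2$ enters, the degenerate cases $|X|\leq 1$ producing an empty vertex set), and confirming that the common neighbour supplied by Lemma \ref{deg 2} is distinct from the two endpoints. I expect the main conceptual step to be isolating the edge-in-a-triangle criterion at the outset, after which both non-properties are witnessed simultaneously by the single function $\underline 1-\chi_{\{x_\circ\}}$.
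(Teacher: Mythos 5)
Your proof is correct and follows essentially the same route as the paper: both use the witnesses $\chi_{\{x_\circ\}}$ and $\underline 1-\chi_{\{x_\circ\}}$ together with Lemmas \ref{deg1} and \ref{deg 2}, the only difference being that you first isolate the edge-on-a-triangle criterion $Z(f)\cap Z(g)\neq\emptyset$ and reuse it for both parts, while the paper argues the triangulation half directly by noting that any two neighbours of $\underline 1-\chi_{\{x_\circ\}}$ must share the zero set $X\setminus\{x_\circ\}$ and hence cannot be adjacent. Your added care about the distinctness of the common neighbour and the implicit hypothesis $|X|\geq 2$ is a minor but welcome refinement.
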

	
	\begin{proof}
		At first, we prove that the graph $\Gamma(C_c(X)_F)$ is not triangulated. For this, let us consider $x_\circ\in X.$ Now define $f= 1-\chi_{\{x_\circ\}}.$ Then $Z(f) = \{x_\circ\}.$ We claim that there is no triangle containing $f$ as a vertex. If possible, let $g,h$ be two vertices such that $f,g,h$ make a triangle. Then by Lemma \ref{deg1}, $Z(g) = X-\{x_\circ\}=Z(h).$ But again by Lemma \ref{deg1}, $g$ and $h$ cannot be adjacent. This creates a contradiction that $f,g,h$ make a triangle. 
		\par Now to prove hypertriangulated,  let us take a point $x_\circ\in X$. Now take two functions $f=\chi_{\{x_\circ\}}$ and $g=1-\chi_{\{x_\circ\}}$. Then $Z(f)\cup Z(g)=X$ and $Z(f)\cap Z(g)=\emptyset$. Then by Lemma \ref{deg 2}, it is not possible to get a triangle that contains the edge connecting $f$ and $g$. So the graph $\Gamma(C_c(X)_F)$ is neither triangulated nor hyper-triangulated.
	\end{proof}
	
	The above mentioned  result is totally different from the case of $C(X)$. In fact, we have 
	
	\begin{proposition}[\cite{zero}]
		The following results are true.
		\begin{enumerate}
			\item [(i)]$\Gamma(C(X))$ is triangulated if and only if $X$ does not contain any non-isolated points. 
			\item[(ii)] $\Gamma(C(X))$ is hyper-triangulated if and only if $X$ is a connected middle $P$-space.
		\end{enumerate}
	\end{proposition}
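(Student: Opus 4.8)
The plan is to prove both parts by translating the graph-theoretic hypotheses into statements about zero sets and cozero sets of $C(X)$, exactly as Lemmas \ref{deg1}--\ref{deg 2} do for $C_c(X)_F$, and then reading off the topological conditions on $X$. The adjacency criterion carries over verbatim: two vertices $f,g$ of $\Gamma(C(X))$ are adjacent iff $Z(f)\cup Z(g)=X$, i.e. $COZ(f)\cap COZ(g)=\emptyset$. The decisive point where $C(X)$ differs from $C_c(X)_F$ is the common-neighbour criterion: a nonzero $h$ with $hf=hg=0$ must have $COZ(h)\subseteq Z(f)\cap Z(g)$, and since $COZ(h)$ is a nonempty cozero set, such an $h$ exists iff $Z(f)\cap Z(g)$ contains a nonempty cozero set, equivalently (by complete regularity) iff $\mathrm{int}(Z(f)\cap Z(g))\neq\emptyset$. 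This is the feature that makes $\chi_{\{x\}}$ unavailable at non-isolated points and is the source of the whole dichotomy. From these two facts I record the two working reformulations: (a) a vertex $f$ lies on a triangle iff $Z(f)$ contains two disjoint nonempty cozero sets; (b) an edge $f\sim g$ lies on a triangle iff $Z(f)\cap Z(g)$ contains a nonempty cozero set.

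For part (i), I would argue that reformulation (a) is governed precisely by the isolated points of $X$. For the direction producing a triangle, start from an arbitrary zero divisor $f$, so $U=\mathrm{int}\,Z(f)\neq\emptyset$; when $X$ has no isolated point $U$ contains two distinct points, and complete regularity yields two disjoint nonempty cozero sets $COZ(g),COZ(h)\subseteq U$, whence $f,g,h$ form a triangle (here $g,h$ are automatically zero divisors because $COZ(f)\subseteq Z(g)\cap Z(h)$ is a nonempty open set). For the obstruction, the witness is $f=1-\chi_{\{x\}}$ at an isolated point $x$: this $f$ lies in $C(X)$ \emph{only} when $\{x\}$ is clopen, its zero set is the singleton $\{x\}$, and a singleton cannot contain two disjoint nonempty cozero sets, so $f$ sits on no triangle. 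Thus the presence of an isolated point is exactly what destroys triangulation, and I would frame the equivalence around this witness, noting that $1-\chi_{\{x\}}$ is continuous precisely at the isolated points of $X$.

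For part (ii), I would rewrite reformulation (b) as: every pair of disjoint nonempty cozero sets $COZ(f),COZ(g)$ has a union that fails to be dense, i.e. $\mathrm{int}\big(Z(f)\cap Z(g)\big)=X\setminus\overline{COZ(f)\cup COZ(g)}\neq\emptyset$. Connectedness enters first: if $X=A\sqcup(X\setminus A)$ with $A$ clopen, then $f=\chi_A,\ g=\chi_{X\setminus A}$ give disjoint cozero sets whose union is all of $X$, an edge on no triangle; hence hyper-triangulation forces $X$ connected. The remaining content---that disjoint cozero sets never have dense union---is exactly the middle $P$-space condition, so I would show hyper-triangulated $\Leftrightarrow$ ``$X$ connected and no two disjoint nonempty cozero sets have dense union'' and then identify the latter conjunct with ``$X$ is a connected middle $P$-space'' via its definition in \cite{zero}. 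To complete a triangle on an edge $f\sim g$ one picks, by complete regularity, a nonzero $h$ with $COZ(h)$ inside the nonempty open set $\mathrm{int}(Z(f)\cap Z(g))$; conversely a dense-union failure produces an edge with empty $\mathrm{int}(Z(f)\cap Z(g))$ and hence, by reformulation (b), no completing vertex.

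The main obstacle I anticipate is not the combinatorics but the two analytic translations ``$Z(f)\cap Z(g)$ contains a nonempty cozero set $\iff$ it has nonempty interior'' and ``$U$ contains two disjoint nonempty cozero sets,'' both of which require complete regularity to manufacture the witnessing functions and to guarantee that the manufactured $g,h$ are genuine vertices (zero divisors) of $\Gamma(C(X))$. The second delicate point is matching the property ``no two disjoint nonempty cozero sets have dense union'' to the exact definition of a middle $P$-space, since that is where the nominal content of part (ii) is concentrated; the connectedness half is the easy clopen-partition argument above.
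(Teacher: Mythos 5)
There is nothing in the paper to compare your argument against: this proposition is stated without proof, quoted from Azarpanah--Motamedi \cite{zero} solely to contrast the behaviour of $\Gamma(C(X))$ with Theorem \ref{try} for $\Gamma(C_c(X)_F)$. So your proposal must be judged as a reconstruction of the proof in \cite{zero}, and as such its skeleton is correct and is essentially the original argument: adjacency still reads $Z(f)\cup Z(g)=X$, but a common neighbour of $f$ and $g$ is a nonzero $h$ with $\emptyset\neq COZ(h)\subseteq Z(f)\cap Z(g)$, which by complete regularity exists iff $\operatorname{int}\bigl(Z(f)\cap Z(g)\bigr)\neq\emptyset$. You correctly isolate this as the exact point where the $\chi_{\{x\}}$-based Lemmas \ref{deg 2} and \ref{deg 3} of the paper break down for $C(X)$, and your triangle constructions (two disjoint nonempty cozero sets inside $\operatorname{int}Z(f)$, obtained from Hausdorffness plus complete regularity, together with the fact that the zero divisors of $C(X)$ are precisely the nonzero $f$ with $\operatorname{int}Z(f)\neq\emptyset$) are sound.

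Two caveats. First, what you actually prove in (i) is that $\Gamma(C(X))$ is triangulated iff $X$ has no \emph{isolated} points; that is the theorem of \cite{zero}, and the wording printed in the statement (``does not contain any non-isolated points'') must be a misprint, since your own witness $f=1-\chi_{\{x_\circ\}}$ shows that a discrete space, in which every point is isolated, is not triangulated. You corrected this silently; you should flag it explicitly. Second, in (ii) your reduction ``hyper-triangulated iff no two disjoint nonempty cozero sets have dense union'' is right (every such pair of cozero sets does arise from an edge, since $COZ(f)\cap COZ(g)=\emptyset$ forces $fg=0$), but you leave the identification with ``connected middle $P$-space'' as a definitional matching, and that is where the remaining content sits. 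Be careful there: your condition already \emph{implies} connectedness (a clopen partition of $X$ yields disjoint nonempty cozero sets whose union is all of $X$), whereas \cite{zero} lists connectedness as a separate hypothesis alongside the middle $P$ property; so ``middle $P$-space'' cannot literally be your dense-union condition, and the matching needs connectedness precisely to dispose of the degenerate case $COZ(f)\cup COZ(g)=X$. Until that last step is verified against the actual definition in \cite{zero}, part (ii) of your proposal is an outline rather than a proof; everything else is complete.
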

	For definition of middle $P$-space see \cite{zero}.
	
	\begin{definition}
		For two vertices $f$ and $g$ in any graph $G$, we denote by $c(f,g)$ the length of the smallest cycle containing $f$ and $g$. If there is no cycle containing $f$ and $g$, we declare $c(f,g)=\infty$.
	\end{definition}
	
	\par In the following theorem, we shall discuss all possible values of  $c(f,g)$ in the graph $\Gamma(C_c(X)_F)$.
	
	\begin{theorem}
		Let $f$ and $g$ be two vertices in the graph $\Gamma(C_c(X)_F)$. Then
		\begin{enumerate}
			\item[(i)] $c(f,g)=3$ if and only if $Z(f)\cup Z(g)=X$ and $Z(f) \cap Z(g)\neq \emptyset$.
			\item[(ii)] $c(f,g)=4$ if and only if $Z(f)\cup Z(g)=X$ and $Z(f) \cap Z(g)= \emptyset$ or $Z(f)\cup Z(g)\neq X$ and $Z(f) \cap Z(g) \neq \emptyset$.
			\item[(iii)] $c(f,g)=6$ if and only if $Z(f)\cup Z(g)\neq X$ and $Z(f) \cap Z(g) = \emptyset$.
		\end{enumerate}
	\end{theorem}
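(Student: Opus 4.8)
The plan is to organize the argument around the four mutually exclusive and jointly exhaustive cases obtained by asking whether $Z(f)\cup Z(g)=X$ and whether $Z(f)\cap Z(g)=\emptyset$. These four cases correspond exactly to the hypotheses appearing in (i), (ii) and (iii), and the asserted values $3,4,6$ are distinct. Hence I would prove each biconditional at once by computing $c(f,g)$ in every case: the ``if'' directions are the computations below, and the ``only if'' directions are then immediate, since the cases exhaust all possibilities and yield distinct values.

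First I would dispose of the two cases with $Z(f)\cup Z(g)=X$. If moreover $Z(f)\cap Z(g)\neq\emptyset$ (part (i)), choose $y\in Z(f)\cap Z(g)$ and set $h=\chi_{\{y\}}$. By Lemma \ref{deg1} the pairs $f,g$ are adjacent, and since $y\in Z(f)$ and $y\in Z(g)$ the vertex $h$ is adjacent to both; evaluation at $y$ shows $h\neq f,g$, so $f,g,h$ form a triangle. As no cycle has length less than $3$, this gives $c(f,g)=3$. If instead $Z(f)\cap Z(g)=\emptyset$ (the first alternative of (ii)), then $f,g$ are adjacent but by Lemma \ref{deg 2} they have no common neighbour, so no triangle contains both of them and $c(f,g)\geq 4$. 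Using $Z(2f)=Z(f)$ and $Z(2g)=Z(g)$ together with Lemma \ref{deg1}, the four distinct nonzero zero-divisors $f,g,2f,2g$ satisfy the adjacencies of the $4$-cycle $f-g-2f-2g-f$, whence $c(f,g)=4$.

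Next I would treat the second alternative of (ii), namely $Z(f)\cup Z(g)\neq X$ and $Z(f)\cap Z(g)\neq\emptyset$. Here $f$ and $g$ are non-adjacent by Lemma \ref{deg1}, so again no triangle contains both and $c(f,g)\geq 4$. Picking $y\in Z(f)\cap Z(g)$ and setting $a=\chi_{\{y\}}$, $b=2\chi_{\{y\}}$, the containment $y\in Z(f)\cap Z(g)$ gives (via Lemma \ref{deg1}) that each of $a,b$ is adjacent to both $f$ and $g$. Since $a(y)=1$, $b(y)=2$ while $f(y)=g(y)=0$, the four vertices are distinct, so $f-a-g-b-f$ is a $4$-cycle and $c(f,g)=4$.

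The main work is part (iii), where $Z(f)\cup Z(g)\neq X$ and $Z(f)\cap Z(g)=\emptyset$, so $d(f,g)=3$ by Theorem \ref{deg-thm}. For the lower bound I would argue that any cycle through $f$ and $g$ decomposes into two internally disjoint paths from $f$ to $g$, each of length at least $d(f,g)=3$; hence every such cycle has length at least $6$ and $c(f,g)\geq 6$. For the matching upper bound, choose $x\in Z(f)$ and $y\in Z(g)$ (necessarily $x\neq y$) and set $h_1=\chi_{\{x\}}$, $h_2=\chi_{\{y\}}$, $h_3=2\chi_{\{x\}}$, $h_4=2\chi_{\{y\}}$. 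Lemma \ref{deg1} shows that $f-h_1-h_2-g$ and $f-h_3-h_4-g$ are paths, and the six vertices are pairwise distinct (the $h_i$ have zero sets $X\setminus\{x\}$ or $X\setminus\{y\}$ and differ by scalars, and none equals $f$ or $g$, as seen by evaluating at $x$ or $y$ and using that $Z(f)\cup Z(g)\neq X$). This produces the $6$-cycle $f-h_1-h_2-g-h_4-h_3-f$, so $c(f,g)=6$. I expect the subtlest points to be the internally-disjoint-paths lower bound in (iii) and the routine but necessary bookkeeping that each constructed function is a genuine nonzero zero-divisor distinct from $f$, $g$ and from the other auxiliary vertices; once all four cases are settled, the three biconditionals follow from exhaustiveness together with the distinctness of the values $3,4,6$.
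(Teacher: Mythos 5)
Your proposal is correct and follows essentially the same route as the paper: the same four-way case split on whether $Z(f)\cup Z(g)=X$ and whether $Z(f)\cap Z(g)=\emptyset$, the same explicit cycles built from characteristic functions and their scalar multiples, and the converse directions obtained from exhaustiveness of the cases together with the distinctness of the values $3,4,6$. The one notable (and welcome) refinement is your lower bound in case (iii), where the general inequality $c(f,g)\geq 2\,d(f,g)=6$ via the two internally disjoint arcs of any cycle through $f$ and $g$ cleanly rules out all lengths below $6$, whereas the paper only argues ad hoc against a $5$-cycle.
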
 
	
	\begin{proof}
		(i) Suppose $Z(f)\cup Z(g)=X$ and $Z(f) \cap Z(g)\neq \emptyset$. Thus by Lemma \ref{deg1} and \ref{deg 2}, $f$ and $g$ are adjacent to each other and there is another vertex $h$ adjacent to both $f$ and $g$. Hence we obtain a triangle with vertices $f,g$ and $h$. This shows that $c(f,g)=3$. Conversely, if $c(f,g)=3$ then there exists a triangle with $f,g$ and $h$ as its vertices for some other vertex $h$. Now using Lemma \ref{deg1} and \ref{deg 2}, we find that $Z(f)\cup Z(g)=X$ and $Z(f) \cap Z(g)\neq \emptyset$.\\
		
		(ii) Consider $Z(f)\cup Z(g)=X$ and $Z(f) \cap Z(g)= \emptyset$. Then using Lemma \ref{deg1}, $f$ and $g$ are adjacent to each other. Now by Lemma \ref{deg 3}, there are vertices $h_1$ and $h_2$ such that $f$ is adjacent to $h_1$, $h_1$ is adjacent to  $h_2$ and $h_2$ is adjacent to $g$. Thus we get a cycle of length 4 with vertices in order, $f,h_1,h_2$ and $ g$. As $Z(f) \cap Z(g)= \emptyset$, by Lemma \ref{deg 2}, there is no triangle containing $f$ and $g$ as its vertices. Thus $c(f,g)=4$.
		
		\par Now suppose $Z(f)\cup Z(g)\neq X$ and $Z(f) \cap Z(g) \neq \emptyset$. Then using Lemma \ref{deg1}, $f$ and $g$ are not adjacent to each other. By Lemma \ref{deg 2}, there exists a vertex $h$ such that $h$ is adjacent to both $f$ and $g$. Then $2h$ is also adjacent to both $f$ and $g$. Thus we get a quadrilateral containing vertices in order $f,h,g$ and $2h$. Again condition $Z(f)\cup Z(g)\neq X$ implies that it is not possible to have a triangle containing $f$ and $g$ as its vertices. So $c(f,g)=4.$
		\par To prove the converse, let $c(f,g)=4.$ Now $Z(f)\cup Z(g)=X$, then we must have  $Z(f)\cap Z(g)=\emptyset$, otherwise we have a triangle having vertices $f$ and $g$. If we have $Z(f)\cup Z(g)\neq X$, then $f$ and $g$ are not adjacent to each other. But there is a quadrilateral containing $f$ and $g$. So there must exist two functions $h_1$ and $h_2$ such that both $h_1$ and $h_2$  are adjacent to both $f$ and $g$. So by Lemma \ref{deg 2}, we have $Z(f) \cap Z(g) \neq \emptyset$.
		
		(iii) Let $Z(f)\cup Z(g)\neq X$ and $Z(f) \cap Z(g) = \emptyset$. Then $f$ and $g$ are not adjacent to each other. As $Z(f) \cap Z(g) = \emptyset$, by Lemma \ref{deg 3}, there are two vertices $h_1$ and $h_2$ in $\Gamma (C(X)_F)$ such that there is a path connecting $f,h_1,h_2 $ and $g$ in order. So immediately there is another path connecting $g,2h_2,2h_1$ and $f$. So we get a cycle of length 6, namely $f,h_1,h_2,g,2h_2,2h_1$ and $f$. Let us make it clear that with the given condition it is not possible to get a cycle of length 5. As $f$ and $g$ are not adjacent to each other, to have a cycle of length 5, we must have a path of length 2 joining $f$ and  $g$ which is not possible as $Z(f)\cap Z(g)=\emptyset.$ This implies that $c(f,g)=6.$
		\par Conversely, let $c(f,g)=6.$ Then by proof of (i) and (ii), we have $Z(f)\cup Z(g)\neq X$ and $Z(f) \cap Z(g) = \emptyset$. \end{proof}

\end{document}